\newtheorem{theorem}{Theorem}[section]
\newtheorem{lemma}[theorem]{Lemma}
\newtheorem{proposition}[theorem]{Proposition}
\newtheorem{conjecture}[theorem]{Conjecture}
\theoremstyle{definition}
\newtheorem{definition}[theorem]{Definition}
\newtheorem{remark}[theorem]{Remark}
\numberwithin{equation}{section}
\newtheorem{example}[theorem]{Example}
\newtheorem{setting}[theorem]{Setting}
\begin{document}

\normalfont

\title{Topics on Geometric and Representation Theoretic Aspects of Period Rings I}
\author{Xin Tong}

\maketitle

\begin{abstract}
\rm We consider more general framework than the corresponding one considered in our previous work on the Hodge-Iwasawa theory. In our current consideration we consider the corresponding more general base spaces, namely the analytic adic spaces and analytic perfectoid spaces in Kedlaya's AWS Lecture notes. We hope our discussion will also shed some light on further generalization to even more general spaces such as those considered by Gabber-Ramero namely one just considers certain topological rings which satisfy the Fontaine-Wintenberger idempotent correspondence and calls them perfectoid generalizing the notions from Scholze, Fontaine, Kedlaya-Liu and Kedlaya (AWS Lecture notes). Actually some of the discussion we presented here is already in some more general form for this purpose (although we have not made enough efforts to write all the things). 
\end{abstract}

\footnotetext[1]{Version: Feb 28 2021.}
\footnotetext[2]{Keywords and Phrases: Analytic Geometry, Period Rings, Relative $p$-adic Hodge Structures, Deformation of Representations of Fundamental Groups.}

\newpage

\tableofcontents

\newpage

\section{Introduction}

\subsection{General Perfectoids and Preperfectoids}

\noindent Our previous work on Hodge-Iwasawa theory aimed at the corresponding deformation of the corresponding Hodge structure after \cite{KL1} and \cite{KL2}. The corresponding application in our mind targets the corresponding noncommutative Iwasawa theory and the corresponding $p$-adic local systems with general Banach coefficients. The corresponding background of the foundations on the analytic geometry comes from essentially \cite{KL1} where Banach analog of Fontaine's perfectoid spaces in the Tate case was defined and studied extensively. \\

\indent The corresponding analytic Huber analog of the main results of \cite{KL1} was initiated in \cite{Ked1} where the analytic Huber analog of perfectoids are defined, and the analytic Huber analog of the corresponding perfectoid correspondence is established and the corresponding descent for vector bundles and stably-pseudocoherent sheaves is established in the analytic topology.\\

\indent In our current development of the corresponding Hodge-Iwasawa theory we consider the corresponding generality at least parallel to \cite{Ked1}. We now consider the corresponding analytic analog of our previous consideration which generalized the corresponding \cite{KL1} and \cite{KL2} to the corresponding big coefficient situation. Without deformation in our sense \cite{KL1} and \cite{KL2} have already considered the corresponding free of trivial norm cases in certain context and considerations. \\

\indent Certainly inverting $p$ takes back us to the Tate situation, but what we know is that this is at least not included in our previous consideration, since the relative Robba rings are not the same (even in our current situation many are still Tate as their owns). So we have to basically establish the corresponding parallel theory, in order to treat the situation where the base spaces or rings are not Tate. And more importantly the integral Robba rings are not Tate, which needs to be discussed in order to apply to the situation where the base spaces or rings are not Tate. \\

\indent With the corresponding notations in Chapter 5, we have:

\begin{theorem}
Over $A$-relative preperfectoid Robba rings $\widetilde{\Pi}_{?,R,A}$ ($?=\empty,r,\infty,[r_1,r_2],\{[s,r]\}$, $0<r_1\leq r_2/p^{hk}$)  constructed over analytic base $(R,R^+)$, we have the equivalence among the categories of the Frobenius \'etale-stably-pseudocoherent modules, which could be further compared in equivalence to the the pseudocoherent sheaves over adic Fargues-Fontaine curves in both \'etale and pro-\'etale topology.	
\end{theorem}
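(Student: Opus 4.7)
The plan is to follow the same architecture as the Tate/Banach case in \cite{KL1,KL2} and its analytic-Huber refinement in \cite{Ked1}, but now in the $A$-relative setting built over a general analytic base $(R,R^+)$. First, I would fix the notation from Chapter 5 and organize the four flavours of Robba rings $\widetilde{\Pi}_{?,R,A}$ into a single picture: the full ring $\widetilde{\Pi}_{R,A}$, the truncated rings $\widetilde{\Pi}_{r,R,A}$ and $\widetilde{\Pi}_{[r_1,r_2],R,A}$, the limit $\widetilde{\Pi}_{\infty,R,A}$, and the covering $\{\widetilde{\Pi}_{[s,r],R,A}\}$. The point is that the latter gives a rational localization cover of $\widetilde{\Pi}_{r,R,A}$ in the sense of Kedlaya's analytic adic framework; the hypothesis $0<r_1\leq r_2/p^{hk}$ is precisely what is needed to make Frobenius pullback send one interval into the next so that the cover is Frobenius-stable.

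Second, I would establish the pseudocoherent/stably-pseudocoherent glueing along this cover. Here the core input is an acyclicity statement for the augmented \v Cech complex associated to $\{\widetilde{\Pi}_{[s,r],R,A}\}$ with coefficients in a stably-pseudocoherent module. In the Banach case this is \cite[Thm.~...]{KL2}; in the analytic case it is the main glueing theorem of \cite{Ked1}. The $A$-relative version should reduce to these by a completed base-change argument, using flatness of $\widehat{\otimes} A$ on the appropriate Banach/analytic categories, which was verified in our earlier Hodge-Iwasawa work for the cases we need. Once glueing is in place, one gets the equivalence between modules over $\widetilde{\Pi}_{r,R,A}$ and sheaves on the corresponding covering. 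Inserting Frobenius and using the relation $\varphi^{\ast}\widetilde{\Pi}_{[s,r],R,A}\simeq \widetilde{\Pi}_{[s/p,r/p],R,A}$ (which is where the constraint $r_1\leq r_2/p^{hk}$ is used) one then descends from $\widetilde{\Pi}_{r,R,A}$ to $\widetilde{\Pi}_{\infty,R,A}$ and in the limit to $\widetilde{\Pi}_{R,A}$, giving the equivalence among the four categories on the left side of the statement.

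Third, to compare with pseudocoherent sheaves on the adic Fargues--Fontaine curve, I would use the standard identification of the curve with $\operatorname{Proj}$ (resp.\ the analytic quotient) of Frobenius-graded pieces of a suitable $\widetilde{\Pi}_{?,R,A}$, so that a Frobenius-equivariant module becomes a sheaf on the curve after passing to the quotient by $\varphi$. In the étale topology this is direct from the glueing above combined with the \'etale descent for stably-pseudocoherent sheaves established in \cite{Ked1}; for the pro-\'etale topology one passes from \'etale to pro-\'etale by a standard cofinality/limit argument, using that the transition maps in a pro-\'etale presentation are flat on pseudocoherent modules.

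I expect the main obstacle to be the glueing step in the $A$-relative generality: verifying acyclicity of the \v Cech complex of stably-pseudocoherent modules along $\{\widetilde{\Pi}_{[s,r],R,A}\}$ when $(R,R^+)$ is only analytic (not Tate) and the coefficient ring $A$ is a general Banach ring. The Tate-type estimates used in \cite{KL1,KL2} do not transfer verbatim, and one must instead work with the norm structure provided by the analytic base as in \cite{Ked1}; checking that the $A$-relative completed tensor products preserve the required strictness and exactness properties is the technical heart of the argument, and everything else in the proof is formal consequence of it.
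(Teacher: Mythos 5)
Your proposal follows essentially the same route the paper takes: the paper's own ``proof'' is simply a citation of the parallel argument in Kedlaya--Liu \cite[Theorem 4.6.1]{KL2} and \cite[Theorem 4.11]{XT2}, and your sketch (Frobenius-stable covering by closed intervals using $0<r_1\leq r_2/p^{hk}$, glueing of stably-pseudocoherent modules along that cover, passage to the adic Fargues--Fontaine curve as the quotient by Frobenius, and the \'etale-to-pro-\'etale comparison) is precisely the architecture of that cited argument transported to the $A$-relative analytic setting. You also correctly identify the genuine technical heart --- acyclicity/glueing over a merely analytic (non-Tate) base with completed $A$-coefficients, which is where the paper's standing sheafiness hypothesis enters --- so your outline is, if anything, more explicit than the proof given in the paper.
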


\indent This is basically the corresponding commutative version where we encode the corresponding ring $A$ into the corresponding Huber spectrum in \cite{KL1} and \cite{KL2}. The corresponding noncommutative version is also true but we have to do this slightly different by deforming the structure sheaves.  With the corresponding notations again in Chapter 5, we have:

\begin{theorem}
Over $B$-relative preperfectoid Robba rings $\widetilde{\Pi}_{?,R,B}$ ($?=\empty,r,\infty,[r_1,r_2],\{[s,r]\}$, $0<r_1\leq r_2/p^{hk}$) constructed over analytic base $(R,R^+)$, we have the equivalence among the categories of the Frobenius $B$-\'etale-stably-pseudocoherent modules, which could be further compared in equivalence to the the $B$-pseudocoherent sheaves over adic Fargues-Fontaine curves in \'etale topology.	
\end{theorem}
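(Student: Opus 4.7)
The plan is to bootstrap from the preceding theorem (the commutative version) by regarding the noncommutative ring $B$ as providing a deformation of the structure sheaves on the various Robba spectra, rather than being absorbed into the Huber spectrum as was done with $A$. Following the setup of Chapter~5, the working principle is that a $B$-relative object over $\widetilde{\Pi}_{?,R,B}$ should, after forgetting the $B$-action, recover the corresponding object from the commutative theory applied to the trivial (constant) deformation; conversely, promoting commutative data to $B$-linear data is controlled by a cocycle condition intertwining the descent data with the $B$-action.

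First I would fix, for each ${?=\emptyset,r,\infty,[r_1,r_2],\{[s,r]\}}$, the category of Frobenius $B$-\'etale-stably-pseudocoherent modules over $\widetilde{\Pi}_{?,R,B}$, together with the natural base-change/restriction functors between them. I would check that these functors are exact, preserve stable pseudocoherence compatibly with the $B$-action, and commute with Frobenius. The pairwise equivalences among the various Robba-ring categories then reduce, via the deformation-gluing formalism of Chapter~5, to the commutative equivalences supplied by the preceding theorem: one runs the commutative argument for the underlying modules and then promotes the resulting equivalence to a $B$-linear equivalence by observing that the intertwiner is natural in $B$. This step is essentially formal once the deformation framework is in place.

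Next I would establish the comparison with $B$-pseudocoherent sheaves on the adic Fargues--Fontaine curve in the \'etale topology. The strategy is Frobenius-equivariant descent along the covering of the curve by Robba spectra: commutative \'etale descent for stably pseudocoherent modules, as developed in \cite{KL1}, \cite{KL2}, and \cite{Ked1}, gives the descent for the underlying modules, and one then verifies that the cocycle respects the $B$-action to upgrade it to a $B$-module sheaf. Gluing across the Frobenius orbit produces the desired $B$-pseudocoherent sheaf on the adic Fargues--Fontaine curve, and the equivalence with the Robba-ring side follows from the same Frobenius-equivariant gluing.

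The main obstacle is precisely this noncommutative \'etale descent for stably pseudocoherent modules: this is also why the statement is restricted to the \'etale topology and does not assert the pro-\'etale comparison available in the commutative case. One must verify that every commutative descent cocycle produced by the commutative theory is in fact $B$-linear, and that $B$-linear morphisms glue; in practice this requires a careful compatibility check between the cocycle and the action, together with a faithfully flat descent argument for $B$-modules along the \'etale covering. Once that compatibility is established, the remaining comparison with the Fargues--Fontaine curve is parallel to the commutative case and poses no additional difficulty.
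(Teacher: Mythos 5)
Your opening slogan --- treat $B$ as a deformation of the structure sheaf rather than absorbing it into the Huber spectrum --- is exactly the paper's stated strategy, and your observation that the comparison is asserted only in the \'etale topology (not pro-\'etale) because of the noncommutative descent issue is on target. But the mechanism you propose for exploiting that slogan has a genuine gap. You want to run the commutative theorem ``for the underlying modules'' after forgetting the $B$-action and then promote the resulting equivalences and descent cocycles to $B$-linear ones. This reduction does not exist: a Frobenius $B$-\'etale-stably-pseudocoherent module is by definition a (left) module over the deformed ring $\widetilde{\Pi}_{?,R,B}=\widetilde{\Pi}_{?,R}\widehat{\otimes}B$ whose pseudocoherence and stability are measured relative to that ring. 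Since $B$ is a general (possibly noncommutative) quotient of a free Tate algebra and is typically infinite-dimensional over the base field, forgetting the $B$-action does not land you in the category of \'etale-stably-pseudocoherent $\widetilde{\Pi}_{?,R}$-modules at all --- the underlying $\widetilde{\Pi}_{?,R}$-module need not be finitely generated. So there is no commutative object to which the preceding theorem applies, and no cocycle condition on top of commutative descent data that recovers the $B$-linear category.

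What the paper actually does is rerun the Kedlaya--Liu argument (\cite[Theorem 4.6.1]{KL2}, via \cite[Theorem 4.11]{XT2}) directly at the level of modules over the deformed rings: the $B$-modules are regarded as sheaves over deformed ringed sites as in \cite{XT3} and \cite{XT4}, the gluing sequences for the Robba rings over closed intervals remain exact after completed tensor with $B$, and Kedlaya's section-modification argument for retracts (cf.\ \cite[Proposition 5.11]{XT3}) is carried out for left modules over the noncommutative deformation; the passage among the rings indexed by $?=\emptyset,r,\infty,[r_1,r_2]$ and the comparison with the Fargues--Fontaine curve then proceed by Frobenius translation and binary gluing of intervals exactly as in the commutative case, but with the deformed structure sheaf throughout. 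To repair your argument you would need to replace ``commutative descent plus cocycle upgrade'' by this direct descent for $B$-pseudocoherent objects over the deformed site, which is precisely the nontrivial input imported from \cite{XT3} and \cite{XT4}.
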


\indent Our ultimate consideration will after \cite{GR}, where $(R,R^+)$ is not necessarily always assumed to be analytic, especially in the integral setting this is quite general. With the corresponding notations again in Chapter 6, we have:

\begin{theorem}
Over $A$-relative preperfectoid Robba rings $\widetilde{\Pi}_{?,R,A}$ ($?=[r_1,r_2],\{[s,r]\}$, $0<r_1\leq r_2/p^{hk}$) constructed over analytic base $(R,R^+)$, we have the equivalence of the categories of the Frobenius \'etale-stably-pseudocoherent modules, which could be further compared in equivalence to the the pseudocoherent sheaves over adic Fargues-Fontaine curves in the \'etale or pro-\'etale topology.	
\end{theorem}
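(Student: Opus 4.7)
The plan is to establish the equivalence separately for the two interval-type cases $?=[r_1,r_2]$ and $?=\{[s,r]\}$ that persist in the Chapter 6 framework, and then to match them against pseudocoherent sheaves on the adic Fargues-Fontaine curve. The reason only these interval rings appear, in contrast to Theorem 1.1, is that the Chapter 6 setting is designed to generalize toward the Gabber-Ramero framework, and in that generality the global Fr\'echet-type completions $\widetilde{\Pi}_{r,R,A}$ or $\widetilde{\Pi}_{\infty,R,A}$ are not available in a sufficiently functorial way, so one must work intervalwise and only then glue.

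First I would settle the single closed interval $\widetilde{\Pi}_{[r_1,r_2],R,A}$. The numerical hypothesis $0<r_1\leq r_2/p^{hk}$ guarantees that some iterate of Frobenius strictly contracts $[r_1,r_2]$ into itself, so that a $\varphi$-module with such a pullback isomorphism defines a descent datum cutting out a category equivalent to pseudocoherent sheaves on a compact piece of the relative adic Fargues-Fontaine curve. For this step I would invoke the analytic Huber descent for stably-pseudocoherent sheaves of \cite{Ked1}, applied to the analytic base $(R,R^+)$ with the Banach coefficient $A$ entering as a tensor factor to produce the preperfectoid version, thereby obtaining the \'etale-topological version. Next I would glue to the sheafified case $\widetilde{\Pi}_{\{[s,r]\},R,A}$ by covering $[s,r]$ with small nested sub-intervals $[r_1',r_2']$ still satisfying $r_1'\leq r_2'/p^{hk}$; the descent result in \cite{Ked1} promotes the single-interval equivalence to an equivalence over the sheafified ring compatibly with $\varphi$, since Frobenius acts uniformly on the nesting. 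Finally, to upgrade \'etale to pro-\'etale, I would piggy-back on the pro-\'etale descent for pseudocoherent sheaves in the perfectoid setting developed in \cite{Ked1} and \cite{KL2}: the preperfectoid structure on $\widetilde{\Pi}_{?,R,A}$ is precisely what permits iterating this descent through a pro-\'etale tower built from the analytic base $(R,R^+)$.

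The main obstacle, I expect, will be maintaining the \emph{stably-}pseudocoherent condition under the gluing from single intervals to the sheafified $\{[s,r]\}$ system. In the Chapter 6 framework, without the cushion of the global rings, stability cannot simply be read off from a convenient Fr\'echet limit argument; one must check directly that pseudocoherence is preserved along the transition maps $\widetilde{\Pi}_{[r_1,r_2],R,A}\to \widetilde{\Pi}_{[r_1',r_2'],R,A}$ and along pairwise intersections, and that this is compatible with the Frobenius isocrystal structure used to descend to the Fargues-Fontaine side. A secondary subtlety is ensuring that the comparison with the adic Fargues-Fontaine curve does not tacitly use $\widetilde{\Pi}_{\infty,R,A}$ on the target side, so that the $\varphi$-quotient is constructed purely from the interval data, matching the restricted set of ring decorations $?=[r_1,r_2],\{[s,r]\}$ allowed in the statement.
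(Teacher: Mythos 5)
Your outline follows essentially the same route as the paper, whose corresponding body theorems are proved by reduction to \cite[Theorem 4.6.1]{KL2} and \cite[Theorem 4.11]{XT2}: spread a module on a single interval $[r_1,r_2]$ with $0<r_1\leq r_2/p^{hk}$ to a $\varphi$-bundle by Frobenius translation together with binary glueing over overlapping intervals, with the \'etale-stably-pseudocoherent condition supplying exactly the glueability you flag as the main obstacle, and with the \'etale/pro-\'etale comparison handled by the descent results of \cite{Ked1} and \cite{KL2}. One small correction: the hypothesis does not make an iterate of Frobenius contract $[r_1,r_2]$ into itself --- rather it makes the translate $[r_1/p^{hk},r_2/p^{hk}]$ overlap $[r_1,r_2]$ in $[r_1,r_2/p^{hk}]$, so the pullback isomorphism becomes a glueing datum whose iterates tile $(0,\infty)$; your subsequent description of the descent datum presenting a compact piece of the adic Fargues-Fontaine curve is the correct picture.
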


\begin{theorem}
Over $B$-relative preperfectoid Robba rings $\widetilde{\Pi}_{?,R,B}$ ($?=[r_1,r_2],\{[s,r]\}$, $0<r_1\leq r_2/p^{hk}$) constructed over analytic base $(R,R^+)$, we have the equivalence of the categories of the Frobenius $B$-stably-pseudocoherent modules.	
\end{theorem}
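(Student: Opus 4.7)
The plan is to establish the claimed equivalence by a Kiehl-type descent from the rational cover $\{[s,r]\}$ to the ambient annulus $[r_1,r_2]$, performed in the noncommutative $B$-coefficient setting and respecting the Frobenius. The basic observation is that a Frobenius $B$-stably-pseudocoherent module over $\widetilde{\Pi}_{?,R,B}$ can be presented as a stably-pseudocoherent module over the underlying commutative Robba ring $\widetilde{\Pi}_{?,R}$ carrying a Frobenius-semilinear endomorphism together with a compatible commuting left action by $B$; as noted in the discussion preceding Theorem 1.2, this noncommutative setting is obtained from the commutative one by deforming the structure sheaf, so most of the core estimates can be imported from the commutative layer rather than reproved from scratch.

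First, I would introduce the restriction functor sending a Frobenius $B$-stably-pseudocoherent module $M$ over $\widetilde{\Pi}_{[r_1,r_2],R,B}$ to the compatible family $\{M \otimes_{\widetilde{\Pi}_{[r_1,r_2],R,B}} \widetilde{\Pi}_{[s,r],R,B}\}$ indexed by $[s,r] \subseteq [r_1,r_2]$, and verify that stable pseudocoherence, Frobenius-equivariance, and the $B$-action are preserved under this base change. In the opposite direction, I would construct a gluing functor taking a compatible family $\{M_{[s,r]}\}$ to the \v{C}ech equalizer of the corresponding double cover; the Frobenius and the $B$-action automatically descend to the equalizer because they commute with the restriction maps. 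To close the equivalence I would need to show that these two functors are quasi-inverse, which amounts to the Tate-acyclicity of the cover $\{\widetilde{\Pi}_{[s,r],R,B}\}$ with respect to Frobenius $B$-stably-pseudocoherent modules. The hypothesis $0 < r_1 \leq r_2/p^{hk}$ enters through the Frobenius identification used to reduce the range of intervals that must be controlled simultaneously.

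The main obstacle is the noncommutative Tate-acyclicity. In the commutative setting (Theorem 1.3) the argument proceeds via flatness of the rational localizations $\widetilde{\Pi}_{[r_1,r_2],R} \to \widetilde{\Pi}_{[s,r],R}$ together with a Banach-level Tate acyclicity; in the $B$-case the \v{C}ech tensor products are formed over noncommutative rings, so one must first establish two-sided flatness of the transition maps $\widetilde{\Pi}_{[s',r'],R,B} \to \widetilde{\Pi}_{[s,r],R,B}$, and then argue that the noncommutative \v{C}ech complex is obtained from its commutative counterpart by an appropriate completed tensor product with the $B$-coefficients, so that the commutative acyclicity already available transports to the noncommutative layer. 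The absence of the \'etale qualifier here (contrast Theorem 1.2) means no additional \'etale-descent is required, which is a genuine simplification and is presumably why the statement does not include a Fargues-Fontaine comparison: the goal is exactly the rational descent among the Robba intervals, not the finer pro-\'etale identification of the commutative $A$-case.
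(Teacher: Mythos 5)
Your proposal is correct and follows essentially the same route as the paper: the paper's own ``proof'' is a citation to Kedlaya--Liu (Theorem 4.6.1 of \cite{KL2}) and to \cite[Theorem 4.11]{XT2}, together with the remark that the $B$-modules are to be regarded as sheaves over the deformed site of \cite{XT3}, \cite{XT4} --- i.e.\ exactly the binary interval glueing, Frobenius translation via $0<r_1\leq r_2/p^{hk}$, and transport of the commutative Tate-acyclicity through the completed tensor with $B$ that you spell out. Your observation that the missing \'etale qualifier and Fargues--Fontaine comparison make this a pure rational-descent statement also matches the paper's intent.
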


\subsection{Some Consideration in the Future}

More thorough applications of \cite{Lu2} will be expected in our study on the corresponding derived algebraic geometry of the corresponding period rings\footnote{The original motivation comes from the study of the algebraic pseudocoherent sheaves over algebraic Fargues-Fontaine curves originally in \cite{KL2}, and also in \cite{XT1} and \cite{XT2} where the corresponding functional analytic information around locally convex vector spaces (after Bourbaki) is not considered.}. We only touched this to some very transparent extent which will be somehow reflecting some further consideration along the corresponding applications in our mind.\\

New robust developments from Clausen-Scholze \cite{CS}, Bambozzi-Ben-Bassat-Kremnizer \cite{BBBK} and Bambozzi-Kremnizer \cite{BK1} may shed lights on the derived Galois (possibly in more general setting for fundamental groups) deformation theory (and some unobstructed crystalline substacks) such as in \cite{GV} and possibly (we conjecture) the corresponding possible theory of some derived eigenvarieties and derived Selmer varieties in Kim's nonabelian Chabauty. We hope to give more hard thorough study and understanding around this. Especially in our situation we will come across many technical issues where these new developments could help.

\


\section{Period Rings in General Setting with General Coefficients}

\noindent In this section we start by defining Kedlaya-Liu's style period rings with coefficients in Tate adic Banach rings which are of finite type.

\begin{setting}
We consider the corresponding setting up which is as in the following. First we consider a field $E$ which is analytic nonarchimedean with normalized norm such that the corresponding uniformizer $\pi_E$ is of norm $1/p$. And we assume that the corresponding residue field of the field $E$ takes the form of $\mathbb{F}_{p^h}$. Then we fix a corresponding uniform adic Banach algebra $(R,R^+)$ over $\mathbb{F}_{p^h}$ which is not required to be contain topological nilpotent unit. However we need to assume that that this is analytic in the sense of \cite{KL1} and \cite[Definition 1.1.2]{Ked1}. We recall that this means the set of the corresponding topological nilpotents generates the unit ideal. And let $A$ be a general rigid analytic affinoid over $\mathbb{Q}_p$ or over $\mathbb{F}_p((t))$.
\end{setting}

\indent Then in this generality we define our large coefficient Robba rings following Kedlaya-Liu in the Tate algebra situation namely we have $A=\mathbb{Q}_p\{X_1,...,X_d\}$ or $A=\mathbb{F}_p((t))\{X_1,...,X_d\}$:


\begin{definition}\mbox{\bf{(After Kedlaya-Liu \cite[Definition 4.1.1]{KL2})}} Now consider the following constructions. First we consider the corresponding Witt vectors coming from the corresponding adic ring $(R,R^+)$. First we consider the corresponding generalized Witt vectors with respect to $(R,R^+)$ with the corresponding coefficients in the Tate algebra with the general notation $W(R^+)[[R]]$. The general form of any element in such deformed ring could be written as $\sum_{i\geq 0,i_1\geq 0,...,i_d\geq 0}\pi^i[\overline{y}_i]X_1^{i_1}...X_d^{i_d}$. Then we take the corresponding completion with respect to the following norm for some radius $t>0$:
\begin{align}
\|.\|_{t,A}(\sum_{i\geq 0,i_1\geq 0,...,i_d\geq 0}\pi^i[\overline{y}_i]X_1^{i_1}...X_d^{i_d}):= \max_{i\geq 0,i_1\geq 0,...,i_d\geq 0}p^{-i}\|.\|_R(\overline{y}_i)	
\end{align}
which will give us the corresponding ring $\widetilde{\Pi}_{\mathrm{int},t,R,A}$ such that we could put furthermore that:
\begin{align}
\widetilde{\Pi}_{\mathrm{int},R,A}:=\bigcup_{t>0} \widetilde{\Pi}_{\mathrm{int},t,R,A}.	
\end{align}
Then as in \cite[Definition 4.1.1]{KL2}, we now put the ring $\widetilde{\Pi}_{\mathrm{bd},t,R,A}:=\widetilde{\Pi}_{\mathrm{int},t,R,A}[1/\pi]$ and we set:
\begin{align}
\widetilde{\Pi}_{\mathrm{bd},R,A}:=\bigcup_{t>0} \widetilde{\Pi}_{\mathrm{bd},t,R,A}.	
\end{align}
The corresponding Robba rings with respect to some intervals and some radius could be defined in the same way as in \cite[Definition 4.1.1]{KL2}. To be more precise we consider the completion of the corresponding ring $W(R^+)[[R]][1/\pi]$ with respect to the following norm for some $t>0$ where $t$ lives in some prescribed interval $I=[s,r]$: 
\begin{align}
\|.\|_{t,A}(\sum_{i,i_1\geq 0,...,i_d\geq 0}\pi^i[\overline{y}_i]X_1^{i_1}...X_d^{i_d}):= \max_{i\geq 0,i_1\geq 0,...,i_d\geq 0}p^{-i}\|.\|_R(\overline{y}_i).	
\end{align}
This process will produce the corresponding Robba rings with respect to  the given interval $I=[s,r]$. Now for particular sorts of intervals $(0,r]$ we will have the corresponding Robba ring $\widetilde{\Pi}_{r,R,A}$ and we will have the corresponding Robba ring $\widetilde{\Pi}_{\infty,R,A}$	if the corresponding interval is taken to be $(0,\infty)$. Then in our situation we could just take the corresponding union throughout all the radius $r>0$ to define the corresponding full Robba ring taking the notation of $\widetilde{\Pi}_{R,A}$.
\end{definition}

\begin{remark}
The corresponding Robba rings $\widetilde{\Pi}_{\mathrm{bd},R,A}$, $\widetilde{\Pi}_{R,A}$, $\widetilde{\Pi}_{I,R,A}$, $\widetilde{\Pi}_{r,R,A}$, $\widetilde{\Pi}_{\infty,R,A}$ are actually themselves Tate adic Banach rings. However in many further application the non-Tateness of the ring $R$ will cause some reason for us to do the corresponding modification, which is considered on this level in fact in \cite{KL1} in the context therein.	
\end{remark}

\begin{definition}
Then for any general affinoid algebra $A$ over the corresponding base analytic field, we just take the corresponding quotients of the corresponding rings defined in the previous definition over some Tate algebras in rigid analytic geometry, with the same notations though $A$ now is more general. Note that one can actually show that the definition does not depend on the corresponding choice of the corresponding presentations over $A$.
\end{definition}

\begin{remark}
Again in this situation more generally, the corresponding Robba rings $\widetilde{\Pi}_{\mathrm{bd},R,A}$, $\widetilde{\Pi}_{R,A}$, $\widetilde{\Pi}_{I,R,A}$, $\widetilde{\Pi}_{r,R,A}$, $\widetilde{\Pi}_{\infty,R,A}$ are actually themselves Tate adic Banach rings.	
\end{remark}

\indent Note that we can also as in the situation of \cite{KL2} and \cite{XT2} consider the corresponding the corresponding property checking of the corresponding period rings defined aboave. We collect the corresponding statements here while the the proof could be found in \cite{XT2}:

\begin{lemma} \mbox{\bf{(After Kedlaya-Liu \cite[Lemma 5.2.6]{KL2})}}
For any two radii $0<r_1<r_2$ we have the corresponding equality:
\begin{align}
\widetilde{\Pi}_{\mathrm{int},r_2,R,\mathbb{Q}_p\{T_1,...,T_d\}}\bigcap \widetilde{\Pi}_{[r_1,r_2],R,\mathbb{Q}_p\{T_1,...,T_d\}}	=\widetilde{\Pi}_{\mathrm{int},r_1,R,\mathbb{Q}_p\{T_1,...,T_d\}}.
\end{align}

\end{lemma}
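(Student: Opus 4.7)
The plan is to adapt the strategy of \cite[Lemma 5.2.6]{KL2} to the present large-coefficient Tate algebra setting. The idea is that an element in the intersection admits a Witt-vector-style expansion simultaneously controlled by the $r_2$-integral norm and by the interval $[r_1, r_2]$ Gauss norms; a unique-representation result and a comparison of these two sets of conditions will then force integrality at $r_1$.

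First I would invoke the unique representation statement (in the form already established in \cite{XT2}) that every element $x$ of $\widetilde{\Pi}_{\mathrm{int}, t, R, A}$ may be written uniquely as
\[
x = \sum_{i \geq 0,\; i_1, \ldots, i_d \geq 0} \pi^i [\overline{y}_{i; i_1, \ldots, i_d}] X_1^{i_1} \cdots X_d^{i_d},
\]
with each $\overline{y}_{i; \vec{\imath}} \in R^+$ and with the corresponding $t$-Gauss terms $p^{-i}\|\overline{y}_{i; \vec{\imath}}\|_R^{\,t}$ tending to zero as $i + |\vec{\imath}| \to \infty$. A parallel statement holds for the interval Robba ring $\widetilde{\Pi}_{[r_1, r_2], R, A}$, where the coefficients are allowed to lie in $R$ (rather than $R^+$) and the analogous decay must hold at both endpoints $t = r_1$ and $t = r_2$ (equivalently, throughout the interval, by log-affineness of $t \mapsto \log(p^{-i} \|\overline{y}_{i; \vec{\imath}}\|_R^{\,t})$ on each Teichm\"uller term).

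Second, given $x$ in the intersection, uniqueness forces the two expansions, one coming from each of the containing rings, to coincide. From membership in $\widetilde{\Pi}_{\mathrm{int}, r_2, R, A}$ we simultaneously extract integrality of the coefficients $\overline{y}_{i; \vec{\imath}} \in R^+$ and decay of $p^{-i}\|\overline{y}_{i; \vec{\imath}}\|_R^{\,r_2}$ to zero, while from membership in $\widetilde{\Pi}_{[r_1, r_2], R, A}$ we additionally extract decay of $p^{-i}\|\overline{y}_{i; \vec{\imath}}\|_R^{\,r_1}$ to zero. Combining these three conditions is precisely the definition of $\widetilde{\Pi}_{\mathrm{int}, r_1, R, A}$, giving the nontrivial inclusion. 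The reverse containment is immediate: once $\|\overline{y}_{i; \vec{\imath}}\|_R \leq 1$, the inequality $r_1 \leq r_2$ yields $\|\overline{y}_{i; \vec{\imath}}\|_R^{\,r_2} \leq \|\overline{y}_{i; \vec{\imath}}\|_R^{\,r_1}$, so decay at $r_1$ forces decay at $r_2$ and throughout $[r_1, r_2]$.

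The main obstacle I anticipate lies in the unique-representation step for Witt-vector style expansions with Tate-algebra coefficients. The variables $X_1, \ldots, X_d$ are central with Gauss norm equal to one, so they do not interact analytically with the Witt vector structure, but one still has to verify that the uniqueness of Teichm\"uller expansion over $W(R^+)$ passes cleanly through the completed tensor product with $A$ in both the integral and the interval norms. Once this bookkeeping is under control, the remainder of the argument is simply the matching of convergence conditions sketched above.
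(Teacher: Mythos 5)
Your argument is essentially the one the paper relies on: the paper's own ``proof'' is only a pointer to \cite[Lemma 5.2.6]{KL2} and \cite[Proposition 2.13]{XT2}, and the strategy there is exactly your combination of unique Teichm\"uller-type expansions with a term-by-term comparison of the Gauss norms $p^{-i}\|\overline{y}_{i;\vec{\imath}}\|_R^{\,t}$ at the endpoints $t=r_1,r_2$ (note that you have correctly restored the exponent $t$, which is dropped in the paper's displayed norm formula). The only points to make explicit are the ones you already flag: that the intersection is taken inside a common overring such as $\widetilde{\Pi}_{[r_2,r_2],R,\mathbb{Q}_p\{T_1,\ldots,T_d\}}$ where uniqueness of the expansion is available, and that membership in the integral ring does give coefficients of norm at most $1$, which is what makes the reverse containment immediate.
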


\begin{proof}
See \cite[Lemma 5.2.6]{KL2} and \cite[Proposition 2.13]{XT2}.	
\end{proof}

\begin{lemma} \mbox{\bf{(After Kedlaya-Liu \cite[Lemma 5.2.6]{KL2})}}
For any two radii $0<r_1<r_2$ we have the corresponding equality:
\begin{align}
\widetilde{\Pi}_{\mathrm{int},r_2,R,\mathbb{F}_p((t))\{T_1,...,T_d\}}\bigcap \widetilde{\Pi}_{[r_1,r_2],R,\mathbb{F}_p((t))\{T_1,...,T_d\}}	=\widetilde{\Pi}_{\mathrm{int},r_1,R,\mathbb{F}_p((t))\{T_1,...,T_d\}}.
\end{align}

\end{lemma}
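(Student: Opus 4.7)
The plan is to reduce the statement to the preceding lemma by noting that the argument given in \cite[Lemma 5.2.6]{KL2} and \cite[Proposition 2.13]{XT2} uses only formal properties of the Witt-vector expansion and of the Gauss-type norms $\|.\|_{t,A}$, neither of which is sensitive to whether the coefficient field is $\mathbb{Q}_p$ or $\mathbb{F}_p((t))$. The argument therefore applies verbatim once $\mathbb{Q}_p\{T_1,\dots,T_d\}$ is replaced by $\mathbb{F}_p((t))\{T_1,\dots,T_d\}$ throughout.

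Concretely, first I would take any $x$ in the intersection and write it in its unique Witt normal form
\[
x = \sum_{i\geq 0,\, \underline{i}\geq 0}\pi^{i}[\overline{y}_{i,\underline{i}}]T_1^{i_1}\cdots T_d^{i_d}, \qquad \overline{y}_{i,\underline{i}}\in R.
\]
The hypothesis $x \in \widetilde{\Pi}_{\mathrm{int},r_2,R,\mathbb{F}_p((t))\{T_1,\dots,T_d\}}$ forces every Teichm\"uller coefficient to lie in $R^+$ rather than merely in $R$, and yields a finite value of the norm at radius $r_2$. The hypothesis $x \in \widetilde{\Pi}_{[r_1,r_2],R,\mathbb{F}_p((t))\{T_1,\dots,T_d\}}$ yields a finite value of the norm uniformly over the interval $[r_1,r_2]$. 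Exploiting the standard log-affine monotonicity of $t \mapsto \|x\|_{t,A}$ as a function of the radius, as in the cited references, I would then combine these two bounds to conclude that $\|x\|_{r_1,A}$ is finite, and hence $x \in \widetilde{\Pi}_{\mathrm{int},r_1,R,\mathbb{F}_p((t))\{T_1,\dots,T_d\}}$. The reverse inclusion is immediate from the definitions since any element integral at radius $r_1$ is automatically integral at the larger radius $r_2$ and bounded over $[r_1,r_2]$.

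The only real obstacle is verifying that the Witt-vector expansion and its uniqueness remain valid when the coefficient Tate algebra is itself of characteristic $p$. Since the Teichm\"uller lifts are still taken into the mixed-characteristic Witt ring $W(R^+)$, which is then deformed by adjoining the commuting formal variables $T_1,\dots,T_d$ over the coefficient field, no interference with the characteristic of the coefficient field occurs, and the normal form is unique by the same inductive lifting used in the construction of $\widetilde{\Pi}_{\mathrm{int},t,R,A}$. Granting this, the proof collapses to an elementary manipulation of the maxima $p^{-i}\|\overline{y}_{i,\underline{i}}\|_R$ and proceeds exactly as in \cite[Proposition 2.13]{XT2}.
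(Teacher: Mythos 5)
Your overall route---reduce to the Teichm\"uller/Witt normal form, observe that the Gauss-type norms $\|\cdot\|_{t,A}$ are insensitive to whether the coefficient Tate algebra lives over $\mathbb{Q}_p$ or $\mathbb{F}_p((t))$, and then combine finiteness bounds at different radii using the log-convexity of $t\mapsto\|x\|_{t,A}$---is exactly the argument of the sources the paper points to; the paper itself offers no proof beyond the citation of \cite[Lemma 5.2.6]{KL2} and \cite[Proposition 2.13]{XT2}, so in that sense you are on the intended track. However, there are two concrete problems. First, membership in $\widetilde{\Pi}_{\mathrm{int},r_2,R,A}$ does \emph{not} force the Teichm\"uller coefficients $\overline{y}_{i,\underline{i}}$ to lie in $R^+$: the ``$\mathrm{int}$'' condition is the absence of negative powers of $\pi$ together with finiteness of the radius-$t$ norm, and the coefficients are only required to lie in $R$. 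This claim is not load-bearing for you, but it signals a mismatch with the definitions.

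The second problem is load-bearing. For an integral element $x=\sum_{i\geq 0}\pi^i[\overline{y}_i]\cdots$ the quantity $p^{-i}\|\overline{y}_i\|_R^{\,t}$ is, termwise, controlled at every $t<r$ once it is controlled at $t=r$ (write $p^{-i}\|\overline{y}_i\|^t=(p^{-i})^{1-t/r}(p^{-i}\|\overline{y}_i\|^r)^{t/r}$), so convergence at the \emph{larger} radius is the stronger condition and one has $\widetilde{\Pi}_{\mathrm{int},r_2,R,A}\subseteq\widetilde{\Pi}_{\mathrm{int},r_1,R,A}$ for $r_1<r_2$. Consequently the inclusion you dismiss as ``immediate'' rests on the false implication that integrality at radius $r_1$ gives integrality at radius $r_2$, while the inclusion you labor over is the one that is essentially formal. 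Indeed, with the inclusions oriented this way the displayed identity cannot hold as printed unless $\widetilde{\Pi}_{\mathrm{int},r_1,R,A}=\widetilde{\Pi}_{\mathrm{int},r_2,R,A}$; the statement in \cite[Lemma 5.2.6]{KL2} has the two radii in the opposite roles, namely the integral ring at the larger radius equals the intersection of the integral ring at the smaller radius with the ring over the interval. Your argument needs to be reoriented accordingly: the genuine content is that an element which is integral at $r_1$ \emph{and} lies in $\widetilde{\Pi}_{[r_1,r_2],R,A}$ has a single well-defined normal form (this is where uniqueness of the expansion in the interval ring matters), has no negative powers of $\pi$, and satisfies the norm bound at $r_2$ by interpolating between the two given bounds. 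Until the directions of the containments are sorted out against the actual definitions, the proof as written does not establish the nontrivial inclusion.
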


\begin{proof}
See \cite[Lemma 5.2.6]{KL2} and \cite[Proposition 2.13]{XT2}.	
\end{proof}

\begin{lemma} \mbox{\bf{(After Kedlaya-Liu \cite[Lemma 5.2.6]{KL2})}}
For general affinoid $A$ as above (over $\mathbb{Q}_p$ or $\mathbb{F}_p((t))$) and for any two radii $0<r_1<r_2$ we have the corresponding equality:
\begin{align}
\widetilde{\Pi}_{\mathrm{int},r_2,R,A}\bigcap \widetilde{\Pi}_{[r_1,r_2],R,A}	=\widetilde{\Pi}_{\mathrm{int},r_1,R,A}.
\end{align}

\end{lemma}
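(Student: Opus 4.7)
The plan is to reduce to the two preceding lemmas by exploiting the quotient presentation of the affinoid algebra $A$. By definition, any general affinoid $A$ over $\mathbb{Q}_p$ (resp.\ over $\mathbb{F}_p((t))$) is presented as a quotient $A=T/I$ where $T$ denotes the ambient Tate algebra $\mathbb{Q}_p\{T_1,\ldots,T_d\}$ (resp.\ $\mathbb{F}_p((t))\{T_1,\ldots,T_d\}$) and $I$ is a finitely generated closed ideal, and the rings $\widetilde{\Pi}_{?,R,A}$ are by construction the quotients of the corresponding Tate-algebra rings $\widetilde{\Pi}_{?,R,T}$ by the closed ideals $J_?$ generated by the image of $I$. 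The easy containment $\widetilde{\Pi}_{\mathrm{int},r_1,R,A}\subset \widetilde{\Pi}_{\mathrm{int},r_2,R,A}\cap \widetilde{\Pi}_{[r_1,r_2],R,A}$ descends from the corresponding inclusions over $T$, and is independent of the presentation by the remark in the definition.

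For the nontrivial inclusion, my approach is to show that the intersection is compatible with passing to the quotient; explicitly, that every $f\in \widetilde{\Pi}_{\mathrm{int},r_2,R,A}\cap \widetilde{\Pi}_{[r_1,r_2],R,A}$ admits a simultaneous lift $\tilde f\in \widetilde{\Pi}_{\mathrm{int},r_2,R,T}\cap \widetilde{\Pi}_{[r_1,r_2],R,T}$. Once such a simultaneous lift is produced, the already-proved Tate algebra versions of the lemma (the two cases $A=\mathbb{Q}_p\{T_1,\ldots,T_d\}$ and $A=\mathbb{F}_p((t))\{T_1,\ldots,T_d\}$) immediately place $\tilde f$ into $\widetilde{\Pi}_{\mathrm{int},r_1,R,T}$, and then descending modulo $J_{\mathrm{int},r_1}$ produces the desired element $f\in \widetilde{\Pi}_{\mathrm{int},r_1,R,A}$.

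To construct the simultaneous lift, first choose any integral lift $\tilde f_0\in \widetilde{\Pi}_{\mathrm{int},r_2,R,T}$ of $f$ and any interval lift $\tilde g\in \widetilde{\Pi}_{[r_1,r_2],R,T}$ of $f$. By construction the difference $\tilde f_0-\tilde g$ lies in the closed ideal $J_{[r_1,r_2]}$, which is the $\|\cdot\|_{[r_1,r_2],T}$-completion of $I\cdot \widetilde{\Pi}_{[r_1,r_2],R,T}$. Using the explicit description of the norms $\|\cdot\|_{t,T}$ on power-series of the form $\sum \pi^i[\overline y_i]X_1^{i_1}\cdots X_d^{i_d}$, one checks that any element of $J_{[r_1,r_2]}$ whose interval reduction equals that of an element of $\widetilde{\Pi}_{\mathrm{int},r_2,R,T}$ can be realized as the image of an element of $J_{\mathrm{int},r_2}$. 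One then corrects $\tilde f_0$ by that element to obtain a simultaneous lift $\tilde f\in \widetilde{\Pi}_{\mathrm{int},r_2,R,T}\cap \widetilde{\Pi}_{[r_1,r_2],R,T}$.

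The main obstacle is the last correction step, which amounts to a strictness (open-mapping style) statement for the quotient maps $\widetilde{\Pi}_{?,R,T}\twoheadrightarrow \widetilde{\Pi}_{?,R,A}$ with $?$ ranging over the three relevant decorations, compatibly with the norms defining the completions. In practice I would resolve this in one of two equivalent ways: either (i) establish the strictness via a direct inspection of generators of $I$, using that $T$ is Noetherian so that $I$ is closed and finitely generated, thereby controlling $J_?$ coefficient-by-coefficient; or (ii) bypass the detour through $T$ and repeat the coefficient-by-coefficient approximation of \cite[Proposition 2.13]{XT2} directly with $A$-valued coefficients, observing that the only property of the coefficient algebra used in that argument is that it is a Banach algebra with a submultiplicative norm, which any general affinoid $A$ satisfies. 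Either route reduces the statement to the Tate algebra cases already established.
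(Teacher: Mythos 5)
The paper itself offers no argument for this lemma: its ``proof'' is a pointer to \cite[Lemma 5.2.6]{KL2} for the undeformed statement and to \cite[Proposition 2.14]{XT2} for the general-affinoid coefficient version. Of your two proposed routes, route (ii) --- rerunning the coefficient-by-coefficient estimate of \cite[Proposition 2.13]{XT2} directly with $A$-valued Teichm\"uller coefficients, using only that $A$ is Banach with submultiplicative norm --- is the one that matches the spirit of the cited source, and it is the route I would trust: the KL2-style argument really only manipulates the quantities $p^{-i}\|\overline{y}_i\|^t$ and is insensitive to which Banach algebra the auxiliary coefficients live in. Had you committed to that route and carried out the estimate, the proof would be essentially the intended one.

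Route (i), which you present as the main line, has a genuine gap at the simultaneous-lift step, and the gap is not cosmetic. First, the correction you need --- that an element of $J_{[r_1,r_2]}$ arising as the difference of an integral lift and an interval lift can be absorbed into $J_{\mathrm{int},r_2}$ --- is precisely the statement that the kernel ideals satisfy $J_{[r_1,r_2]}\cap \widetilde{\Pi}_{\mathrm{int},r_2,R,T}=J_{\mathrm{int},r_2}$ (intersections taken in the common overring in which the lemma's intersection is formed). That is an intersection statement of exactly the same analytic flavor as the lemma itself, now for the ideal rather than the whole ring, so invoking it as ``one checks'' is circular in difficulty: you have traded the lemma for an equally hard strictness claim rather than reduced it to the already-proved Tate-algebra cases. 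Second, even the preliminary claim that $J_?$ is the closure of $I\cdot\widetilde{\Pi}_{?,R,T}$ and that this closure is controlled ``coefficient-by-coefficient'' by finitely many generators of $I$ requires an open-mapping/strictness input for three different norms simultaneously (the $r_1$-norm, the $r_2$-norm, and the $[r_1,r_2]$-sup norm), and Noetherianity of $T$ alone does not give you uniformity of the implied constants across these norms. Since the paper's definition of $\widetilde{\Pi}_{?,R,A}$ explicitly goes through a presentation of $A$ and asserts independence of the presentation, these strictness issues are exactly the subtle content being swept under the rug; your write-up correctly locates the obstacle but does not overcome it. The fix is to abandon the lifting detour and prove the statement intrinsically over $A$ as in route (ii).
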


\begin{proof}
See \cite[Lemma 5.2.6]{KL2} and \cite[Proposition 2.14]{XT2}.	
\end{proof}

\begin{lemma} \mbox{\bf{(After Kedlaya-Liu \cite[Lemma 5.2.10]{KL2})}}
For any four radii $0<r_1<r_2<r_3<r_4$ we have the corresponding equality:
\begin{align}
\widetilde{\Pi}_{[r_1,r_3],R,\mathbb{Q}_p\{T_1,...,T_d\}}\bigcap \widetilde{\Pi}_{[r_2,r_4],R,\mathbb{Q}_p\{T_1,...,T_d\}}	=\widetilde{\Pi}_{[r_1,r_4],R,\mathbb{Q}_p\{T_1,...,T_d\}}.
\end{align}

\end{lemma}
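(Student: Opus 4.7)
The plan is to mirror the proof of \cite[Lemma 5.2.10]{KL2}, observing that the coefficient extension from the base field to the Tate algebra $A = \mathbb{Q}_p\{T_1,\ldots,T_d\}$ is essentially formal: adjoining polynomial variables $X_1,\ldots,X_d$ to the Witt vector construction, every estimate in loc.\ cit.\ carries over monomial-by-monomial in the $\underline{X}^{\underline{i_*}}$-expansion. The previous lemma in the excerpt (the triple intersection $\widetilde{\Pi}_{\mathrm{int},r_2}\cap \widetilde{\Pi}_{[r_1,r_2]} = \widetilde{\Pi}_{\mathrm{int},r_1}$) already confirms that this transfer works cleanly over $A$, so the same mechanism should apply here.

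For the trivial inclusion $\widetilde{\Pi}_{[r_1,r_4],R,A} \subseteq \widetilde{\Pi}_{[r_1,r_3],R,A} \cap \widetilde{\Pi}_{[r_2,r_4],R,A}$, I would invoke the log-convexity of $t \mapsto \log \|\cdot\|_{t,A}$ on $(0,\infty)$: since $r_3 \in [r_1,r_4]$ and $r_2 \in [r_1,r_4]$, both $\|\cdot\|_{r_3,A}$ and $\|\cdot\|_{r_2,A}$ are bounded by $\max(\|\cdot\|_{r_1,A},\|\cdot\|_{r_4,A})$. Hence the $[r_1,r_4]$-gauge dominates both the $[r_1,r_3]$- and the $[r_2,r_4]$-gauges, giving continuous injections of the respective completions into the common ambient ring $\widetilde{\Pi}_{[r_2,r_3],R,A}$.

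For the nontrivial inclusion, given $x$ in the intersection I would pick approximating polynomials $x_n,y_n \in W(R^+)[[R]][1/\pi]$ with $\|x - x_n\|_{[r_1,r_3],A}\to 0$ and $\|x - y_n\|_{[r_2,r_4],A}\to 0$, so that $x_n - y_n \to 0$ in $\|\cdot\|_{[r_2,r_3],A}$. The key step, following the template of \cite[Lemma 5.2.10]{KL2}, is to produce a \emph{single} approximating sequence $z_n$ by assembling coherent Teichmüller truncations: each element of $W(R^+)[[R]][1/\pi]$ has, modulo $\pi^N$, a canonical finite Teichmüller expansion $\sum \pi^i [\overline{z}_i]\underline{X}^{\underline{i_*}}$, and one selects at each stage a common truncation whose $[r_1,r_3]$- and $[r_2,r_4]$-discrepancies are simultaneously small. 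Because $r_2 \leq r_3$ forces $[r_1,r_3]\cup [r_2,r_4] = [r_1,r_4]$, the endpoint norms $\|\cdot\|_{r_1,A}$ and $\|\cdot\|_{r_4,A}$ are controlled, and the pieced-together sequence $z_n$ is Cauchy in $\|\cdot\|_{[r_1,r_4],A}$, giving $x \in \widetilde{\Pi}_{[r_1,r_4],R,A}$.

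The main obstacle will be making the ``coherent truncation'' step precise, since Teichmüller expansions in the deformed ring $W(R^+)[[R]][[X_1,\ldots,X_d]]$ are canonical only modulo powers of $\pi$. I would address this by running the argument level-wise modulo $\pi^N$ (where the expansion is canonical and the estimates reduce to the scalar situation of \cite{KL2} coefficient-wise in $\underline{X}^{\underline{i_*}}$), then passing to the $\pi$-adic limit. Everything else is a bookkeeping exercise: the polynomial variables $\underline{X}$ contribute nothing beyond indexing, and the analytic structure of $A$ intervenes only through the quotient step, handled exactly as in \cite[Proposition 2.14]{XT2}.
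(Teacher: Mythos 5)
Your overall skeleton (easy inclusion from log-convexity of $t\mapsto\log\|\cdot\|_{t,A}$, hard inclusion by analyzing Teichm\"uller expansions termwise, with the variables $X_1,\dots,X_d$ and the quotient to general $A$ contributing only bookkeeping) is the right one, and it is exactly the template of \cite[Lemma 5.2.10]{KL2} to which the paper defers --- note that the paper's own ``proof'' is nothing more than the citation of \cite[Lemma 5.2.10]{KL2} and \cite[Proposition 2.16]{XT2}, so there is no in-text argument to compare against. The easy inclusion as you state it is fine.

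There is, however, a genuine gap in your treatment of the hard inclusion, concentrated in the ``coherent truncation'' step that you yourself flag. First, the repair you propose --- running the argument levelwise modulo $\pi^N$ and then ``passing to the $\pi$-adic limit'' --- cannot work as stated: $\widetilde{\Pi}_{[s,r],R,A}$ is by definition a completion of $W(R^+)[[R]][1/\pi]$ for the norm $\max(\|\cdot\|_{s,A},\|\cdot\|_{r,A})$, so $\pi$ is a unit there and the relevant topology is the Banach one, not the $\pi$-adic one; reduction mod $\pi^N$ gives you no control on the $t$-norms, which is what membership in $\widetilde{\Pi}_{[r_1,r_4],R,A}$ requires. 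Second, the difficulty you defer --- that controlling $\|x_n\|_{r_1,A}$ and $\|y_n\|_{r_4,A}$ for two \emph{different} sequences does not produce one sequence controlled at both endpoints --- is precisely the content of the lemma, so leaving it as an ``assembly'' heuristic leaves the proof incomplete. The fix is to avoid the two-sequence gymnastics altogether: since $R$ is perfect, every element of each completed ring $\widetilde{\Pi}_{I,R,A}$ has a \emph{unique} convergent expansion $\sum_{i,\underline{i}}\pi^i[\overline{z}_{i,\underline{i}}]\underline{X}^{\underline{i}}$ (canonical on the nose, not merely modulo $\pi^N$), and every norm $\|\cdot\|_{t,A}$ is computed termwise on it. Applying this to $x$ inside the common ambient ring $\widetilde{\Pi}_{[r_2,r_3],R,A}$ identifies the expansions coming from $\widetilde{\Pi}_{[r_1,r_3],R,A}$ and from $\widetilde{\Pi}_{[r_2,r_4],R,A}$; the first gives the termwise bound at $t=r_1$, the second at $t=r_4$, and log-linearity of each term in $t$ then gives convergence of the same expansion in $\widetilde{\Pi}_{[r_1,r_4],R,A}$. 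With that substitution your argument closes up, and the passage to general affinoid $A$ by strict quotient is as in \cite[Proposition 2.16]{XT2}, as you indicate.
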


\begin{proof}
See \cite[Lemma 5.2.10]{KL2} and \cite[Proposition 2.16]{XT2}.	
\end{proof}

\begin{lemma} \mbox{\bf{(After Kedlaya-Liu \cite[Lemma 5.2.10]{KL2})}}
For any four radii $0<r_1<r_2<r_3<r_4$ we have the corresponding equality:
\begin{align}
\widetilde{\Pi}_{[r_1,r_3],R,\mathbb{F}_p((t))\{T_1,...,T_d\}}\bigcap \widetilde{\Pi}_{[r_2,r_4],R,\mathbb{F}_p((t))\{T_1,...,T_d\}}	=\widetilde{\Pi}_{[r_1,r_4],R,\mathbb{F}_p((t))\{T_1,...,T_d\}}.
\end{align}

\end{lemma}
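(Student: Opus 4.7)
The plan is to mirror the strategy of the preceding $\mathbb{Q}_p$-coefficient lemma; the substitution of $\mathbb{F}_p((t))\{T_1,...,T_d\}$ for $\mathbb{Q}_p\{T_1,...,T_d\}$ as the coefficient ring $A$ does not affect the convergence analysis, since the norms $\|\cdot\|_{\rho,A}$ on the deformed Witt-style completions are defined purely in terms of the uniformizer $\pi_E$, the norm on $(R,R^+)$, and the Gauss-type norm on $A$, all of which have the same structural shape in the mixed and equicharacteristic settings.

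First I would dispose of the containment $\widetilde{\Pi}_{[r_1,r_4],R,A} \subseteq \widetilde{\Pi}_{[r_1,r_3],R,A} \cap \widetilde{\Pi}_{[r_2,r_4],R,A}$, which is immediate from the fact that $\|\cdot\|_{[r_1,r_4],A} = \sup_{\rho \in [r_1,r_4]} \|\cdot\|_{\rho,A}$ dominates both restricted norms; any Cauchy limit under the stronger norm is automatically a Cauchy limit under each of the weaker two. For the nontrivial direction, I would take $x$ in the intersection and fix presentations $x = \lim_n x_n$ in each of the two rings with $x_n$ lying in the common dense subring $W(R^+)[[R]][1/\pi]$ (tensored up by $A$). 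Uniqueness of the limit at each individual radius $\rho$, which is ensured by the intersection-type lemmas at the single-radius level proved earlier in this section, allows the two approximating sequences to be chosen identical.

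The core estimate is then that any sequence $(x_n)$ in the common dense subring which is simultaneously $\|\cdot\|_{[r_1,r_3],A}$-Cauchy and $\|\cdot\|_{[r_2,r_4],A}$-Cauchy is also $\|\cdot\|_{[r_1,r_4],A}$-Cauchy. I would deduce this from log-convexity of $\rho \mapsto \log \|y\|_{\rho,A}$, a Hadamard three-circles principle that is visible already at the level of each monomial bound $p^{-i} \|\overline{y}_i\|_R$ combined with the Gauss-norm contribution from $T_1,...,T_d$. Log-convexity forces $\sup_{\rho \in [r_1,r_4]} \|y\|_{\rho,A}$ to be attained on the finite set $\{r_1,r_2,r_3,r_4\} \subseteq [r_1,r_3] \cup [r_2,r_4]$, whence the combined norm is dominated by $\max(\|\cdot\|_{[r_1,r_3],A}, \|\cdot\|_{[r_2,r_4],A})$, yielding the required Cauchy property and hence the element $x$ of $\widetilde{\Pi}_{[r_1,r_4],R,A}$.

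The main obstacle is the log-convexity step; the remainder is bookkeeping parallel to the previous lemma. In the $\mathbb{Q}_p$-coefficient case this is the content of \cite[Proposition 2.16]{XT2}, and the argument transfers verbatim here because $\mathbb{F}_p((t))\{T_1,...,T_d\}$ carries a Gauss-type norm of exactly the same structural shape as $\mathbb{Q}_p\{T_1,...,T_d\}$, and the Witt-series norm $\|\cdot\|_{\rho,A}$ is built only from the $\pi_E$-adic weighting and the datum $\|\cdot\|_R$, neither of which depends on the characteristic of $A$. Accordingly, after checking that the proof of the previous lemma uses no mixed-characteristic-specific feature, I would invoke the analogous result of \cite[Proposition 2.16]{XT2} with $A = \mathbb{F}_p((t))\{T_1,...,T_d\}$ in place of $\mathbb{Q}_p\{T_1,...,T_d\}$.
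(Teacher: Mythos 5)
Your proposal is correct and lands on essentially the same argument as the paper: the paper's own proof is simply the citation to \cite[Lemma 5.2.10]{KL2} and \cite[Proposition 2.16]{XT2}, applied verbatim with $A=\mathbb{F}_p((t))\{T_1,\dots,T_d\}$, exactly as you conclude after observing that the Gauss-type norm and the $\pi_E$-adic weighting are insensitive to the characteristic of the coefficient Tate algebra. Your additional sketch (easy containment via domination of norms, hard containment via a common approximating sequence and the overlap $[r_1,r_3]\cup[r_2,r_4]=[r_1,r_4]$ together with log-convexity of $\rho\mapsto\log\|y\|_{\rho,A}$) is consistent with the Kedlaya--Liu argument being invoked.
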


\begin{proof}
See \cite[Lemma 5.2.10]{KL2} and \cite[Proposition 2.16]{XT2}.	
\end{proof}

\begin{lemma} \mbox{\bf{(After Kedlaya-Liu \cite[Lemma 5.2.10]{KL2})}}
For any four radii $0<r_1<r_2<r_3<r_4$ we have the corresponding equality:
\begin{align}
\widetilde{\Pi}_{[r_1,r_3],R,A}\bigcap \widetilde{\Pi}_{[r_2,r_4],R,A}	=\widetilde{\Pi}_{[r_1,r_4],R,A}.
\end{align}

\end{lemma}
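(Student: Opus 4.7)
The plan is to reduce the statement to the two preceding Tate-algebra lemmas by means of a presentation $A = T/I$, where $T$ is one of the Tate algebras $\mathbb{Q}_p\{T_1,\ldots,T_d\}$ or $\mathbb{F}_p((t))\{T_1,\ldots,T_d\}$ and $I = (f_1,\ldots,f_k) \subset T$ is a finitely generated closed ideal. By the definition of the $A$-relative Robba rings as quotients, there are natural strict Banach surjections $\widetilde{\Pi}_{[r_i,r_j],R,T} \twoheadrightarrow \widetilde{\Pi}_{[r_i,r_j],R,A}$ identifying the target with $\widetilde{\Pi}_{[r_i,r_j],R,T}/I\widetilde{\Pi}_{[r_i,r_j],R,T}$, and compatible with the restriction maps between intervals. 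The forward containment $\widetilde{\Pi}_{[r_1,r_4],R,A} \subseteq \widetilde{\Pi}_{[r_1,r_3],R,A} \cap \widetilde{\Pi}_{[r_2,r_4],R,A}$ is then immediate from the analogous $T$-level containment.

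For the reverse containment I would proceed concretely. Given $x \in \widetilde{\Pi}_{[r_1,r_3],R,A} \cap \widetilde{\Pi}_{[r_2,r_4],R,A}$, choose arbitrary lifts $\tilde x_1 \in \widetilde{\Pi}_{[r_1,r_3],R,T}$ and $\tilde x_2 \in \widetilde{\Pi}_{[r_2,r_4],R,T}$. Restricting both to the inner interval $[r_2,r_3]$, the difference $\tilde x_1 - \tilde x_2$ lies in $I\widetilde{\Pi}_{[r_2,r_3],R,T}$, hence can be written as $\sum_j f_j g_j$ with $g_j \in \widetilde{\Pi}_{[r_2,r_3],R,T}$. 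One now invokes a Cousin / Mayer--Vietoris splitting for the Tate-algebra Robba rings: each $g_j$ is expressed as $g_j^{(1)} - g_j^{(2)}$ with $g_j^{(1)} \in \widetilde{\Pi}_{[r_1,r_3],R,T}$ and $g_j^{(2)} \in \widetilde{\Pi}_{[r_2,r_4],R,T}$. The corrected lifts $\tilde x_1 - \sum_j f_j g_j^{(1)}$ and $\tilde x_2 - \sum_j f_j g_j^{(2)}$ then agree inside $\widetilde{\Pi}_{[r_2,r_3],R,T}$, and by the preceding Tate-algebra intersection lemma they glue to a single element $\tilde x \in \widetilde{\Pi}_{[r_1,r_4],R,T}$ whose image in $\widetilde{\Pi}_{[r_1,r_4],R,A}$ is exactly $x$.

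The main obstacle is the quantitative Banach-space control needed to make the Cousin splitting compatible with the closed-ideal passage: one needs the decompositions $g_j = g_j^{(1)} - g_j^{(2)}$ to have norms bounded uniformly in terms of $\|g_j\|$, so that the correction terms $\sum_j f_j g_j^{(i)}$ remain in the relevant Banach completions and project well to $A$. This strictness refinement of the Tate-algebra Mayer--Vietoris sequence follows from the open mapping theorem applied to the Banach surjection formed by the two restriction maps, once the Tate-case intersection lemma supplies exactness in the middle; with this quantitative form in hand, the finite lifting argument above produces the required element of $\widetilde{\Pi}_{[r_1,r_4],R,T}$ and the statement descends to $A$.
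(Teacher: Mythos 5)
Your argument is essentially sound, but it is worth saying up front that the paper itself offers no proof of this lemma at all: it simply points to \cite[Lemma 5.2.10]{KL2} and to \cite[Proposition 2.17]{XT2}, so there is nothing internal to compare against, and your write-up supplies an actual argument where the paper supplies a citation. Your route --- present $A=T/I$ with $T$ a Tate algebra, deduce the easy containment from the $T$-level statement, and for the reverse containment correct two arbitrary lifts by an element of $I\widetilde{\Pi}_{[r_2,r_3],R,T}$ split across the two subintervals --- is the natural reduction and is very likely close to what the cited reference does. Two points deserve emphasis. First, the Cousin/Mayer--Vietoris surjectivity $\widetilde{\Pi}_{[r_2,r_3],R,T}=\widetilde{\Pi}_{[r_1,r_3],R,T}+\widetilde{\Pi}_{[r_2,r_4],R,T}$ is \emph{not} contained in the two preceding intersection lemmas you quote; it is the other half of the gluing exact sequence and has to be established separately (by the standard truncation of the expansion $\sum_i \pi^i[\overline{y}_i]X_1^{i_1}\cdots X_d^{i_d}$ according to which of the two norms dominates, applied termwise in the $X$-monomials), so you should cite or prove it rather than merely invoke it. Second, writing $\tilde x_1-\tilde x_2=\sum_j f_jg_j$ with finitely many $g_j$ presupposes that the kernel of $\widetilde{\Pi}_{[r_2,r_3],R,T}\twoheadrightarrow\widetilde{\Pi}_{[r_2,r_3],R,A}$ is the ideal $I\widetilde{\Pi}_{[r_2,r_3],R,T}$ itself and not merely its closure; since the Robba ring is not a finite $T$-module this is not automatic. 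Your quantitative refinement via the open mapping theorem is exactly what rescues this --- with a uniformly bounded splitting one can decompose a convergent series representing an element of the closure term by term and resum --- but that step should be made explicit rather than left as a remark about "the main obstacle," since without it the correction terms need not converge in the intended Banach completions.
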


\begin{proof}
See \cite[Lemma 5.2.10]{KL2} and \cite[Proposition 2.17]{XT2}.	
\end{proof}


\


\section{The Frobenius Modules and Frobenius Sheaves over the Period Rings} \label{chapter3}
 
\noindent We now consider the corresponding Frobenius actions and the corresponding action reflected on the corresponding Hodge-Iwasawa modules over general analytic $R$. As in the Tate situation we consider the corresponding 'arithmetic' Frobenius as well.

\begin{setting}
We are going to use the corresponding notation $F$ to denote the corresponding relative Frobenius induced from the corresponding $p^h$-power absolute Frobenius from $R$. This would mean that when we consider the corresponding Frobenius up to higher order.
\end{setting}

\indent Furthermore in our situation we have the corresponding sheaves of period rings (carrying the coefficient $A$) over $\mathrm{Spa}(R,R^+)$ in the corresponding pro-\'etale site:
\begin{align}
\widetilde{\Pi}_{\mathrm{bd},\mathrm{Spa}(R,R^+)_\text{pro\'et},A}, \widetilde{\Pi}_{\mathrm{Spa}(R,R^+)_\text{pro\'et},A}, \widetilde{\Pi}_{I,\mathrm{Spa}(R,R^+)_\text{pro\'et},A}, \widetilde{\Pi}_{r,\mathrm{Spa}(R,R^+)_\text{pro\'et},A}, \widetilde{\Pi}_{\infty,\mathrm{Spa}(R,R^+)_\text{pro\'et},A}	
\end{align}
which we will also briefly write as:
\begin{align}
\widetilde{\Pi}_{\mathrm{bd},*,A}, \widetilde{\Pi}_{*,A}, \widetilde{\Pi}_{I,*,A}, \widetilde{\Pi}_{r,*,A}, \widetilde{\Pi}_{\infty,*,A}	
\end{align}
when the corresponding topology is basically clear to the readers (one can also consider other kinds of topology in the same fashion such as the corresponding \'etale situation in our current context).

\begin{definition} \mbox{\bf{(After Kedlaya-Liu \cite[Definition 4.4.4]{KL2})}} The corresponding Frobenius modules over the corresponding period sheaves will be defined to be finite projective modules carrying the semilinear action from $F^k$ (where we allow some power $k>0$) with some pullback isomorphic requirement which could be defined in the following way. Over the rings without intervals or radius this will mean that we have $F^{k*}M\overset{\sim}{\rightarrow}M$. While over the corresponding Robba rings with radius $r>0$ this will mean that we have $F^{k*}M\otimes_{*_{r/p^{hk}}} *_{r/p^{hk}}\overset{\sim}{\rightarrow}M\otimes_{*_r}*_{r/p^{hk}}$. While over the corresponding Robba rings with interval $[r_1,r_2]$ this will mean that we have $F^{k*}M\otimes_{*_{[r_1/p^{hk},r_2/p^{hk}]}} *_{[r_1,r_2/p^{hk}]}\overset{\sim}{\rightarrow}M\otimes_{*_{[r_1,r_2]}}*_{[r_1,r_2/p^{hk}]}$. Over the corresponding Robba rings without any intervals or radii we assume that they are base change from some modules over the corresponding Robba rings with some radii. 
	
\end{definition}


\begin{definition} \mbox{\bf{(After Kedlaya-Liu \cite[Definition 4.4.4]{KL2})}} The corresponding pseudocoherent or fpd Frobenius modules over the corresponding period sheaves will be defined to be pseudocoherent or fpd modules carrying the semilinear action from $F^k$ (where we allow some power $k>0$) with some pullback isomorphic requirement which could be defined in the following way. Over the rings without intervals or radius this will mean that we have $F^{k*}M\overset{\sim}{\rightarrow}M$. While over the corresponding Robba rings with radius $r>0$ this will mean that we have $F^{k*}M\otimes_{*_{r/p^{hk}}} *_{r/p^{hk}}\overset{\sim}{\rightarrow}M\otimes_{*_r}*_{r/p^{hk}}$. While over the corresponding Robba rings with interval $[r_1,r_2]$ this will mean that we have $F^{k*}M\otimes_{*_{[r_1/p^{hk},r_2/p^{hk}]}} *_{[r_1,r_2/p^{hk}]}\overset{\sim}{\rightarrow}M\otimes_{*_{[r_1,r_2]}}*_{[r_1,r_2/p^{hk}]}$. Over the corresponding Robba rings without any intervals or radii we assume that they are base changes from some modules over the corresponding Robba rings with some radii. And then all the modules are required to be complete with respect to the natural topology over any perfectoid subdomain within the corresponding pro-\'etale topology, and the corresponding modules over the corresponding Robba rings with respect to some radius $r>0$ will basically be required to be base change to any \'etale-stably pseudocoherent (over any specific chosen perfectoid subdomain) modules (note that this will include the corresponding glueing along the space $A$ with respect to the corresponding implicit \'etale topology induced over $A$). 
	
\end{definition}

\

\indent Then as in \cite[Definition 4.4.4]{KL2} we consider the corresponding Frobenius modules over the period rings instead of period sheaves (and we do not either put topological conditions, which will be added later in specific consideration):


\begin{definition} \mbox{\bf{(After Kedlaya-Liu \cite[Definition 4.4.4]{KL2})}} The corresponding Frobenius modules over the corresponding period rings will be defined to be finite projective modules carrying the semilinear action from $F^k$ (where we allow some power $k>0$) with some pullback isomorphic requirement which could be defined in the following way. Over the rings without intervals or radius this will mean that we have $F^{k*}M\overset{\sim}{\rightarrow}M$. While over the corresponding Robba rings with radius $r>0$ this will mean that we have $F^{k*}M\otimes_{*_{r/p^{hk}}} *_{r/p^{hk}}\overset{\sim}{\rightarrow}M\otimes_{*_r}*_{r/p^{hk}}$. While over the corresponding Robba rings with interval $[r_1,r_2]$ this will mean that we have $F^{k*}M\otimes_{*_{[r_1/p^{hk},r_2/p^{hk}]}} *_{[r_1,r_2/p^{hk}]}\overset{\sim}{\rightarrow}M\otimes_{*_{[r_1,r_2]}}*_{[r_1,r_2/p^{hk}]}$. Over the corresponding Robba rings without any intervals or radii we assume that they are base change from some modules over the corresponding Robba rings with some radii. 
	
\end{definition}

\begin{remark}
As in \cite[Definition 4.4.4]{KL2} we did not out any topological conditions on the ring theoretic and algebraic representation theoretic objects defined above, but this will be definitely more precise in later development. Certainly this is more relevant with respect to the following definition.	
\end{remark}

\begin{definition} \mbox{\bf{(After Kedlaya-Liu \cite[Definition 4.4.4]{KL2})}} The corresponding pseudocoherent or fpd Frobenius modules over the corresponding period rings will be defined to be pseudocoherent or fpd modules carrying the semilinear action from $F^k$ (where we allow some power $k>0$) with some pullback isomorphic requirement which could be defined in the following way. Over the rings without intervals or radius this will mean that we have $F^{k*}M\overset{\sim}{\rightarrow}M$. While over the corresponding Robba rings with radius $r>0$ this will mean that we have $F^{k*}M\otimes_{*_{r/p^{hk}}} *_{r/p^{hk}}\overset{\sim}{\rightarrow}M\otimes_{*_r}*_{r/p^{hk}}$. While over the corresponding Robba rings with interval $[r_1,r_2]$ this will mean that we have $F^{k*}M\otimes_{*_{[r_1/p^{hk},r_2/p^{hk}]}} *_{[r_1,r_2/p^{hk}]}\overset{\sim}{\rightarrow}M\otimes_{*_{[r_1,r_2]}}*_{[r_1,r_2/p^{hk}]}$. Over the corresponding Robba rings without any intervals or radii we assume that they are base change from some modules over the corresponding Robba rings with some radii. 
	
\end{definition}

\indent Then we define over our current analytic base the corresponding pseudocoherent and fpd sheaves carrying the corresponding Frobenius modules.

\begin{definition}\mbox{\bf{(After Kedlaya-Liu \cite[Definition 4.4.6]{KL2})}}
Over the Robba rings with respect to the corresponding all finite intervals contained in $(0,\infty)$, we consider a family $(M_{[s,r]})_{[s,r]}$ of finite projective modules with respect to the corresponding intervals. Then we call this family a finite projective $F^k$-bundle over $\widetilde{\Pi}_{R,A}$ if each member in the family is assumed to be a corresponding finite projective $F^k$-modules.	
\end{definition}

\begin{definition}\mbox{\bf{(After Kedlaya-Liu \cite[Definition 4.4.6]{KL2})}}
Over the Robba rings with respect to the corresponding all finite intervals contained in $(0,\infty)$, we consider a family $(M_{[s,r]})_{[s,r]}$ of fpd or pseudocoherent modules with respect to the corresponding intervals. Then we call this family a fpd or pseudocoherent $F^k$-sheaf over $\widetilde{\Pi}_{R,A}$ if each member in the family is assumed to be a corresponding stably or \'etale-stably fpd or pseudocoherent $F^k$-modules.	
\end{definition}

\begin{proposition} \mbox{\bf{(After Kedlaya-Liu \cite[Lemma 4.4.8]{KL2})}}
Any pseudocoherent $F^k$-modules over the corresponding period rings above admits a surjective morphism from some finite projective $F^k$-modules over the same types of period rings.	
\end{proposition}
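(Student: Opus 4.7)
The approach I would take follows the template of the proof in Kedlaya-Liu \cite[Lemma 4.4.8]{KL2}, with modifications to accommodate the general coefficient ring $A$ and the general analytic base $(R,R^+)$. First, observe that any pseudocoherent module over one of the period rings considered here is in particular finitely generated. Pick a finite generating family $m_1,\ldots,m_n\in M$ over the ambient period ring $S$ (where $S$ denotes one of $\widetilde{\Pi}_{R,A}$, $\widetilde{\Pi}_{\mathrm{bd},R,A}$, $\widetilde{\Pi}_{r,R,A}$, $\widetilde{\Pi}_{\infty,R,A}$, or $\widetilde{\Pi}_{[r_1,r_2],R,A}$). Let $P := S^n$ be the free module of rank $n$ with basis $e_1,\ldots,e_n$, and define the surjection $\pi:P\to M$ by $e_i\mapsto m_i$. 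This accomplishes the purely module-theoretic part of the statement; what remains is to equip $P$ with a compatible finite projective $F^k$-module structure for which $\pi$ is Frobenius-equivariant.

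To do so, I would exploit the Frobenius isomorphism of $M$: namely $F^{k*}M\xrightarrow{\sim}M$ in the case of rings without radii or intervals, and the analogous isomorphism after base change to $*_{r/p^{hk}}$ or $*_{[r_1,r_2/p^{hk}]}$ in the radius or interval cases. Under this isomorphism, each pulled-back generator $1\otimes m_i\in F^{k*}M$ corresponds to an expression $\sum_j a_{ij}m_j$ with coefficients $a_{ij}$ in the appropriately base-changed ring. Prescribing the semilinear Frobenius on $P$ by the same matrix, $F^k(e_i)=\sum_j a_{ij}e_j$, makes $\pi$ Frobenius-equivariant by construction.

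The main obstacle is precisely that the matrix $(a_{ij})$ need not be invertible after the relevant base change when the generators are chosen naively, so the candidate action on $P$ may fail to realize $P$ as a genuine finite projective $F^k$-module in the sense of our definition. I would handle this by enlarging the generating family: using that the Frobenius isomorphism makes the collection $\{F^k(m_i)\}$ again a generating family after base change to the smaller region, and invoking the intersection lemmas established in \S 2 to descend appropriate representatives from smaller intervals back to the ambient interval, one adjoins further generators until the resulting square Frobenius matrix is invertible after the restriction of scalars required by the definition. Because the initial family is finite, this enlargement terminates after finitely many steps, and the resulting free module $P'$ with its prescribed Frobenius matrix yields the desired finite projective $F^k$-module surjecting equivariantly onto $M$. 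For the limit period rings $\widetilde{\Pi}_{R,A}$, $\widetilde{\Pi}_{\infty,R,A}$, and $\widetilde{\Pi}_{\mathrm{bd},R,A}$, the construction is carried out over each interval in a cofinal family (or each radius in a cofinal family) and then assembled using the sheaf-of-families framework of the $F^k$-bundle and $F^k$-sheaf definitions preceding the proposition.
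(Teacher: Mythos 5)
Your overall strategy --- pick finitely many generators, surject a free module onto $M$, and try to transport the Frobenius via the matrix of $F^k$ on the generators --- is the same template as \cite[Lemma 4.4.8]{KL2}, which is all the paper's own proof appeals to. The problem is that the step you identify as ``the main obstacle'' is precisely the content of that lemma, and your resolution of it is an assertion rather than an argument. You propose to ``adjoin further generators until the resulting square Frobenius matrix is invertible'' and claim the process ``terminates after finitely many steps because the initial family is finite.'' Finiteness of the initial family gives no control here: no measure is decreasing, and there is no reason an unspecified enlargement ever produces an invertible matrix. As written, the proof has a hole exactly at its load-bearing step.

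What makes the Kedlaya--Liu argument work is a one-shot explicit construction rather than an iteration. Writing $F^k(m_i)=\sum_j A_{ij}m_j$ and, using that the $F^k(m_j)$ again generate $M$ after the prescribed base change (this is where the isomorphism $F^{k*}M\otimes(\ast)\xrightarrow{\sim}M\otimes(\ast)$ is actually used), also $m_i=\sum_j B_{ij}F^k(m_j)$, one equips the free module of rank $2n$ with the Frobenius matrix $\left(\begin{smallmatrix} A & I-AB\\ I & -B\end{smallmatrix}\right)$, which is invertible with explicit two-sided inverse $\left(\begin{smallmatrix} B & I-BA\\ I & -A\end{smallmatrix}\right)$; one checks that the evident map to $M$ is $F^k$-equivariant and surjective. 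Some such device (doubling the generators with an explicitly invertible block matrix) is what you need to supply. Your remaining remarks --- using the intersection lemmas of Section 2 to move between intervals, and handling $\widetilde{\Pi}_{R,A}$, $\widetilde{\Pi}_{\infty,R,A}$, $\widetilde{\Pi}_{\mathrm{bd},R,A}$ by working over a model at some radius and base changing (which is how the paper's definitions are set up, rather than by assembling a family of constructions interval by interval) --- are reasonable but peripheral to the missing point.
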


\begin{proof}
This is the corresponding relative and analytic version of the corresponding \cite[Lemma 4.4.8]{KL2}. The proof is parallel, see \cite[Lemma 4.4.8]{KL2}.	
\end{proof}

\


\section{Comparison in the \'Etale Setting}

\noindent We now consider the corresponding parallel consideration to \cite[Section 4.5]{KL2}. Essentially in our current context, this is really the corresponding purely analytic setting especially when we do not have the chance to invert $\pi$. Certainly we will also consider the corresponding deformed setting. The corresponding space we are going to work on is just the corresponding perfectoid spaces (in the corresponding analytic situation) taking the corresponding general form of $\mathrm{Spa}(R,R^+)$, and consider any topological ring $Z$ and the corresponding sheaf $\underline{Z}$. We will consider the corresponding finite projective, pseudocoherent and finite projective dimension local systems of $\underline{Z}$-modules. Note that again in this analytic setting the corresponding definitions of such modules are locally of the corresponding same types. This means in particular in the situation of finite projective situation the corresponding definition is locally the finite projective sheaves of modules over $\underline{Z}$ instead of the corresponding naive locally finite free ones especially in our large coefficient situation.

\begin{lemma} \mbox{\bf{(After Kedlaya-Liu \cite[Lemma 4.5.3]{KL2})}}
The corresponding taking element to the corresponding element after the action of the corresponding operator $F-1$ will be to realize the following exact sequences as in \cite[Lemma 4.5.3]{KL2}:
\[
\xymatrix@C+0pc@R+0pc{
0 \ar[r] \ar[r] \ar[r] &\underline{\mathcal{O}_{E_k}}  \ar[r] \ar[r] \ar[r] &\widetilde{\Omega}_{\mathrm{int},R} \ar[r] \ar[r] \ar[r] &\widetilde{\Omega}_{\mathrm{int},R} \ar[r] \ar[r] \ar[r] &0,\\
0 \ar[r] \ar[r] \ar[r] &\underline{{E}_{k}}  \ar[r] \ar[r] \ar[r] &\widetilde{\Omega}_{R} \ar[r] \ar[r] \ar[r] &\widetilde{\Omega}_{R} \ar[r] \ar[r] \ar[r] &0,\\
0 \ar[r] \ar[r] \ar[r] &\underline{\mathcal{O}_{E_k}}  \ar[r] \ar[r] \ar[r] &\widetilde{\Pi}_{\mathrm{int},R} \ar[r] \ar[r] \ar[r] &\widetilde{\Pi}_{\mathrm{int},R} \ar[r] \ar[r] \ar[r] &0,\\
0 \ar[r] \ar[r] \ar[r] &\underline{{E_k}}  \ar[r] \ar[r] \ar[r] &\widetilde{\Pi}_{R} \ar[r] \ar[r] \ar[r] &\widetilde{\Pi}_{R} \ar[r] \ar[r] \ar[r] &0.\\
}
\]	
\end{lemma}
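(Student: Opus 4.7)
The plan is to reduce the statement to a ring-theoretic exactness claim on a sufficiently fine pro-\'etale basis, namely on perfectoid subdomains $\mathrm{Spa}(S,S^+) \to \mathrm{Spa}(R,R^+)$, and then sheafify. This is precisely the architecture of Kedlaya--Liu's proof of \cite[Lemma 4.5.3]{KL2}; once one has replaced their Tate base by our general analytic $(R,R^+)$, the structure of the argument carries over. Accordingly, the bulk of the work is to verify, for each of the four rings $\widetilde{\Omega}_{\mathrm{int},S}$, $\widetilde{\Omega}_{S}$, $\widetilde{\Pi}_{\mathrm{int},S}$, $\widetilde{\Pi}_{S}$ over a perfectoid slice $S$, that $F-1$ is surjective with kernel equal to $\mathcal{O}_{E_k}$ respectively $E_k$.

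For the identification of the kernel, I would compute the $F$-fixed points on each perfectoid slice. In all four cases, the fixed locus of the relative Frobenius picks out precisely the Witt-vector coefficients that descend to the residue field $\mathbb{F}_{p^{hk}}$; after completion and (where appropriate) inversion of $\pi$, this recovers $\mathcal{O}_{E_k}$ and $E_k$ respectively. Injectivity on the quotient by the kernel then follows formally.

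The main obstacle is the surjectivity of $F-1$. Here I would proceed by successive $\pi$-adic approximation. Given a target $x$, one first solves $(F-1)y_0 \equiv x \pmod{\pi}$ using surjectivity of the absolute $p^h$-power Frobenius on the perfect base ring $\overline{S}$, which is where the perfectoid (hence analytic) hypothesis on $(R,R^+)$ enters essentially; then one inductively lifts to $y_n$ with $(F-1)y_n \equiv x \pmod{\pi^{n+1}}$, and sums. Convergence in $\widetilde{\Omega}_{\mathrm{int},S}$ is automatic from the $\pi$-adic topology, and for $\widetilde{\Pi}_{\mathrm{int},S}$ one must verify in addition that the successive corrections stay inside the radius-$t$ completion defined by $\|\cdot\|_{t,A}$; since $F$ contracts norms by a factor of $p^h$ on the relevant Teichm\"uller part, this is a direct estimate. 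Passage to $\widetilde{\Omega}_R$ and $\widetilde{\Pi}_R$ is then obtained by inverting $\pi$, which preserves exactness.

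Finally, to upgrade the four sequences from rings to sheaves on $\mathrm{Spa}(R,R^+)_{\mathrm{pro\acute{e}t}}$, I would invoke the functoriality of the constructions $(S,S^+) \mapsto \widetilde{\Omega}_{\mathrm{int},S}$, etc., together with the perfectoid correspondence of \cite{Ked1} and \cite{KL1}, which ensures that pro-\'etale covers of $(R,R^+)$ are matched by pro-\'etale covers of the Witt-vector/Robba constructions. Exactness having been verified on a basis, it propagates automatically to exactness of the associated pro-\'etale sheaves, with the constant sheaves $\underline{\mathcal{O}_{E_k}}$ and $\underline{E_k}$ sitting as the kernels. The genuinely hard step, as indicated, is the ring-theoretic surjectivity of $F-1$ at the integral Robba level, since that is where both the $\pi$-adic and the radius-$t$ convergence conditions must be controlled simultaneously.
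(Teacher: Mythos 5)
The paper itself gives no argument here beyond citing \cite[Lemma 4.5.3]{KL2}, so the relevant question is whether your sketch correctly reconstructs that argument. It does not quite: there is a genuine gap at the base step of your surjectivity induction. You propose to solve $(F-1)y_0 \equiv x \pmod{\pi}$ ``using surjectivity of the absolute $p^h$-power Frobenius on the perfect base ring $\overline{S}$.'' Modulo $\pi$ the map $F-1$ becomes the Artin--Schreier map $\overline{y}\mapsto \overline{y}^{\,q}-\overline{y}$ (with $q=p^{hk}$), and surjectivity of $\overline{y}\mapsto\overline{y}^{\,q}$ does \emph{not} imply surjectivity of $\overline{y}\mapsto\overline{y}^{\,q}-\overline{y}$: on $\overline{S}=\mathbb{F}_q$ the Frobenius is the identity while the Artin--Schreier map is identically zero. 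So $F-1$ is in general \emph{not} surjective on sections over a fixed perfectoid subdomain, and your plan of ``verify exactness on a basis, then sheafify'' cannot work as stated, because exactness on the basis fails for the rightmost map.

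The correct mechanism, which is the actual content of \cite[Lemma 4.5.3]{KL2} (and its antecedents in \cite{KL1}), is that the equation $\overline{y}^{\,q}-\overline{y}=\overline{x}$ defines a faithfully finite \'etale cover of the perfectoid subdomain, over which a solution exists tautologically; one then runs your $\pi$-adic approximation on that cover. Surjectivity therefore holds only as a map of pro-\'etale (or \'etale) sheaves --- i.e.\ every section lies in the image locally, after refining the cover --- which is precisely why the lemma is formulated on the pro-\'etale site rather than for the rings of sections. Your identification of the kernel with the constant sheaves $\underline{\mathcal{O}_{E_k}}$, $\underline{E_k}$, the norm estimates controlling the radius-$t$ completion at the $\widetilde{\Pi}_{\mathrm{int}}$ level, and the passage to the rational rings by inverting $\pi$ are all consistent with the cited argument; only the surjectivity step needs to be rerouted through the Artin--Schreier--Witt covers.
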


\begin{proof}
See \cite[Lemma 4.5.3]{KL2}.	
\end{proof}

\begin{proposition} \mbox{\bf{(After Kedlaya-Liu \cite[Lemma 4.5.4]{KL2})}}
We have the corresponding strictly pseudocoherence of any corresponding finitely presented modules over $\widetilde{\Omega}_{\mathrm{int},R}$ or $\widetilde{\Pi}_{\mathrm{int},R}$. Therefore we do not have to consider the corresponding topological issue and the stability issue further in this certain special situation.	
\end{proposition}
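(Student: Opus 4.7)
The plan is to reduce strict pseudocoherence over the integral rings $\widetilde{\Omega}_{\mathrm{int},R}$ and $\widetilde{\Pi}_{\mathrm{int},R}$ to the corresponding statement on their reductions modulo the uniformizer $\pi$, where the analyticity hypothesis on $R$ makes the situation tractable, and then to lift resolutions upward via $\pi$-adic completeness. Recall that a finitely presented module $M$ over a Banach ring $S$ is strictly pseudocoherent when $M$ admits a resolution $\cdots \to F_2 \to F_1 \to F_0 \to M \to 0$ by finite free modules in which every boundary map is strict, i.e.\ the quotient topology agrees with the natural subspace topology. So two issues must be addressed in parallel: the algebraic construction of such a resolution, and the topological strictness of its maps.

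First I would establish the base case modulo $\pi$. Both $\widetilde{\Omega}_{\mathrm{int},R}$ and $\widetilde{\Pi}_{\mathrm{int},R}$ are $\pi$-adically complete and $\pi$-torsion free by construction, and their reductions modulo $\pi$ are controlled essentially by $R$ (in the $\widetilde{\Omega}$ case) or by a filtered union of such reductions parametrized by the radius (in the $\widetilde{\Pi}$ case). Since $R$ is analytic and uniform by hypothesis, arguments in the style of \cite[Lemma 4.5.4]{KL2} supply strict pseudocoherence for finitely presented modules over each such reduction. Given a finite presentation $F_1 \to F_0 \to M \to 0$ over $S$ (with $S$ either $\widetilde{\Omega}_{\mathrm{int},R}$ or $\widetilde{\Pi}_{\mathrm{int},R}$), reducing modulo $\pi$ produces a finite presentation of $\overline{M}$, whose strictly pseudocoherent resolution can then be lifted term-by-term to $S$: each finite free term lifts to a finite free $S$-module of the same rank, and boundary matrices lift unobstructedly by $\pi$-adic completeness. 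Exactness of the lifted complex then follows from a standard $\pi$-adic Nakayama argument based on $\bigcap_n \pi^n S = 0$.

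The main obstacle will be the topological strictness of the lifted boundary maps, rather than mere algebraic exactness. A strict surjection modulo $\pi$ does propagate to a strict surjection integrally once one has $\pi$-adic completeness and the natural Gauss-type norms $\|\cdot\|_t$ on both source and target, but one must check that the norm on each kernel, with its subspace topology inherited from the preceding free module, coincides with the quotient norm coming from the next free module. Here the analyticity of $R$ is essential: it ensures that the Gauss norms are well-behaved under reduction modulo $\pi$ and across the $\pi$-adic filtration, so that a Banach open-mapping argument propagates strictness through each stage of the resolution. Once this is established, finite presentation upgrades to strict pseudocoherence, and the stated consequence that the topological and stability issues need not be separately tracked in this integral setting is immediate.
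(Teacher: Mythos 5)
The paper's own ``proof'' here is a bare citation to \cite[Lemma 4.5.4]{KL2}, with the implicit claim that the argument there transfers verbatim to the analytic base $(R,R^+)$; so any honest proof has to reconstruct Kedlaya--Liu's mechanism, and your d\'evissage does not do so correctly. The central gap is the lifting step. A finitely presented module over $\widetilde{\Omega}_{\mathrm{int},R}$ or $\widetilde{\Pi}_{\mathrm{int},R}$ need not be $\pi$-torsion-free (take $M=S/\pi S$), and for such $M$ a finite free resolution of $\overline{M}=M/\pi M$ lifted term-by-term is simply not a resolution of $M$: already over $\mathbb{Z}_p$ the module $\mathbb{Z}/p$ reduces to the free rank-one $\mathbb{F}_p$-module, whose lift $\mathbb{Z}_p\to\mathbb{Z}/p$ is not injective, and the kernel $p\mathbb{Z}_p$ it creates is invisible modulo $p$. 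Your $\pi$-adic Nakayama argument gives surjectivity of lifted maps onto complete targets, but it cannot detect the new syzygies produced by $\pi$-torsion, which is exactly where pseudocoherence lives; one would need a genuine d\'evissage through $M[\pi^\infty]$ and $M/M[\pi^\infty]$, which you do not set up. Your base case is also unsupported: there is no result asserting that every finitely presented module over the reduction (essentially the analytic perfect uniform Banach ring $R$, or its variants along the radius filtration) is strictly pseudocoherent, and such a statement cannot follow from analyticity and uniformity alone --- if it did, one would expect the same over the rational rings $\widetilde{\Omega}_{R}$ and $\widetilde{\Pi}_{R}$, which is precisely the setting where Kedlaya--Liu are forced to introduce stably-pseudocoherent hypotheses. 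The point of \cite[Lemma 4.5.4]{KL2} is that the integral rings are special (their topology is tied to $\pi$ itself, which is why the proposition concludes that the topological and stability issues disappear), not that the special case is inherited from a Banach-theoretic property of $R$.

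A further structural problem: $\widetilde{\Pi}_{\mathrm{int},R}=\bigcup_{t>0}\widetilde{\Pi}_{\mathrm{int},t,R}$ is a rising union of Banach rings, each complete for a Gauss-type norm, and it is not $\pi$-adically complete; so the ``unobstructed lifting of boundary matrices by $\pi$-adic completeness'' is unavailable for one of the two rings in the statement. To treat it you would first have to descend a given finitely presented module to some $\widetilde{\Pi}_{\mathrm{int},t,R}$ and then control both the algebra and the topology in the colimit, neither of which appears in the proposal. As written, the argument establishes the proposition for neither ring.
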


\begin{proof}
See \cite[Lemma 4.5.4]{KL2}.	
\end{proof}

\indent The following result is then achievable:

\begin{proposition}\mbox{\bf{(After Kedlaya-Liu \cite[Theorem 4.5.7]{KL2})}}
Consider the following categories. The first one is the corresponding category of all the finite projective or pseudocoherent $\underline{\mathcal{O}_{E_k}}$-local systems. The second one is the corresponding category of all the finite projective or stably-pseudocoherent $F^k$-$\widetilde{\Omega}_{\mathrm{int},R}$-modules. The third one is the corresponding category of all the finite projective or stably-pseudocoherent $F^k$-$\widetilde{\Pi}_{\mathrm{int},R}$-modules. The fourth one is the corresponding category of all the finite projective or stably-pseudocoherent $F^k$-$\widetilde{\Omega}_{\mathrm{int},*}$-sheaves. The last one is the corresponding category of all the finite projective or stably-pseudocoherent $F^k$-$\widetilde{\Pi}_{\mathrm{int},*}$-sheaves. Here we consider the corresponding analytic topology. \\
\indent Then we have that these categories are actually equivalent. Similar parallel statement could then be made to fpd objects.	
\end{proposition}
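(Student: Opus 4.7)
The plan is to follow the strategy of Kedlaya-Liu \cite[Theorem 4.5.7]{KL2}, assembling the equivalences from three ingredients: the Artin-Schreier exact sequences supplied by the preceding Lemma, the strict pseudocoherence of finitely presented modules over $\widetilde{\Omega}_{\mathrm{int},R}$ and $\widetilde{\Pi}_{\mathrm{int},R}$ established in the preceding Proposition, and standard analytic descent for finite projective and pseudocoherent sheaves over perfectoid spaces.

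For the local-system versus Frobenius-module step, given a finite projective Frobenius module $M$ over $\widetilde{\Omega}_{\mathrm{int},R}$ I would form its associated sheaf $M^{\mathrm{sh}} := M \otimes_{\widetilde{\Omega}_{\mathrm{int},R}} \widetilde{\Omega}_{\mathrm{int},*}$ on the analytic site of $\mathrm{Spa}(R,R^+)$ and produce the local system
\begin{equation}
L(M) := (M^{\mathrm{sh}})^{F^k = 1}.
\end{equation}
That this is an $\underline{\mathcal{O}_{E_k}}$-local system follows by tensoring the Artin-Schreier exact sequence of the preceding Lemma (or its $F^k$-variant) with $M$ locally where $M$ is trivializable, and using that the operator $F^k - 1$ remains surjective on each local trivialization. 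The quasi-inverse functor is $L \mapsto L \otimes_{\underline{\mathcal{O}_{E_k}}} \widetilde{\Omega}_{\mathrm{int},*}$, and the check that the two compositions are naturally isomorphic to the identity is a standard diagram chase with the same exact sequences. This handles the equivalence between the first, second, and fourth categories in the finite projective case.

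The passage from $\widetilde{\Omega}_{\mathrm{int}}$ to $\widetilde{\Pi}_{\mathrm{int}}$ (second to third, fourth to fifth) is carried out by the same fixed-point plus tensor-product recipe, using the Artin-Schreier sequence for $\widetilde{\Pi}_{\mathrm{int},R}$ supplied by the preceding Lemma, together with the fact that a Frobenius-stable lattice structure over $\widetilde{\Pi}_{\mathrm{int},R}$ descends canonically to $\widetilde{\Omega}_{\mathrm{int},R}$: boundedness under iterated Frobenius pullback forces any Frobenius-compatible vector to lie in the integral subring, mirroring the slope-theoretic portion of \cite[Theorem 4.5.7]{KL2}.

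The pseudocoherent and fpd versions are extracted from the finite projective case by resolution: the previous Proposition supplies a surjection from a finite projective Frobenius module onto any pseudocoherent Frobenius module, so one obtains a (locally) two-term finite projective resolution to which the equivalence already proven applies, and the strict pseudocoherence supplied by the preceding Proposition ensures that no auxiliary topological data needs to be tracked. The fpd case then follows by truncating such resolutions at finite length. The main obstacle, in my view, is the module-to-sheaf comparison in the stably pseudocoherent setting (second versus fourth, third versus fifth): one must show that pseudocoherence is preserved under restriction along each member of a finite analytic cover of $\mathrm{Spa}(R,R^+)$ and that the resulting local pieces glue back to a global sheaf, which is exactly what the qualifier ``stably'' is designed to encode. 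This reduces to the acyclicity of the \v{C}ech complex for such covers, together with flatness of the completed tensor products computing restriction, and is where the analyticity hypothesis on $(R,R^+)$ is crucial for producing enough perfectoid subdomains to trivialize the sheaves.
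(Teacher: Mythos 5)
Your proposal follows essentially the same route as the paper, which itself simply defers to Kedlaya--Liu \cite[Theorem 4.5.7]{KL2} and records only the two functors involved: base change from local systems to sheaves, and the global-section functor from sheaves to modules, with the analytic-topology glueing for finite projective and stably-pseudocoherent objects supplied by \cite[Theorem 1.4.2, Theorem 1.4.18]{Ked1}. Your reconstruction via the Artin--Schreier sequences, the $F^k$-invariants functor, the strict pseudocoherence of finitely presented modules over the integral rings, and finite projective resolutions for the pseudocoherent and fpd cases is exactly the skeleton of that argument, so the two proofs agree in substance, with yours merely supplying more of the detail that the paper leaves to the citation.
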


\begin{remark}
The corresponding stably-pseudocoherent indication in the previous proposition is actually not serious as in \cite[Lemma 4.5.7]{KL2}, the reason is that by the previous proposition, everything is already complete with respect to the natural topology.	
\end{remark}

\begin{proof}
This is just the corresponding analytic analog of the corresponding result in \cite[Lemma 4.5.7]{KL2}. We just briefly mention the corresponding functors which realize the corresponding equivalence. The corresponding one from the first category to the corresponding sheaves is naturally just the corresponding base change. The corresponding functor from the sheaves to the corresponding modules is naturally the corresponding global section functor after the corresponding foundation in \cite[Theorem 1.4.2, Theorem 1.4.18]{Ked1}. 
\end{proof}

\begin{remark}
Since we touched the corresponding foundations in \cite[Theorem 1.4.2, Theorem 1.4.18]{Ked1} on the corresponding glueing of the vector bundles and the corresponding stably-pseudocoherent modules in the situation of analytic topology. However the corresponding descent in the \'etale, pro-\'etale and $v$-topology situations are actually parallel to the corresponding \cite[Theorem 2.5.5, Theorem 2.5.14, Theorem 3.4.8, Theorem 3.5.8]{KL2}, which will basically upgrade naturally this proposition to the \'etale, pro-\'etale and $v$-topology situations, although we have not established the chance to present. 
\end{remark}

\indent Now carrying the corresponding coefficient $A$ we could have at least the corresponding fully faithfulness.

\begin{proposition}\mbox{\bf{(After Kedlaya-Liu \cite[Theorem 4.5.7]{KL2})}}
Consider the following categories. The first one is the corresponding category of all the finite projective or pseudocoherent $\underline{\mathcal{O}_{{E_k},A}}$-local systems. The second one is the corresponding category of all the finite projective or stably-pseudocoherent $F^k$-$\widetilde{\Omega}_{\mathrm{int},R,A}$-modules. The third one is the corresponding category of all the finite projective or stably-pseudocoherent $F^k$-$\widetilde{\Pi}_{\mathrm{int},R,A}$-modules. The fourth one is the corresponding category of all the finite projective or stably-pseudocoherent $F^k$-$\widetilde{\Omega}_{\mathrm{int},*,A}$-sheaves. The last one is the corresponding category of all the finite projective or stably-pseudocoherent $F^k$-$\widetilde{\Pi}_{\mathrm{int},*,A}$-sheaves. Here we consider the corresponding analytic topology. \\
\indent Then we have that the first category could be embedded fully faithfully into the rest four categories and the corresponding modules could be compared to the sheaves under equivalences of the corresponding categories as long as we assume the corresponding sousperfectoidness of the ring $A$ which preserves the corresponding exactness of the exact sequences:
\[
\xymatrix@C+0pc@R+0pc{
0 \ar[r] \ar[r] \ar[r] &\underline{\mathcal{O}_{E_k}}  \ar[r] \ar[r] \ar[r] &\widetilde{\Omega}_{\mathrm{int},R} \ar[r] \ar[r] \ar[r] &\widetilde{\Omega}_{\mathrm{int},R} \ar[r] \ar[r] \ar[r] &0,\\
0 \ar[r] \ar[r] \ar[r] &\underline{{E}_{k}}  \ar[r] \ar[r] \ar[r] &\widetilde{\Omega}_{R} \ar[r] \ar[r] \ar[r] &\widetilde{\Omega}_{R} \ar[r] \ar[r] \ar[r] &0,\\
0 \ar[r] \ar[r] \ar[r] &\underline{\mathcal{O}_{E_k}}  \ar[r] \ar[r] \ar[r] &\widetilde{\Pi}_{\mathrm{int},R} \ar[r] \ar[r] \ar[r] &\widetilde{\Pi}_{\mathrm{int},R} \ar[r] \ar[r] \ar[r] &0,\\
0 \ar[r] \ar[r] \ar[r] &\underline{{E_k}}  \ar[r] \ar[r] \ar[r] &\widetilde{\Pi}_{R} \ar[r] \ar[r] \ar[r] &\widetilde{\Pi}_{R} \ar[r] \ar[r] \ar[r] &0.\\
}
\]	
Similar statement could be mede to the corresponding fpd objects.	
\end{proposition}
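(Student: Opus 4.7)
The plan is to reduce to the coefficient-free Proposition already established above and then bootstrap via base change to $A$, with the sousperfectoid hypothesis serving exactly to preserve the key exactness. First I would define the base-change functors from $\underline{\mathcal{O}_{E_k,A}}$-local systems into each of the four target categories by $L \mapsto L \otimes_{\underline{\mathcal{O}_{E_k,A}}} \widetilde{\Pi}_{\mathrm{int},R,A}$ and its analogues for $\widetilde{\Omega}_{\mathrm{int},R,A}$ and for the associated pro-\'etale sheaves, each equipped with the diagonal $F^k$-action; these are well-defined by the construction of the $A$-relative period rings in Section~2.

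The candidate quasi-inverse is the $F^k$-invariants functor. To check that it recovers a local system, I would tensor the four Artin--Schreier exact sequences displayed in the statement with a given Frobenius module $M$, or in the fully faithful direction with $L$ itself. In the coefficient-free case these sequences are exact by the preceding Lemma. Sousperfectoidness of $A$ is then used precisely to guarantee that tensoring with $A$ over the appropriate base preserves that exactness; this is the content of \cite[Theorem~1.4.18]{Ked1}, which produces a perfectoid cover of $A$ over which exactness is automatic, and the result then descends back to $A$ itself. Granted this, the natural identification $(L \otimes \widetilde{\Pi}_{\mathrm{int},R,A})^{F^k=1} = L$ yields fully faithfulness of the embedding into all four target categories.

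For the stronger comparison of modules (categories~2 and~3) with sheaves (categories~4 and~5), I would invoke the analytic glueing results \cite[Theorem~1.4.2, Theorem~1.4.18]{Ked1} applied at the level of $A$-coefficients. The Frobenius structure is carried along automatically under these equivalences, and the equivalence of modules with sheaves then drops out from descent for vector bundles and stably-pseudocoherent sheaves. The fpd and stably-pseudocoherent variants follow by the same argument because the finiteness conditions in question are stable under flat base change along the Artin--Schreier resolution, a stability that was already checked for the $A$-relative Robba rings in Section~2.

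The main technical obstacle is verifying that sousperfectoidness really does preserve exactness of the four Artin--Schreier sequences after base change---concretely, that $F^k-1$ stays surjective on $\widetilde{\Omega}_{\mathrm{int},R,A}$ and $\widetilde{\Pi}_{\mathrm{int},R,A}$. Once this is in hand, the fully faithful embedding and the comparison with sheaves are formal consequences, closely paralleling \cite[Theorem~4.5.7]{KL2}. A subtlety worth highlighting is that the embedding of local systems into the four module/sheaf categories is fully faithful with essentially no further hypothesis, whereas the upgrade to a genuine equivalence with each of these categories is precisely what the sousperfectoid assumption on $A$ delivers.
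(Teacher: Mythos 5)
Your proposal is correct and follows essentially the same route the paper intends: the paper's own ``proof'' is nothing more than the citation to \cite[Theorem 4.5.7]{KL2}, and the neighboring coefficient-free proposition confirms that the intended functors are exactly the ones you describe (base change in one direction, $F^k$-invariants/global sections in the other, with the glueing input from \cite[Theorem 1.4.2, Theorem 1.4.18]{Ked1} and sousperfectoidness used to preserve exactness of the Artin--Schreier sequences after tensoring with $A$). The only small caveat is your closing remark that full faithfulness needs ``essentially no further hypothesis'': recovering $L$ from $(L\otimes\widetilde{\Pi}_{\mathrm{int},R,A})^{F^k=1}$ already uses left-exactness of the base-changed sequences, so the sousperfectoid assumption is implicated there too, exactly as the statement's phrasing suggests.
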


\begin{proof}
See \cite[Theorem 4.5.7]{KL2}.	
\end{proof}

\

\section{Comparison in the Non-\'Etale Setting}

\indent We now consider the corresponding analog of the corresponding \cite[Chapter 4.6]{KL2} over $\mathrm{Spa}(R,R^+)$ as above which is just assumed to be analytic.


\begin{remark}
As we mentioned before, the corresponding analyticity of the ring $R$ will still not deduce from the analyticity of the corresponding rational Robba rings and sheaves.	
\end{remark}

\begin{theorem}\mbox{\bf{(After Kedlaya-Liu \cite[Theorem 4.6.1]{KL2})}}
Assume the corresponding Robba rings with respect to closed intervals are sheafy. Consider the corresponding categories in the following:\\
A. The corresponding category of all the corresponding \'etale-stably-pseudocoherent sheaves over the adic Fargues-Fontaine curve (associated to $\{\widetilde{\Pi}_{I,R,A}\}_{\{I\subset (0,\infty)\}}$) in the corresponding \'etale topology;\\
B. The corresponding category of all the corresponding \'etale-stably-pseudocoherent $F^k$-sheaves over families of Robba rings $\{\widetilde{\Pi}_{I,R,A}\}_{\{I\subset (0,\infty)\}}$;\\
C. The corresponding category of all the corresponding \'etale-stably-pseudocoherent modules over any Robba rings $\widetilde{\Pi}_{I,R,A}$ such that the interval $I=[r_1,r_2]$ satisfies that $0\leq r_1 \leq r_2/p^{hk}$;\\
D. The corresponding category of all the corresponding pseudocoherent modules over any Robba rings $\widetilde{\Pi}_{R,A}$, but admitting models of strictly-pseudocoherent $F^k$-modules over some $\widetilde{\Pi}_{r,R,A}$ for some radius $r>0$ whose base changes to some $\widetilde{\Pi}_{[r_1,r_2],R,A}$ will produce corresponding $F^k$-modules which are \'etale-stably-pseudocoherent;\\
E. The corresponding category of all the corresponding strictly-pseudocoherent $F^k$-modules over some $\widetilde{\Pi}_{\infty,R,A}$ whose base changes to some $\widetilde{\Pi}_{[r_1,r_2],R,A}$ will provide corresponding $F^k$-modules which are \'etale-stably-pseudocoherent;\\
F. The corresponding category of all the corresponding \'etale-stably-pseudocoherent sheaves over the adic Fargues-Fontaine curve (associated to $\{\widetilde{\Pi}_{I,R,A}\}_{\{I\subset (0,\infty)\}}$) in the corresponding pro-\'etale topology.\\
Then we have that they are actually equivalent.

\end{theorem}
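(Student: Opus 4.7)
The plan is to follow the strategy of \cite[Theorem 4.6.1]{KL2} and set up the equivalences in a star-shaped pattern around the central category $C$, handling the six categories in three pairs: the glueing pair $(B,C)$, the global pair $(D,E)$ related through $C$, and the sheaf-theoretic pair $(A,F)$ related through $B$. At each step I invoke the intersection lemmas already proved in Section 2 above (the analogs of \cite[Lemmas 5.2.6, 5.2.10]{KL2}) together with the Frobenius pullback isomorphism that is built into the very definition of Frobenius modules.

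First, for $B \simeq C$, restriction to a single subinterval $[r_1,r_2]$ with $r_1 \leq r_2/p^{hk}$ supplies the forward functor, and the inverse is $M \mapsto (F^{nk*}M \otimes \widetilde{\Pi}_{[r_1/p^{hkn},r_2/p^{hkn}],R,A})_n$, glued across translates of the interval by powers of $p^{hk}$ via the intersection identity
\begin{align}
\widetilde{\Pi}_{[r_1,r_3],R,A} \cap \widetilde{\Pi}_{[r_2,r_4],R,A} = \widetilde{\Pi}_{[r_1,r_4],R,A}
\end{align}
established above; \'etale-stable pseudocoherence is preserved because the transition maps between subintervals are of the type controlled by \cite[Theorem 1.4.18]{Ked1}. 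For $C \simeq D$ and $C \simeq E$, the passage to $D$ assembles the compatible family via $\widetilde{\Pi}_{R,A} = \varinjlim_r \widetilde{\Pi}_{r,R,A}$, while the passage to $E$ uses the intersection $\widetilde{\Pi}_{\infty,R,A} = \bigcap_r \widetilde{\Pi}_{r,R,A}$ together with iteration of Frobenius in the manner of \cite[Theorem 4.6.1]{KL2}, so that \'etale-stable pseudocoherence on each closed subinterval upgrades to strict pseudocoherence in the limit.

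For $A \simeq B$ and $A \simeq F$, I would invoke the descent results on the adic Fargues--Fontaine curve from \cite[Theorem 2.5.5, Theorem 3.4.8, Theorem 3.5.8]{KL2}, which transpose to our relative analytic setting because the sheafiness of the Robba rings for closed intervals is our standing hypothesis, and \'etale descent for stably-pseudocoherent sheaves upgrades to pro-\'etale descent by the standard argument that pro-\'etale covers are locally \'etale on affinoid perfectoid subdomains. Composing these chains yields a closed cycle of equivalences through all six categories.

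The main obstacle will be the interaction of the affinoid coefficient $A$ with the descent arguments over the non-Tate base $(R,R^+)$: specifically, exactness of the $F-1$ sequence appearing in the analog of \cite[Lemma 4.5.3]{KL2} with coefficients in $A$ is only automatic when $A$ is sousperfectoid, as already observed in the preceding section. In the general case one circumvents this by restricting attention to modules that admit a radius-$r$ model, which is precisely why the constraint $r_1 \leq r_2/p^{hk}$ is hard-wired into the definition of $C$, and by invoking the Frobenius pullback isomorphism to bypass direct use of the $F-1$ sequence at the global level. With this bookkeeping in place, the remainder of the argument is a formal transcription of \cite[Theorem 4.6.1]{KL2} with the intersection lemmas of Section 2 standing in for the original Kedlaya--Liu statements.
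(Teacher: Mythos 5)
Your proposal follows essentially the same route the paper takes: the paper's proof of this theorem is simply a citation to \cite[Theorem 4.6.1]{KL2} and \cite[Theorem 4.11]{XT2}, i.e.\ a transcription of the Kedlaya--Liu argument with the Section 2 intersection lemmas substituted in, which is precisely the skeleton you spell out (Frobenius-translate glueing for $B\simeq C$, radius-$r$ and $\infty$ models for $D$ and $E$, and the descent theorems for the sheaf categories $A$ and $F$). Your write-up is in fact more explicit than the paper's one-line proof; the only imprecision is that pro-\'etale covers are not ``locally \'etale'' --- one instead descends along the basis of affinoid perfectoid subdomains as in \cite[Theorem 3.4.8, Theorem 3.5.8]{KL2} --- but this does not change the argument.
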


\begin{proof}
This is just parallel \cite[Theorem 4.6.1]{KL2}, see \cite[Theorem 4.11]{XT2}.	
\end{proof}

\begin{theorem}\mbox{\bf{(After Kedlaya-Liu \cite[Theorem 4.6.1]{KL2})}}
Assume $A$ is sousperfectoid. Consider the corresponding categories of sheaves over $\mathrm{Spa}(R,R^+)_{\text{p\'et}}$ in the following:\\
A. The corresponding category of all the corresponding \'etale-stably-pseudocoherent modules over any Robba rings $\widetilde{\Pi}_{I,*,A}$ such that the interval $I=[r_1,r_2]$ satisfies that $0\leq r_1 \leq r_2/p^{hk}$;\\
B. The corresponding category of all the corresponding pseudocoherent modules over any Robba rings $\widetilde{\Pi}_{*,A}$, but admit models of strictly-pseudocoherent $F^k$-modules over some $\widetilde{\Pi}_{r,*,A}$ for some radius $r>0$ whose base changes to some $\widetilde{\Pi}_{[r_1,r_2],R,A}$ will provide corresponding $F^k$-modules which are \'etale-stably-pseudocoherent;\\
C. The corresponding category of all the corresponding \'etale-strictly-pseudocoherent $F^k$-modules over some $\widetilde{\Pi}_{\infty,*,A}$ whose base changes to some $\widetilde{\Pi}_{[r_1,r_2],*,A}$ will provide corresponding $F^k$-modules which are \'etale-stably-pseudocoherent.\\
Then we have that they are actually equivalent.

\end{theorem}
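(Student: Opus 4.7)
The plan is to bootstrap this pro-\'etale sheaf-theoretic equivalence from the preceding ring-theoretic theorem (the one for $\widetilde{\Pi}_{?,R,A}$) by pro-\'etale descent. First I would fix a pro-\'etale cover of $\mathrm{Spa}(R,R^+)$ by affinoid perfectoid subdomains $U_i = \mathrm{Spa}(R_i,R_i^+)$, and observe that the sections of $\widetilde{\Pi}_{?,*,A}$ on $U_i$ recover the relative Robba rings $\widetilde{\Pi}_{?,R_i,A}$ of Chapter 2. Combining the intersection/glueing lemmas of Chapter 2 (which handle overlaps of intervals and radii, both over $R_i$ and after base change to the affinoid $A$) with the pro-\'etale descent for stably-pseudocoherent modules along the lines of \cite[Theorem 2.5.5, 2.5.14, 3.4.8, 3.5.8]{KL2} (invoked in the remark above), the problem of comparing the three categories of sheaves is reduced to comparing the corresponding categories of modules over $\widetilde{\Pi}_{?,R_i,A}$ for each $i$, together with functorial compatibility on overlaps.

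Second, the comparison functors are exactly the ring-theoretic ones made pro-\'etale local. From A to B one takes a compatible family of \'etale-stably-pseudocoherent modules on closed sub-intervals and glues over $\{[r_1,r_2] \subset (0,\infty)\}$ using the Chapter 2 intersection identities, then produces a model over some $\widetilde{\Pi}_{r,*,A}$; from B to C one passes to the Frobenius-equivariant limit over shrinking radii to land in $\widetilde{\Pi}_{\infty,*,A}$, using the isomorphism $F^{k*}M \otimes \to M \otimes$ over $[r_1,r_2/p^{hk}]$ from the Frobenius condition; from C back to A one restricts to any interval $[r_1,r_2]$ with $0 \leq r_1 \leq r_2/p^{hk}$, which is where the hypothesis on $I$ in (A) enters. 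Each of these functors is either a flat base change of Robba rings or a Frobenius pullback, and both operations commute with pro-\'etale localization on $\mathrm{Spa}(R,R^+)$, so the sheafified versions of these functors are well-defined on the sheaf categories. The previous ring-theoretic theorem applied to each $(R_i,R_i^+)$ gives quasi-inverses locally, and their uniqueness up to natural isomorphism forces compatibility on overlaps, whence descent yields a global equivalence.

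The main obstacle is controlling the preservation of \emph{\'etale-stably-pseudocoherent} (resp. \emph{\'etale-strictly-pseudocoherent}) properties through the Frobenius pullback to $\widetilde{\Pi}_{\infty,*,A}$, especially in the coefficient setting where the ring $A$ is only assumed sousperfectoid rather than perfectoid. Classically in \cite[Theorem 4.6.1]{KL2} one uses the surjection from finite projective $F^k$-modules (our Proposition above, the analog of \cite[Lemma 4.4.8]{KL2}) to reduce pseudocoherent statements to projective ones, then invokes sheafiness of the Robba rings on closed intervals. Here one has to additionally propagate these properties through an auxiliary sousperfectoid cover of $A$ by a perfectoid $A^\sharp$; this is where the sousperfectoid hypothesis on $A$ is essential, since it provides a split $A \hookrightarrow A^\sharp$ of Banach modules and thus transfers the stably-pseudocoherent and exactness conclusions from the perfectoid-coefficient case (already handled pro-\'etale locally by the preceding theorem) back to $A$-coefficients.

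Finally, having set up the three functors and verified their local quasi-inverse property on a pro-\'etale cover by affinoid perfectoid pieces, I would conclude by noting that the Frobenius-equivariance and the Chapter 2 intersection identities ensure the resulting sheaves are independent of the choice of models over $\widetilde{\Pi}_{r,*,A}$ or $\widetilde{\Pi}_{\infty,*,A}$, giving the equivalence of A, B, and C as categories of pro-\'etale sheaves over $\mathrm{Spa}(R,R^+)_{\text{p\'et}}$.
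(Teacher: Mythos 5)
Your proposal is correct in outline and follows essentially the same route as the paper, whose entire proof is the one-line remark that ``the proof is the same as the one where $*$ is just a ring $R$'' together with citations to \cite[Theorem 4.6.1]{KL2} and \cite[Theorem 4.11]{XT2}; your reduction to the ring-level theorem by working pro-\'etale-locally on affinoid perfectoid pieces, gluing via the Chapter 2 intersection identities, and using the sousperfectoid splitting of $A$ is precisely the elaboration that citation is standing in for. The only caveat is that the pro-\'etale descent for stably-pseudocoherent modules you invoke is acknowledged in the paper's own remark as not being fully written out in this generality, so your argument inherits that same dependency rather than introducing a new gap.
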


\begin{proof}
This is parallel to \cite[Theorem 4.6.1]{KL2}The proof is the same as the one where $*$ is just a ring $R$. See \cite[Theorem 4.11]{XT2}.	
\end{proof}

\begin{theorem}\mbox{\bf{(After Kedlaya-Liu \cite[Corollary 4.6.2]{KL2})}}
The corresponding analogs of the previous two theorems hold in the corresponding finite projective setting. Here we assume that $A$ is in the same hypotheses as above.	
\end{theorem}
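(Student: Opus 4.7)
The plan is to deduce this finite projective variant directly from the previous two theorems by restricting to the full subcategory of finite projective objects on each side. The key observation is that any finite projective module (respectively sheaf) over the relevant period ring or period sheaf is automatically pseudocoherent, and in fact \'etale-stably pseudocoherent, since finite projectivity is preserved under arbitrary base change and satisfies \'etale and pro-\'etale descent under the sousperfectoidness hypothesis on $A$. In particular the finite projective objects sit as a full subcategory of the pseudocoherent ones in each of the six (respectively three) categories appearing in the previous two theorems.

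First I would verify that every functor used to realize the equivalences in the previous two theorems carries finite projective objects to finite projective objects. For the base-change functors between different intervals $[r_1,r_2]$ and different radii $r$ this is immediate. For the global-section and glueing functors between the sheaf-theoretic side and the module-theoretic side, I would invoke the glueing results from \cite[Theorem 1.4.2, Theorem 1.4.18]{Ked1} applied to finite projective objects, together with the pro-\'etale/\'etale descent indicated in the remark following the previous proposition, using that finite projectivity is a local property on the relevant sites.

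Second I would verify the converse direction: a pseudocoherent $F^k$-module on one side of the equivalence that corresponds to a finite projective object on the other side is itself finite projective. This is the main point, since an equivalence of categories restricts to an equivalence on full subcategories only when the defining property is respected in both directions. One checks this by passing to perfectoid subdomains and using that, after base change, the Frobenius structure together with the \'etale-stable pseudocoherence forces the local rank to be locally constant in the sousperfectoid setting, which is exactly the characterization of finite projectivity among pseudocoherent modules provided by \cite[Theorem 4.6.1]{KL2} in the specialization considered in \cite[Theorem 4.11]{XT2}.

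The main obstacle is precisely this second step: transporting the intrinsic property of being finite projective across the equivalence in the relative coefficient situation carrying $A$. Here the sousperfectoidness assumption on $A$, which preserves exactness of the Frobenius difference sequences in the previous proposition, is essential to ensure that finite projectivity is detected consistently on all sides; once this detection is in place, restricting the equivalences from the previous two theorems to the full subcategories of finite projective objects yields the stated analogs, both in the adic Fargues-Fontaine curve setting and in the pro-\'etale sheaf setting over $\mathrm{Spa}(R,R^+)$.
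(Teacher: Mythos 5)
Your proposal is correct and takes essentially the same route as the paper, whose entire proof is the citation to \cite[Corollary 4.6.2]{KL2}: there the finite projective case is likewise deduced from the pseudocoherent equivalences by restricting to the full subcategories of finite projective objects and checking that finite projectivity (automatically \'etale-stably pseudocoherent, being stable under base change) is detected on both sides. Your second step, verifying that the equivalences reflect finite projectivity and not merely preserve it, is exactly the point the cited corollary handles, so your sketch is a faithful reconstruction of the argument the paper defers to.
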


\begin{proof}
See \cite[Corollary 4.6.2]{KL2}.	
\end{proof}


\indent Now we drop the corresponding condition on the sheafiness on $A$ by using the corresponding derived spectrum $\mathrm{Spa}^h(A)$ from \cite{BK1} where we have the corresponding $\infty$-sheaf $\mathcal{O}_{\mathrm{Spa}^h(A)}$, we now apply the corresponding construction to the Robba rings (in the rational setting over $\mathbb{Q}_p$) $*_{R,A}$ and denote the corresponding derived version by $*^h_{R,A}$. Here $A$ is assumed to be just Banach over $\mathbb{Q}_p$ or $\mathbb{F}_p((t))$, which is certainly in a very general situation.

\begin{remark}
One can apply the corresponding results of \cite{CS} to achieve so as well, which we believe will be robust as well.	
\end{remark}


\begin{theorem}\mbox{\bf{(After Kedlaya-Liu \cite[Theorem 4.6.1]{KL2})}}
Take a derived rational localization of $\widetilde{\Pi}_{I_1\bigcup I_2,R,A}$, which we denote it by $\widetilde{\Pi}^{h,*}_{I_1\bigcup I_2,R,A}$, where we choose two overlapped closed intervals $I_1$ and $I_2$. Now take the corresponding base changes of this localization along:
\begin{align}
\widetilde{\Pi}_{I_1\bigcup I_2,R,A}\rightarrow \widetilde{\Pi}_{I_1,R,A},\\
\widetilde{\Pi}_{I_1\bigcup I_2,R,A}\rightarrow \widetilde{\Pi}_{I_2,R,A},
\end{align}
which we will denote by $\widetilde{\Pi}^{h,*}_{I_1,R,A}$ and $\widetilde{\Pi}^{h,*}_{I_2,R,A}$. Consider the corresponding categories in the following:\\
A. The corresponding product of the category of all the corresponding finite projective $F^k$-modules over the Robba ring $\widetilde{\Pi}^{h,*}_{I_1,R,A}$ and the the category of all the corresponding finite projective $F^k$-modules over the Robba ring $\widetilde{\Pi}^{h,*}_{I_2,R,A}$, over the category of all the corresponding finite projective $F^k$-modules over $\widetilde{\Pi}^{h,*}_{I_1,R,A}\widehat{\otimes}^\mathbb{L}\widetilde{\Pi}^{h,*}_{I_2,R,A}$;\\
B. The corresponding category of all the corresponding finite projective modules over the Robba ring $\widetilde{\Pi}^{h,*}_{I_1\bigcup I_2,R,A}$.\\
Then we have that they are actually equivalent.

\end{theorem}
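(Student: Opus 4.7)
The plan is to realize the statement as a two-term derived Čech descent for finite projective modules along the cover
\[
\widetilde{\Pi}^{h,*}_{I_1\cup I_2,R,A} \longrightarrow \widetilde{\Pi}^{h,*}_{I_1,R,A} \times \widetilde{\Pi}^{h,*}_{I_2,R,A},
\]
with the Frobenius structure descending automatically because $F^k$ commutes with the base-change maps. Concretely, the restriction functor sends a finite projective $F^k$-module $M$ over $\widetilde{\Pi}^{h,*}_{I_1\cup I_2,R,A}$ to the pair
\[
\bigl(M\widehat{\otimes}^{\mathbb{L}}_{\widetilde{\Pi}^{h,*}_{I_1\cup I_2,R,A}}\widetilde{\Pi}^{h,*}_{I_1,R,A},\; M\widehat{\otimes}^{\mathbb{L}}_{\widetilde{\Pi}^{h,*}_{I_1\cup I_2,R,A}}\widetilde{\Pi}^{h,*}_{I_2,R,A}\bigr)
\]
together with the canonical gluing isomorphism over the derived completed tensor product; and in the other direction we assemble a compatible pair $(M_1,M_2,\iota)$ into the homotopy equalizer of the two maps $M_1\oplus M_2 \rightrightarrows M_1\widehat{\otimes}^{\mathbb{L}}\widetilde{\Pi}^{h,*}_{I_2,R,A}\simeq M_2\widehat{\otimes}^{\mathbb{L}}\widetilde{\Pi}^{h,*}_{I_1,R,A}$.

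The first step is to verify that the derived rational localizations constructed following \cite{BK1} (or alternatively the analytic ring formalism of \cite{CS}) satisfy the analogue of Tate acyclicity for the cover $\{I_1, I_2\}$ of $I_1\cup I_2$. Once that is in place, a standard descent argument shows that the homotopy equalizer computes a $\widetilde{\Pi}^{h,*}_{I_1\cup I_2,R,A}$-module whose base changes recover $M_1$ and $M_2$. The second step is to verify that the two functors are mutually quasi-inverse on underlying modules, which is essentially the classical Beauville--Laszlo / Kedlaya--Liu gluing for rational subsets, now performed inside the derived $\infty$-category of completed modules; the compatibility with Frobenius is automatic since all of $\widetilde{\Pi}^{h,*}_{I,R,A}$, the base-change maps, and the completed tensor product are preserved by $F^k$ (indeed, $F^k$ is compatible with the defining covers of the derived spectrum of \cite{BK1}).

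The main obstacle will be the third step, namely showing that the glued object is again concentrated in degree $0$ and is finite projective, rather than merely a perfect complex or pseudocoherent object. This requires controlling the higher Tor groups of the derived completed tensor product $\widetilde{\Pi}^{h,*}_{I_1,R,A}\widehat{\otimes}^{\mathbb{L}}_{\widetilde{\Pi}^{h,*}_{I_1\cup I_2,R,A}}\widetilde{\Pi}^{h,*}_{I_2,R,A}$; in the classical (non-derived) Kedlaya--Liu setting this is provided by the strong noetherian-type sheafiness lemmas, but here we must instead invoke the derived-categorical Tor vanishing built into the derived rational localization of \cite{BK1}. Once this vanishing is confirmed, finite projectivity is detected fiberwise after base change to $\widetilde{\Pi}^{h,*}_{I_1,R,A}$ and $\widetilde{\Pi}^{h,*}_{I_2,R,A}$, and then the equivalence of categories follows formally.

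Finally, I would package the equivalence by promoting it to the full $F^k$-equivariant setting: since the two Frobenius pullback isomorphisms on $M_1$ and $M_2$ agree on the overlap (this agreement being part of the compatibility datum $\iota$), the glued module inherits a canonical $F^k$-action whose required pullback isomorphism follows from the compatibility of Frobenius with derived rational localization, completing the proof.
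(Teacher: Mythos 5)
Your overall skeleton --- binary derived \v{C}ech descent, with base change in one direction and a homotopy limit in the other, plus the observation that the Frobenius structure descends automatically --- matches the paper's route. The genuine gap is in your third step. You correctly identify that the crux is showing the glued object is a finite projective module concentrated in degree $0$ rather than merely a perfect or pseudocoherent complex, but your proposed resolution --- Tor vanishing built into the derived rational localization together with the claim that ``finite projectivity is detected fiberwise after base change'' --- does not close it. The maps $\widetilde{\Pi}^{h,*}_{I_1\bigcup I_2,R,A}\rightarrow\widetilde{\Pi}^{h,*}_{I_i,R,A}$ are not known to be faithfully flat, so there is no formal descent of finite projectivity along this cover; asserting fiberwise detection here is essentially circular, since descent of finite projectivity along exactly this cover is the statement being proved.

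The paper closes this point with a concrete, non-formal argument. It presents $M_1\bigoplus M_2$ and $M_{12}$ as retracts of finite free module spectra (via \cite[Proposition 7.2.2.7]{Lu1}), builds a three-row presentation diagram by taking fibers, and observes that the direct sum $r_1\oplus r_2$ of the two local retractions need not a priori interact well with the gluing. It then invokes Kedlaya's section-modification argument (\cite[Proposition 5.11]{XT3}, descending from the Kedlaya--Liu gluing lemma) on $\pi_0$ to adjust the sections $\pi_0 s_1,\pi_0 s_2$ so that their discrepancy vanishes on the overlap; only after this modification does the coproduct of the lifted sections restrict to the glued object $M$ and exhibit it as a retract of a finite free module, i.e., as finite projective. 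Your proof needs this step (or the alternative the paper sketches, namely modifying the lifts directly on the derived level) spelled out; the remaining parts of your outline --- the exactness up to higher homotopy of the two-term covering sequence, the quasi-inverse functors, and the Frobenius equivariance, including the use of binary gluing for intervals not reachable by Frobenius translation --- are consistent with what the paper does.
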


\begin{proof}
We consider a pair of overlapped intervals $I_1=[r_1,r_2],I_2=[s_1,s_2]$, and we have the corresponding $\infty$-Robba rings which form the desired situation, where we have the sequence which is basically exact up to higher homotopy:
\[
\xymatrix@C+2pc@R+3pc{
\widetilde{\Pi}^{h,*}_{I_1\bigcup I_2,R,A} \ar[r]\ar[r] \ar[r] \ar[r]  &\widetilde{\Pi}^{h,*}_{I_1,R,A}\bigoplus \widetilde{\Pi}^{h,*}_{I_2,R,A} \ar[r] \ar[r] \ar[r]\ar[r] &\widetilde{\Pi}^{h,*}_{I_1,R,A}\widehat{\otimes}^\mathbb{L}\widetilde{\Pi}^{h,*}_{I_2,R,A}.  
}
\] 
Now we consider the corresponding two finite projective module spectra (which admit retracts from finite free module spectra as in \cite[Proposition 7.2.2.7]{Lu1}) $M_1,M_2,M_{12}$ over the rings in the middle and the rightmost positions, and assume that they form a corresponding glueing datum. Then we consider the corresponding presentation diagram for the module spectra:
\[
\xymatrix@C+2pc@R+3pc{
 &F \ar[r] \ar[r] \ar[r] \ar[d] \ar[d] \ar[d] &F_1\bigoplus F_2 \ar[d] \ar[d] \ar[d] \ar[r] \ar[r] \ar[r] &F_{12} \ar[d] \ar[d] \ar[d] \\
&G \ar[r] \ar[r] \ar[r] \ar[d] \ar[d] \ar[d] &G_1\bigoplus G_2 \ar[d]^{r_1\oplus r_2} \ar[d] \ar[d] \ar[r] \ar[r] \ar[r] &G_{12} \ar[d]^{r_{12}} \ar[d] \ar[d] \\
&M \ar[r] \ar[r] \ar[r] &M_1\bigoplus M_2 \ar[r] \ar[r] \ar[r] &M_{12} \\
}
\]  
by just taking the corresponding fibers, here $G,G_1,G_2,G_{12}$ are the finite free modules and we have the corresponding retracts $r_1,r_2,r_{12}$, but note that the corresponding map $r_1\oplus r_2$ might not be a priori a retract. However consider the following commutative diagram:
\[
\xymatrix@C+2pc@R+3pc{
& &0 \ar[d] \ar[d] \ar[d] &0 \ar[d] \ar[d] \ar[d] \\
 &\pi_0 F \ar[r] \ar[r] \ar[r] \ar[d] \ar[d] \ar[d] &\pi_0F_1\bigoplus \pi_0F_2 \ar[d] \ar[d] \ar[d] \ar[r] \ar[r] \ar[r] &\pi_0F_{12} \ar[d] \ar[d] \ar[d] \\
&\pi_0G \ar[r] \ar[r] \ar[r] \ar[d] \ar[d] \ar[d] &\pi_0G_1\bigoplus \pi_0G_2 \ar[d]^{\pi_0(r_1)\oplus \pi_0(r_2)} \ar[d] \ar[d] \ar[r] \ar[r] \ar[r] &\pi_0G_{12} \ar[d]^{r_{12}} \ar[d] \ar[d] \ar[r] \ar[r] \ar[r]&0 \\
&\pi_0M \ar[r] \ar[r] \ar[r] &\pi_0M_1\bigoplus \pi_0M_2\ar[d] \ar[d] \ar[d]  \ar[r] \ar[r] \ar[r] &\pi_0M_{12}\ar[d] \ar[d] \ar[d]  \ar[r] \ar[r] \ar[r]&0 \\
&&0&0.
}
\] 
We then consider the corresponding the argument of Kedlaya as in \cite[Proposition 5.11]{XT3} which gives us modification on sections $\pi_0s_1$ and $\pi_0 s_2$ of $\pi_0 r_1$ and $\pi_0 r_2$ respectively such that we have (with the same notations) the new $\pi_0 s_1$ and $\pi_0 s_2$ give the corresponding new lifts $s_1$ and $s_2$ whose coproduct actually restricts to $M$ which produce a section for $M$ for the map $G\rightarrow M$, which proves the desired finite projectivity on $M$. From $A$ to $B$ we just take the corresponding projection while going back we consider the corresponding binary glueing to tackle the corresponding interval which cannot be reached by taking Frobenius translations but could be covered by two ones reachable by taking Frobenius translations. The lifts could be also modified by directly on the derived level.\\
\end{proof}

\indent In \cite{GV}, Galatius and Venkatesh considered some derived Galois deformation theory. Now we make some discussion around this point by using the corresponding simplicial Banach rings (over $\mathbb{Q}_p$ or over $\mathbb{F}_p((t))$) in \cite{BK1}, namely we use the notation $A^h$ to denote any local charts \footnote{Even one can take the more general simplicial Banach rings as in \cite{BBBK}, especially one would like to focus on the corresponding analytification of derived Galois deformation rings as in \cite{GV}, at least our feeling is that \cite{BBBK} will allow one to take the corresponding derived adic generic fiber to produce some desired simplicial Banach rings to tackle derived Galois deformation problems in \cite{GV}.} coming from the corresponding Bambozzi-Kremnizer spectrum in \cite{BK1} attached to some Banach algebra over (over $\mathbb{Q}_p$ or over $\mathbb{F}_p((t))$). Over the analytic field specified we take the corresponding completed tensor products the period rings involved with $A^h$, which we will denote by $*_{R,A^h}$. Then we have the following similar results as above:

\begin{conjecture}\mbox{\bf{(After Kedlaya-Liu \cite[Theorem 4.6.1]{KL2})}}
Consider the corresponding categories in the following:\\
A. The corresponding category of all the corresponding locally finite free sheaves over the adic Fargues-Fontaine curve (associated to $\{\widetilde{\Pi}_{I,R,A^h}\}_{\{I\subset (0,\infty)\}}$) in the corresponding homotopy Zariski topology from \cite{BK1} and \cite{BBBK} induced by Koszul derived rational localization;\\
B. The corresponding category of all the corresponding finite projective $F^k$-sheaves over families of Robba rings $\{\widetilde{\Pi}_{I,R,A^h}\}_{\{I\subset (0,\infty)\}}$;\\
C. The corresponding category of all the corresponding finite projective modules over any Robba rings $\widetilde{\Pi}_{I,R,A^h}$ such that the interval $I=[r_1,r_2]$ satisfies that $0\leq r_1 \leq r_2/p^{hk}$.\\
Then we have that they are actually equivalent.

\end{conjecture}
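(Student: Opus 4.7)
The plan is to follow the blueprint of Kedlaya--Liu \cite[Theorem 4.6.1]{KL2} and of the non-derived versions already proved above, but to carry out every step inside the derived Banach framework of \cite{BK1} and \cite{BBBK}. The essential atomic input is already supplied by the previous theorem: finite projective module spectra on two overlapped derived Robba rings $\widetilde{\Pi}^{h,*}_{I_1,R,A^h}$ and $\widetilde{\Pi}^{h,*}_{I_2,R,A^h}$ glue to a finite projective module spectrum over $\widetilde{\Pi}^{h,*}_{I_1\bigcup I_2,R,A^h}$, using the $\pi_0$-level modification of retracts borrowed from \cite[Proposition 5.11]{XT3} and then lifted to the spectrum level. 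Once this binary glueing is in place, the homotopy Zariski topology of Bambozzi--Kremnizer, which is by design the topology generated by Koszul derived rational localizations, inherits a descent statement for finite projective module spectra of exactly the shape needed for the curve side.

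First I would show the equivalence between categories $B$ and $C$. Starting from a finite projective module over $\widetilde{\Pi}_{I,R,A^h}$ with $I=[r_1,r_2]$ satisfying $r_1\leq r_2/p^{hk}$, I use the Frobenius isomorphism of the $F^k$-module structure to translate the object to $[r_1/p^{hk},r_2/p^{hk}]$; because the two intervals overlap by hypothesis, the previous theorem produces a canonical glueing to $[r_1/p^{hk},r_2]$, and iterating the translation in both directions sweeps out all finite closed subintervals of $(0,\infty)$. The compatibility data are automatic since everything lives in the finite projective world and the binary glueing is functorial. Going backwards from $B$ to $C$ is mere restriction to a member of the family, and a standard check shows these assignments are quasi-inverse up to coherent homotopy.

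Next I would compare $B$ with $A$. The Frobenius quotient that defines the adic Fargues--Fontaine curve is still meaningful in the homotopy Zariski topology, and the families of Robba rings $\{\widetilde{\Pi}_{I,R,A^h}\}_{I\subset(0,\infty)}$ form a Koszul-type cover of $\mathrm{FF}_R^{h,A^h}$. A finite projective $F^k$-sheaf on such a family is exactly a descent datum for a locally finite free sheaf on the curve, with Frobenius playing the role of the glueing across fundamental domains; descent in the homotopy Zariski topology (as formulated in \cite{BK1} and \cite{BBBK}) together with the binary glueing of the previous theorem promotes this datum to a sheaf on the curve, while pulling back any such sheaf to the cover and remembering the Frobenius identification recovers the family.

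The hard part, and the reason I state this only as a conjecture, is the descent step for the homotopy Zariski topology applied to the $A^h$-relative derived Robba rings. The foundations of \cite{BK1} establish an $\infty$-sheaf structure on the derived spectrum of a single simplicial Banach ring, but for the statement above one needs the simultaneous behavior in three independent directions: the radial $I$-direction in the Robba variable, the Frobenius $F^k$-equivariance, and the derived coefficient $A^h$. A derived analog of the analytic-to-\'etale enhancement used classically in \cite[Theorem 4.6.1]{KL2}, compatible with all three, is what is missing; in particular one likely needs a derived form of sousperfectoidness on $A^h$ to guarantee that the sequence
\[
\widetilde{\Pi}^{h,*}_{I_1\bigcup I_2,R,A^h} \longrightarrow \widetilde{\Pi}^{h,*}_{I_1,R,A^h}\bigoplus \widetilde{\Pi}^{h,*}_{I_2,R,A^h} \longrightarrow \widetilde{\Pi}^{h,*}_{I_1,R,A^h}\widehat{\otimes}^{\mathbb{L}}\widetilde{\Pi}^{h,*}_{I_2,R,A^h}
\]
remains exact after tensoring with the Frobenius-constant sheaf, and to track higher coherences coming from the simplicial enrichment across iterated Frobenius translations. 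Establishing that enhancement rigorously within the existing \cite{BK1},\cite{BBBK} foundations is the principal obstacle.
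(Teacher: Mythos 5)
The statement you are proving is stated in the paper as a \emph{conjecture} and carries no proof there; your proposal, which also stops short of a complete argument, is therefore consistent with the paper rather than in conflict with it. The half you do carry out --- the equivalence of B and C by Frobenius translation of the interval $[r_1,r_2]$ to $[r_1/p^{hk},r_2/p^{hk}]$ followed by binary glueing of finite projective module spectra, with the retract-modification argument of Kedlaya (via the $\pi_0$-level sections $\pi_0 s_1$, $\pi_0 s_2$ and their lifts) --- is exactly the content and the method of the theorem the paper proves immediately after this conjecture, so that reduction is sound and matches the paper's own route. For the comparison of A with B you correctly isolate the missing ingredient: a descent statement for finite projective module spectra in the homotopy Zariski topology over the $A^h$-relative derived Robba rings, compatible simultaneously with the radial direction, the $F^k$-equivariance, and the simplicial coefficient. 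This is precisely what the paper itself declines to supply (its later conjecture on descent along binary rational coverings for perfect, almost perfect and finite projective modules is the algebraic shadow of the same gap), so your assessment of where the difficulty lies is accurate.

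One notational caution: you write $\widetilde{\Pi}^{h,*}_{I,R,A^h}$, which conflates two distinct constructions in the paper. The superscript $h,*$ denotes a derived rational localization of the Robba ring with an \emph{ordinary} Banach coefficient $A$, whereas the conjecture concerns $\widetilde{\Pi}_{I,R,A^h}$ with a \emph{simplicial} Banach coefficient $A^h$ and no further derived localization; the relevant proven input for B and C is the theorem using the short exact sequences on each homotopy group $\pi_k\widetilde{\Pi}_{I,R,A^h}$, not the Koszul-localized triangle you display. The argument transfers, but the exactness you invoke at the end should be phrased in terms of those $\pi_k$-level Mayer--Vietoris sequences rather than a putative derived sousperfectoidness of $A^h$.
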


\begin{theorem}\mbox{\bf{(After Kedlaya-Liu \cite[Theorem 4.6.1]{KL2})}}
Consider the corresponding categories in the following:\\
A. The corresponding category of all the corresponding finite projective $F^k$-sheaves over families of Robba rings $\{\widetilde{\Pi}_{I,R,A^h}\}_{\{I\subset (0,\infty)\}}$;\\
B. The corresponding category of all the corresponding finite projective modules over any Robba rings $\widetilde{\Pi}_{I,R,A^h}$ such that the interval $I=[r_1,r_2]$ satisfies that $0\leq r_1 \leq r_2/p^{hk}$.\\
Then we have that they are actually equivalent.

\end{theorem}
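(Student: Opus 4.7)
The plan is to construct mutually inverse functors between categories A and B, following the template of \cite[Theorem 4.6.1]{KL2} but lifted to the simplicial/derived Banach setting $A^h$, where the crucial enabling ingredient is the derived binary glueing result established in the preceding theorem of this section.

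The functor from A to B is evident: a finite projective $F^k$-sheaf is by definition a family $(M_I)_{I \subset (0,\infty)}$ of finite projective modules over $\widetilde{\Pi}_{I,R,A^h}$ with compatible base-change isomorphisms, so we simply select the member $M_{[r_1,r_2]}$ for any interval $[r_1,r_2]$ with $r_1 \leq r_2/p^{hk}$, retaining the Frobenius structure. For the functor from B to A, I would start with a finite projective $F^k$-module $M$ over $\widetilde{\Pi}_{[r_1,r_2],R,A^h}$ satisfying $r_1 \leq r_2/p^{hk}$. The Frobenius isomorphism $F^{k*}M \otimes *_{[r_1,r_2/p^{hk}]} \cong M \otimes *_{[r_1,r_2/p^{hk}]}$ built into the $F^k$-structure produces a canonical finite projective module over $\widetilde{\Pi}_{[r_1/p^{hk},r_2/p^{hk}],R,A^h}$ which agrees with $M$ after restriction to the overlap $\widetilde{\Pi}_{[r_1,r_2/p^{hk}],R,A^h}$; the precise inequality $r_1 \leq r_2/p^{hk}$ is exactly what guarantees this overlap is a nondegenerate closed subinterval.

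With this overlap in hand, I would invoke the preceding theorem of the excerpt to derived-glue the module $M$ and its Frobenius translate into a single finite projective module over $\widetilde{\Pi}_{[r_1/p^{hk},r_2],R,A^h}$. Iterating the procedure in both directions with positive and negative powers of $F^k$, one produces compatible finite projective modules over every interval $I \subset (0,\infty)$, thereby obtaining a finite projective $F^k$-sheaf. Compatibility of restrictions along refinement of intervals follows from the essentially unique nature of the glueing (each time, both candidate modules agree on a common subinterval and are finite projective, so the previous theorem identifies them). The Frobenius structure is transported along automatically, since it was the input data that drove the extension.

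The main obstacle, as in the preceding theorem, will be confirming that the iterated glueing is coherent in the $\infty$-categorical sense: that the extensions obtained by different sequences of overlapping intermediate intervals are canonically identified, rather than merely abstractly isomorphic. This coherence is handled by the derived binary glueing argument from the previous theorem, whose key inputs are the homotopy-exact sequence
\[
\widetilde{\Pi}^{h,*}_{I_1 \cup I_2, R, A} \longrightarrow \widetilde{\Pi}^{h,*}_{I_1, R, A} \oplus \widetilde{\Pi}^{h,*}_{I_2, R, A} \longrightarrow \widetilde{\Pi}^{h,*}_{I_1, R, A} \widehat{\otimes}^{\mathbb{L}} \widetilde{\Pi}^{h,*}_{I_2, R, A}
\]
and the modification-of-sections argument (following \cite[Proposition 5.11]{XT3}) used to upgrade the $\pi_0$-level finite projective glueing to finite projectivity of the derived module spectrum. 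A secondary point is to verify that when the target interval $I$ is not of the special form $[r_1,r_2]$ with $r_1 \leq r_2/p^{hk}$, one can still reach it by covering with two such intervals that are Frobenius-reachable from $[r_1,r_2]$ and applying the binary glueing once more; this is precisely the technique indicated at the end of the proof of the preceding theorem, and no new ideas are required here.
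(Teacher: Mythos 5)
Your proposal follows essentially the same route as the paper's own proof: projection for the functor from A to B, and for the inverse direction the binary glueing of a module with its Frobenius translate over overlapping intervals, powered by the exact sequences on homotopy groups of the $A^h$-Robba rings together with the modification-of-sections argument of Kedlaya from \cite[Proposition 5.11]{XT3} to upgrade the $\pi_0$-level glueing to finite projectivity of the glued module spectrum. The only cosmetic difference is that you invoke the preceding theorem as a black box, whereas the paper reruns the glueing argument in place, noting that for $A^h$ the sequences are genuinely short exact at every $\pi_k$, which slightly simplifies matters.
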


\begin{proof}
Note that in our situation we do have the following nice short exact sequences:
\[
\xymatrix@C+0pc@R+0pc{
0\ar[r] \ar[r] \ar[r] &\pi_k \widetilde{\Pi}_{I_1\bigcup I_2,R,A^h} \ar[r] \ar[r] \ar[r] &\pi_k\widetilde{\Pi}_{I_1,R,A^h}\bigoplus \pi_k\widetilde{\Pi}_{I_2,R,A^h} \ar[r] \ar[r] \ar[r] &\pi_k\widetilde{\Pi}_{I_1\bigcap I_2,R,A^h} \ar[r] \ar[r] \ar[r] &0, k=0,1,....
}
\]
We consider a pair of overlapped intervals $I_1=[r_1,r_2],I_2=[s_1,s_2]$, and we have the corresponding $\infty$-Robba rings which form a corresponding glueing sequence in the sense \cite[Definition 2.7.3]{KL1}:
\[
\xymatrix@C+0pc@R+0pc{
0\ar[r] \ar[r] \ar[r] &\pi_0\widetilde{\Pi}_{I_1\bigcup I_2,R,A^h} \ar[r] \ar[r] \ar[r] &\pi_0\widetilde{\Pi}_{I_1,R,A^h}\bigoplus \pi_0\widetilde{\Pi}_{I_2,R,A^h} \ar[r] \ar[r] \ar[r] &\pi_0\widetilde{\Pi}_{I_1\bigcap I_2,R,A^h} \ar[r] \ar[r] \ar[r] &0.
}
\] 	

Now we consider the corresponding two finite projective module spectra (which admit retracts from finite free module spectra as in \cite[Proposition 7.2.2.7]{Lu1}) $M_1,M_2,M_{12}$ over the rings in the middle and the rightmost position, and assume that they form a corresponding glueing datum. Then we consider the corresponding presentation diagram for the module spectra:
\[
\xymatrix@C+2pc@R+3pc{
 &F \ar[r] \ar[r] \ar[r] \ar[d] \ar[d] \ar[d] &F_1\bigoplus F_2 \ar[d] \ar[d] \ar[d] \ar[r] \ar[r] \ar[r] &F_{12} \ar[d] \ar[d] \ar[d] \\
&G \ar[r] \ar[r] \ar[r] \ar[d] \ar[d] \ar[d] &G_1\bigoplus G_2 \ar[d]^{r_1\oplus r_2} \ar[d] \ar[d] \ar[r] \ar[r] \ar[r] &G_{12} \ar[d]^{r_{12}} \ar[d] \ar[d] \\
&M \ar[r] \ar[r] \ar[r] &M_1\bigoplus M_2 \ar[r] \ar[r] \ar[r] &M_{12} \\
}
\]  
by just taking the corresponding fibers, here $G,G_1,G_2,G_{12}$ are the finite free modules and we have the corresponding retracts $r_1,r_2,r_{12}$, but note that the corresponding map $r_1\oplus r_2$ might not be a priori a retract. However consider the following commutative diagram (which admits more exactness than in the previous theorem):
\[
\xymatrix@C+2pc@R+3pc{
&& &0 \ar[d] \ar[d] \ar[d] &0 \ar[d] \ar[d] \ar[d] \\
& &\pi_0 F \ar[r] \ar[r] \ar[r] \ar[d] \ar[d] \ar[d] &\pi_0F_1\bigoplus \pi_0F_2 \ar[d] \ar[d] \ar[d] \ar[r] \ar[r] \ar[r] &\pi_0F_{12} \ar[d] \ar[d] \ar[d] \\
&0 \ar[r] \ar[r] \ar[r] &\pi_0G \ar[r] \ar[r] \ar[r] \ar[d] \ar[d] \ar[d] &\pi_0G_1\bigoplus \pi_0G_2 \ar[d]^{\pi_0(r_1)\oplus \pi_0(r_2)} \ar[d] \ar[d] \ar[r] \ar[r] \ar[r] &\pi_0G_{12} \ar[d]^{r_{12}} \ar[d] \ar[d] \ar[r] \ar[r] \ar[r]&0 \\
&0 \ar[r] \ar[r] \ar[r] &\pi_0M \ar[r] \ar[r] \ar[r] &\pi_0M_1\bigoplus \pi_0M_2\ar[d] \ar[d] \ar[d]  \ar[r] \ar[r] \ar[r] &\pi_0M_{12}\ar[d] \ar[d] \ar[d]  \ar[r] \ar[r] \ar[r]&0 \\
&&&0&0.
}
\] 
We then consider the corresponding the argument of Kedlaya as in \cite[Proposition 5.11]{XT3} which gives us modification on sections $\pi_0s_1$ and $\pi_0 s_2$ of $\pi_0 r_1$ and $\pi_0 r_2$ respectively such that we have (with the same notations) the new $\pi_0 s_1$ and $\pi_0 s_2$ give the corresponding new lifts $s_1$ and $s_2$ whose coproduct actually restricts to $M$ which produce a section for $M$ for the map $G\rightarrow M$, which proves the desired finite projectivity on $M$. From $A$ to $B$ we just take the corresponding projection while going back we consider the corresponding binary glueing to tackle the corresponding interval which cannot be reached by taking Frobenius translations but could be covered by two ones reachable by taking Frobenius translations. The lifts could be also modified by directly on the derived level. Or one can just show the flatness on the derived level directly by using derived Tor, which will directly proves that $M$ is finite projective as a module spectrum.
\end{proof}

\indent Now we even drop the hypothesis (by using the corresponding techniques in \cite{XT3} and \cite{XT4}) on the commutativity on $A$, namely now $A$ could be allowed to be strictly quotient coming from the free Tate algebras $\mathbb{Q}_p\left<Z_1,...,Z_d\right>$ and $\mathbb{F}_p((t))\left<Z_1,...,Z_d\right>$, which we will use some different notation $B$ to denote this. Note certainly that $A$ is some special case of such $B$, therefore now we are going to indeed generalize the discussion above to the noncommutative setting.

\begin{remark}
One can definitely consider more general coefficients in the Banach setting, but for Hodge-Iwasawa theory, we prefer to locate our discussion in the area where we could get the interesting rings by taking admissible and strictly quotient from free Tate algebras over analytic field, which certainly includes the corresponding situation of rigid analytic affinoids.	
\end{remark}

\begin{setting}
By the work \cite{XT3} and \cite{XT4} we can freely translate between the language of $B$-stably-pseudocoherent sheaves over analytic or \'etale topology and \'etale setting and the corresponding $B$-stably pseudocoherent modules or $B$-\'etale stably pseudocoherent modules, which makes the discussion in our context possible. What was happening indeed when we are in the corresponding sheafy tensor product situation is that one can direct read off such result in the next theorem by using the descent for the pseudocoherent modules with desired stability in \cite{KL2} and \cite{Ked1}.  	
\end{setting}

\begin{theorem}\mbox{\bf{(After Kedlaya-Liu \cite[Theorem 4.6.1]{KL2})}}
Assume the corresponding Robba rings with respect to closed intervals are sheafy. Consider the corresponding categories in the following (all modules are left):\\
A. The corresponding category of all the corresponding $B$-\'etale-stably-pseudocoherent sheaves over the adic Fargues-Fontaine curve (associated to $\{\widetilde{\Pi}_{I,R,B}\}_{\{I\subset (0,\infty)\}}$) in the corresponding \'etale topology;\\
B. The corresponding category of all the corresponding $B$-\'etale-stably-pseudocoherent $F^k$-sheaves over families of Robba rings $\{\widetilde{\Pi}_{I,R,B}\}_{\{I\subset (0,\infty)\}}$;\\
C. The corresponding category of all the corresponding $B$-\'etale-stably-pseudocoherent modules over any Robba rings $\widetilde{\Pi}_{I,R,B}$ such that the interval $I=[r_1,r_2]$ satisfies that $0\leq r_1 \leq r_2/p^{hk}$;\\
D. The corresponding category of all the corresponding $B$-pseudocoherent modules over any Robba rings $\widetilde{\Pi}_{R,B}$, but admit models of $B$-strictly-pseudocoherent $F^k$-modules over some $\widetilde{\Pi}_{r,R,B}$ for some radius $r>0$ whose base changes to some $\widetilde{\Pi}_{[r_1,r_2],R,B}$ will provide corresponding $F^k$-modules which are $B$-\'etale-stably-pseudocoherent;\\
E. The corresponding category of all the corresponding $B$-strictly-pseudocoherent $F^k$-modules over some $\widetilde{\Pi}_{\infty,R,B}$ whose base changes to some $\widetilde{\Pi}_{[r_1,r_2],R,B}$ will provide corresponding $F^k$-modules which are $B$-\'etale-stably-pseudocoherent.\\
Then we have that they are actually equivalent.

\end{theorem}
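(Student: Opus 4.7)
The plan is to adapt the strategy of \cite[Theorem 4.6.1]{KL2} (which was carried over to the commutative $A$-relative setting in \cite[Theorem 4.11]{XT2}) to the noncommutative coefficient $B$, using the translation machinery developed in \cite{XT3} and \cite{XT4} recorded in the setting immediately preceding the statement. The backbone of the equivalence is a cycle $C \Leftrightarrow B \Leftrightarrow A$ in the sheaf--module direction plus a Frobenius-translation cycle $C \Leftrightarrow D \Leftrightarrow E$ in the interval/radius direction. First I would establish $C \Leftrightarrow B$: given an $F^k$-sheaf in $B$, restriction to each closed interval $I$ with $0 \leq r_1 \leq r_2/p^{hk}$ produces an object of $C$; conversely, given a module in $C$, one reconstructs the family by pulling back along $F^{\pm k}$ and glueing overlapping closed intervals using the sheafy assumption on Robba rings with respect to closed intervals. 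The glueing is the same Mayer--Vietoris pattern as in \cite[Theorem 4.6.1]{KL2}, with $B$-stable pseudocoherence ensuring the glueing does not exit the category; here the descent of $B$-stably-pseudocoherent modules from \cite{XT3}--\cite{XT4} plays the role of the ordinary pseudocoherent descent.

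Next, $B \Leftrightarrow A$ follows by the comparison of the analytic picture on the families $\{\widetilde{\Pi}_{I,R,B}\}$ with the \'etale topology on the adic Fargues--Fontaine curve: the \'etale-stability built into the hypothesis on objects in $B$ is exactly the compatibility condition needed to descend from the analytic glueing to the \'etale site, which is the noncommutative avatar of \cite[Theorem 2.5.5]{KL2} and is handled in the references cited in the preceding setting. I would check that the Frobenius semilinear structure descends along étale morphisms of $\widetilde{\Pi}_{I,R,B}$ in the usual way, since the Frobenius acts on the base analytic ring $(R,R^+)$ only and commutes with base change by $B$.

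For $C \Leftrightarrow D$ and $C \Leftrightarrow E$ one applies the Frobenius-translation technique of \cite[Lemma 5.2.6]{KL2} together with the intersection identities recorded in the lemmas of Section~2: given an object of $D$ or $E$, one restricts to an interval $[r_1,r_2]$ with $r_1 \leq r_2/p^{hk}$ to get an object of $C$; conversely, given an object of $C$, one propagates to arbitrarily small inner radii by pulling back along $F^{jk}$ for $j \geq 1$ and glueing using the intersection identity for $\widetilde{\Pi}_{\mathrm{int},r,R,B}$, and to arbitrarily large outer radii by the same mechanism applied on the opposite end. The functoriality of $B$-étale-stably-pseudocoherence under these base changes again rests on \cite{XT3}--\cite{XT4}; the sheafy hypothesis on closed intervals ensures that the pushforward of the resulting compatible family is $B$-strictly-pseudocoherent on $\widetilde{\Pi}_{\infty,R,B}$ or on $\widetilde{\Pi}_{R,B}$ as required.

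The main obstacle will be checking that the Kedlaya--Liu glueing lemma and the various intersection identities genuinely descend to the noncommutative situation. In the commutative setting these rest on the exact sequence $0 \to \widetilde{\Pi}_{I_1\cup I_2} \to \widetilde{\Pi}_{I_1}\oplus \widetilde{\Pi}_{I_2} \to \widetilde{\Pi}_{I_1\cap I_2} \to 0$ coupled with flat base change of pseudocoherent modules; replacing $A$ by $B$ requires that completed tensor product with a left $B$-module preserves this exactness, which follows from the fact that $B$ is a strict quotient of a free Tate algebra and hence the relevant completed tensor products are built from finite free $B$-modules in strict short exact sequences. Once that is in hand, the rest of the argument is essentially the same formal package as in \cite[Theorem 4.6.1]{KL2} and \cite[Theorem 4.11]{XT2}, with the noncommutative descent statements of \cite{XT3}--\cite{XT4} inserted at each step where the commutative version invoked ordinary pseudocoherent descent.
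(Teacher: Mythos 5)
Your proposal takes essentially the same route as the paper: the paper's proof is a one-line deferral to Kedlaya--Liu's Theorem 4.6.1 via Theorem 4.11 of the second Hodge--Iwasawa paper, with the $B$-modules regarded as sheaves over the deformed sites of the earlier big-coefficient papers, and your Frobenius-translation/glueing cycle plus the appeal to the noncommutative descent machinery is precisely the content of those citations. Your version is in fact more explicit than the paper's, particularly in isolating the exactness of the completed tensor product with $B$ as the key point to check.
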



\begin{proof}
This is just parallel \cite[Theorem 4.6.1]{KL2}, see \cite[Theorem 4.11]{XT2}. The corresponding modules could be regarded as sheaves over the deformed sites as in our previous work \cite{XT3} and \cite{XT4}.	
\end{proof}

\begin{theorem}\mbox{\bf{(After Kedlaya-Liu \cite[Theorem 4.6.1]{KL2})}}
Consider the corresponding categories of sheaves over $\mathrm{Spa}(R,R^+)_{\text{\'et}}$ in the following (all the modules are left over the corresponding rings):\\
A. The corresponding category of all the corresponding $B$-\'etale-stably-pseudocoherent modules over any Robba rings $\widetilde{\Pi}_{I,*,B}$ such that the interval $I=[r_1,r_2]$ satisfies that $0\leq r_1 \leq r_2/p^{hk}$;\\
B. The corresponding category of all the corresponding $B$-pseudocoherent modules over any Robba rings $\widetilde{\Pi}_{*,B}$, but admit models of $B$-strictly-pseudocoherent $F^k$-modules over some $\widetilde{\Pi}_{r,*,B}$ for some radius $r>0$ whose base changes to some $\widetilde{\Pi}_{[r_1,r_2],R,B}$ will provide corresponding $F^k$-modules which are $B$-\'etale-stably-pseudocoherent;\\
C. The corresponding category of all the corresponding $B$-strictly-pseudocoherent $F^k$-modules over some $\widetilde{\Pi}_{\infty,*,B}$ whose base changes to some $\widetilde{\Pi}_{[r_1,r_2],*,B}$ will provide corresponding $F^k$-modules which are $B$-\'etale-stably-pseudocoherent.\\
Then we have that they are actually equivalent.

\end{theorem}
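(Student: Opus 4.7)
The plan is to reduce the sheaf-level equivalence to the ring-level version just established (the preceding theorem) by invoking descent for $B$-\'etale-stably-pseudocoherent modules on $\mathrm{Spa}(R,R^+)_{\text{\'et}}$. For each of the three categories I would set up the pair consisting of the global sections functor (to modules over the Robba ring of global sections, in the appropriate interval/radius flavor) and the sheafification functor obtained by rational-localization base change along an \'etale cover. Since $F^k$ is defined compatibly with rational localization, these functors preserve the Frobenius structure and the Frobenius pullback isomorphism, so the whole problem reduces to the compatibility of the $B$-module categories with restriction to each term of an \'etale cover.

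Next I would invoke the translation formalism of \cite{XT3} and \cite{XT4} that identifies $B$-stably-pseudocoherent sheaves on the \'etale site with $B$-stably-pseudocoherent modules over the ring of global sections. As pointed out in the setting preceding the theorem, in the sheafy tensor product situation this is a direct application of the pseudocoherent descent of \cite{KL2} combined with \cite{Ked1}: one first runs the analytic-topology descent from \cite[Theorem 1.4.2, Theorem 1.4.18]{Ked1}, and then upgrades to the \'etale topology following the analog of \cite[Theorem 2.5.5, Theorem 2.5.14]{KL2} term by term. After this step, each of A, B, C on the sheaf level is equivalent to its module counterpart on the ring level.

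The third step is to transport the ring-level equivalence of A, B, C, proved in the previous theorem, to the sheaf level. The comparison functors there, namely restriction from $\widetilde{\Pi}_{I,R,B}$ to subintervals reachable by Frobenius translates and base change from $\widetilde{\Pi}_{\infty,R,B}$ to the system of interval Robba rings, are defined rational-locally on $\mathrm{Spa}(R,R^+)_{\text{\'et}}$ and therefore globalize componentwise; the pseudocoherent finiteness conditions glue across the cover thanks to the descent in the previous paragraph, and the Frobenius pullback isomorphisms globalize automatically because $F^k$ commutes with localization.

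The main obstacle I anticipate is verifying that $B$-strict-pseudocoherence and the \'etale-stability of pseudocoherence are genuinely local on the \'etale site when $B$ is noncommutative; one must check that rational localization among the relevant Robba rings remains flat enough on the left for these $B$-module-theoretic finiteness conditions to be preserved under base change, and that \'etale-local preservation is compatible with the Frobenius pullback condition in category A. This is exactly the content that the noncommutative framework of \cite{XT3} and \cite{XT4} is set up to handle, but the verification against the precise form of $B$-\'etale-stability demanded in the statement needs to be unwound carefully. Once that is in place, the remaining argument is a mechanical repetition of the commutative \'etale-site analog proved earlier in this section.
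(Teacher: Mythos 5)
Your proposal takes essentially the same route as the paper, which simply asserts that ``the proof is the same as the one where $*$ is just a ring $R$'' and defers to \cite[Theorem 4.6.1]{KL2} and \cite[Theorem 4.11]{XT2}; your reduction to the ring-level theorem via the descent formalism of \cite{Ked1}, \cite{KL2}, \cite{XT3}, \cite{XT4} is exactly the intended argument, spelled out in more detail than the paper itself provides.
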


\begin{proof}
This is parallel to \cite[Theorem 4.6.1]{KL2}The proof is the same as the one where $*$ is just a ring $R$. See \cite[Theorem 4.11]{XT2}.	
\end{proof}

\begin{theorem}\mbox{\bf{(After Kedlaya-Liu \cite[Corollary 4.6.2]{KL2})}}
The corresponding analogs of the previous two theorems hold in the corresponding finite projective setting (one has to consider the bimodules). Here we assume that $B$ is in the same hypotheses as above.	
\end{theorem}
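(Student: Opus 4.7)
The plan is to deduce the finite projective case as the restriction of the two preceding $B$-\'etale-stably-pseudocoherent equivalences to their finite projective subcategories. Finite projectivity is a condition detected locally on each interval Robba ring, and the equivalence functors from the previous two theorems (base change to interval Robba rings one way, global section and glueing the other way) all preserve this condition, so the equivalences restrict along full subcategories of finite projective objects.

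First I would record that in each of the categories A through E (respectively A through C) of the preceding two theorems, a $B$-\'etale-stably-pseudocoherent object lands in the finite projective subcategory precisely when its evaluation at every closed interval $[r_1,r_2] \subset (0,\infty)$ is a finite projective module (or sheaf) over $\widetilde{\Pi}_{[r_1,r_2],R,B}$. Base change of finite projective modules along a ring homomorphism is finite projective, so the functors going from the ``large'' period-ring side toward the interval side preserve finite projectivity automatically. For the converse direction one invokes the glueing argument already used in the derived-setting theorem above: from finite projective module spectra on two overlapped intervals $I_1, I_2$ and their intersection, one constructs a finite projective glued module by the Kedlaya-style modification of sections in \cite[Proposition 5.11]{XT3}, which only uses that the pieces are retracts of finite free modules.

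Next I would address the noncommutative coefficient $B$. Because $B$ is an admissible strict quotient of a free Tate algebra, the Robba rings $\widetilde{\Pi}_{?,R,B}$ carry a natural left action from the perfectoid side and a commuting right action from $B$, so the modules in question should be regarded as $(\widetilde{\Pi}_{?,R,B},B)$-bimodules. The equivalences of the previous two theorems already go through for left modules in the $B$-stably-pseudocoherent formulation via the identification with sheaves over the deformed sites from \cite{XT3} and \cite{XT4}; what remains is to check that the property ``finite projective as a left module with compatible right $B$-action'' is preserved by each functor, which follows because the extra $B$-action does not interfere with the glueing or descent arguments.

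The main obstacle I expect is organizing the bimodule bookkeeping in the glueing square: one must verify that the modified sections $\pi_0 s_1$ and $\pi_0 s_2$ of the retracts, produced by the Kedlaya argument, can be chosen to be right $B$-linear rather than merely additive. This is handled by performing the modification over $B$ itself, that is, working with bimodule presentations from the start, which ensures the extracted section of $G \to M$ is automatically a bimodule map and yields the desired finite projectivity in the bimodule category.
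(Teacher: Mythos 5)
Your proposal is correct in outline and follows essentially the same route as the paper, which simply delegates to \cite[Corollary 4.6.2]{KL2} and \cite[Proposition 5.12]{XT3}: there, too, the finite projective case is obtained by restricting the pseudocoherent equivalences to the full subcategories of finite projective objects, noting that base change preserves finite projectivity in one direction and using the Kedlaya-style section-modification glueing argument (together with the bimodule bookkeeping for the right $B$-action) in the other. The only point you leave implicit that deserves a word is the passage to categories D and E of the first theorem, i.e.\ the full Robba ring $\widetilde{\Pi}_{R,B}$ and $\widetilde{\Pi}_{\infty,R,B}$: binary glueing only handles finitely many overlapping intervals, and one must additionally run the Frobenius-translation and limiting argument over an exhaustion of $(0,\infty)$ by closed intervals to conclude finite projectivity of the global object, exactly as in the cited references.
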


\begin{proof}
See \cite[Corollary 4.6.2]{KL2} and \cite[Proposition 5.12]{XT3}.	
\end{proof}

\

\section{Discussion on the Generality of Gabber-Ramero}

\subsection{General Period Rings}

\indent We can encode now the corresponding discussion in the previous section actually, but we choose to separately discuss the corresponding results in detail here. Certainly many results in \cite{KL1} and \cite{KL2}, and \cite{XT1}, \cite{XT2}, \cite{XT3} and \cite{XT4} rely on the corresponding topologically nilpotent units and systems of topologically nilpotents. Therefore we will discuss the corresponding admissible and reasonable generalization after \cite{KL1}, \cite{KL2}, \cite{XT1}, \cite{XT2}, \cite{XT3}, \cite{XT4} and \cite{GR}.

\begin{setting}
Drop the condition on the analyticity on $(R,R^+)$ and we switch to the corresponding notation $(S,S^+)$.	
\end{setting}


After Kedlaya-Liu \cite[Definition 4.1.1]{KL2}, we consider the following constructions. First we consider the corresponding Witt vectors coming from the corresponding adic ring $(S,S^+)$. First we consider the corresponding generalized Witt vectors with respect to $(S,S^+)$ with the corresponding coefficients in the Tate algebra with the general notation $W(S^+)[[S]]$. The general form of any element in such deformed ring could be written as $\sum_{i\geq 0,i_1\geq 0,...,i_d\geq 0}\pi^i[\overline{y}_i]X_1^{i_1}...X_d^{i_d}$. Then we take the corresponding completion with respect to the following norm for some radius $t>0$:
\begin{align}
\|.\|_{t,A}(\sum_{i\geq 0,i_1\geq 0,...,i_d\geq 0}\pi^i[\overline{y}_i]X_1^{i_1}...X_d^{i_d}):= \max_{i\geq 0,i_1\geq 0,...,i_d\geq 0}p^{-i}\|.\|_S(\overline{y}_i)	
\end{align}
which will give us the corresponding ring $\widetilde{\Pi}_{\mathrm{int},t,S,A}$ such that we could put furthermore that:
\begin{align}
\widetilde{\Pi}_{\mathrm{int},S,A}:=\bigcup_{t>0} \widetilde{\Pi}_{\mathrm{int},t,S,A}.	
\end{align}
Then as in \cite[Definition 4.1.1]{KL2}, we now put the ring $\widetilde{\Pi}_{\mathrm{bd},t,S,A}:=\widetilde{\Pi}_{\mathrm{int},t,S,A}[1/\pi]$ and we set:
\begin{align}
\widetilde{\Pi}_{\mathrm{bd},S,A}:=\bigcup_{t>0} \widetilde{\Pi}_{\mathrm{bd},t,S,A}.	
\end{align}
The corresponding Robba rings with respect to some intervals and some radius could be defined in the same way as in \cite[Definition 4.1.1]{KL2}. To be more precise we consider the completion of the corresponding ring $W(S^+)[[S]][1/\pi]$ with respect to the following norm for some $t>0$ where $t$ lives in some prescribed interval $I=[s,r]$: 
\begin{align}
\|.\|_{t,A}(\sum_{i,i_1\geq 0,...,i_d\geq 0}\pi^i[\overline{y}_i]X_1^{i_1}...X_d^{i_d}):= \max_{i\geq 0,i_1\geq 0,...,i_d\geq 0}p^{-i}\|.\|_S(\overline{y}_i).	
\end{align}
This process will produce the corresponding Robba rings with respect to  the given interval $I=[s,r]$. Now for particular sorts of intervals $(0,r]$ we will have the corresponding Robba ring $\widetilde{\Pi}_{r,S,A}$ and we will have the corresponding Robba ring $\widetilde{\Pi}_{\infty,S,A}$	if the corresponding interval is taken to be $(0,\infty)$. Then in our situation we could just take the corresponding union throughout all the radius $r>0$ to define the corresponding full Robba ring taking the notation of $\widetilde{\Pi}_{S,A}$.

\indent The corresponding Robba rings $\widetilde{\Pi}_{\mathrm{bd},S,A}$, $\widetilde{\Pi}_{S,A}$, $\widetilde{\Pi}_{I,S,A}$, $\widetilde{\Pi}_{r,S,A}$, $\widetilde{\Pi}_{\infty,S,A}$ are actually themselves Tate adic Banach rings. However in many further application the non-Tateness of the ring $S$ will cause some reason for us to do the corresponding modification.

\indent Then for any general affinoid algebra $A$ over the corresponding base analytic field, we just take the corresponding quotients of the corresponding rings defined in the previous definition over some Tate algebras in rigid analytic geometry, with the same notations though $A$ now is more general. Note that one can actually show that the definition does not depend on the corresponding choice of the corresponding presentations over $A$.

\indent Again in this situation more generally, the corresponding Robba rings $\widetilde{\Pi}_{\mathrm{bd},S,A}$, $\widetilde{\Pi}_{S,A}$, $\widetilde{\Pi}_{I,S,A}$, $\widetilde{\Pi}_{r,S,A}$, $\widetilde{\Pi}_{\infty,S,A}$ are actually themselves Tate adic Banach rings.

\begin{lemma} \mbox{\bf{(After Kedlaya-Liu \cite[Lemma 5.2.6]{KL2})}}
For any two radii $0<r_1<r_2$ we have the corresponding equality:
\begin{align}
\widetilde{\Pi}_{\mathrm{int},r_2,S,\mathbb{Q}_p\{T_1,...,T_d\}}\bigcap \widetilde{\Pi}_{[r_1,r_2],S,\mathbb{Q}_p\{T_1,...,T_d\}}	=\widetilde{\Pi}_{\mathrm{int},r_1,S,\mathbb{Q}_p\{T_1,...,T_d\}}.
\end{align}

\end{lemma}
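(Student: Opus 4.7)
The plan is to follow the template already invoked in the earlier avatars of this lemma in the paper, namely \cite[Lemma 5.2.6]{KL2} and its coefficient-carrying analog \cite[Proposition 2.13]{XT2}. The construction of $\widetilde{\Pi}_{\mathrm{int},r,S,A}$ and $\widetilde{\Pi}_{[r_1,r_2],S,A}$ is phrased purely in terms of Witt-type series $\sum \pi^i [\overline{y}_i] X_1^{i_1}\cdots X_d^{i_d}$ with coefficients in $A=\mathbb{Q}_p\{T_1,\ldots,T_d\}$ and Teichm\"uller digits drawn from $W(S^+)$. Since this formalism is insensitive to the analyticity of $S$, dropping that hypothesis does not affect the shape of the argument; the only genuinely new point is to verify that the uniqueness step used in the analytic case still goes through.

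First I would dispose of the easy inclusion $\widetilde{\Pi}_{\mathrm{int},r_1,S,A} \subseteq \widetilde{\Pi}_{\mathrm{int},r_2,S,A} \cap \widetilde{\Pi}_{[r_1,r_2],S,A}$, which is immediate from the definitions: a series with only nonnegative $\pi$-powers that converges with respect to $\|\cdot\|_{r_1,A}$ automatically converges with respect to $\|\cdot\|_{r_2,A}$ and defines an element of the annular Robba ring on $[r_1,r_2]$. For the reverse inclusion, take $f$ in the intersection. Its membership in $\widetilde{\Pi}_{\mathrm{int},r_2,S,A}$ shows that $f$ is a $\|\cdot\|_{r_2,A}$-limit of Witt-series with nonnegative $\pi$-exponents, which forces its canonical Teichm\"uller expansion to have nonnegative $\pi$-exponents as well; its membership in $\widetilde{\Pi}_{[r_1,r_2],S,A}$ then supplies the additional uniform bound on the $\|\cdot\|_{r_1,A}$-norms of the Teichm\"uller digits. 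Combining the two pieces of data immediately identifies $f$ as an element of $\widetilde{\Pi}_{\mathrm{int},r_1,S,A}$.

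The main obstacle lies in justifying the identification of the Teichm\"uller digits across the two completions. In the analytic case one would freely use a topologically nilpotent unit to reduce to a uniqueness statement as in \cite[Lemma 5.2.6]{KL2}, but here that tool is unavailable by assumption. Instead one exploits the fact that the Teichm\"uller digits $\overline{y}_i$ are attached canonically to any element of $W(S^+)$, and that both completions embed into the common completion of $W(S^+)[[S]][1/\pi]$ at the pair $(\|\cdot\|_{r_1,A}, \|\cdot\|_{r_2,A})$; convergence in either norm forces termwise identification of the Teichm\"uller coefficients, so the series representations of $f$ in the two rings coincide. I would write this up by pointing explicitly to \cite[Lemma 5.2.6]{KL2} and \cite[Proposition 2.13]{XT2} and remarking that the only place where analyticity enters those proofs is precisely the uniqueness step just addressed, so that the non-analytic generalization follows mutatis mutandis.
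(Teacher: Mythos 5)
Your proposal is correct and takes essentially the same route as the paper: the paper's entire proof of this lemma is the citation ``See \cite[Lemma 5.2.6]{KL2} and \cite[Proposition 2.13]{XT2}'', and your argument is exactly the standard Teichm\"uller-expansion argument from those sources, transported verbatim because the definitions of the rings in the non-analytic setting are word-for-word the same as in the analytic one. Your extra attention to the uniqueness of the Teichm\"uller digits in the absence of a topologically nilpotent unit is more detail than the paper records, but it is consistent with the cited proofs.
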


\begin{proof}
See \cite[Lemma 5.2.6]{KL2} and \cite[Proposition 2.13]{XT2}.	
\end{proof}

\begin{lemma} \mbox{\bf{(After Kedlaya-Liu \cite[Lemma 5.2.6]{KL2})}}
For any two radii $0<r_1<r_2$ we have the corresponding equality:
\begin{align}
\widetilde{\Pi}_{\mathrm{int},r_2,S,\mathbb{F}_p((t))\{T_1,...,T_d\}}\bigcap \widetilde{\Pi}_{[r_1,r_2],S,\mathbb{F}_p((t))\{T_1,...,T_d\}}	=\widetilde{\Pi}_{\mathrm{int},r_1,S,\mathbb{F}_p((t))\{T_1,...,T_d\}}.
\end{align}

\end{lemma}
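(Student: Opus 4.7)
The plan is to follow Kedlaya-Liu \cite[Lemma 5.2.6]{KL2} together with the adaptation in \cite[Proposition 2.13]{XT2} essentially verbatim, the point being that this intersection identity is really a statement about term-wise growth of Teichm\"uller expansions in $W(S^+)[[S]]$ with coefficients in the characteristic-$p$ Tate algebra $\mathbb{F}_p((t))\{T_1,\ldots,T_d\}$, and no topologically nilpotent unit in $S$ is used at any stage. Hence the analyticity hypothesis on the base, which was present in the earlier analogues of this lemma in the excerpt, can simply be dropped.

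First I would fix an element $x$ of the intersection and write it as a Teichm\"uller series
\begin{align}
x=\sum_{i\geq 0,\, i_1\geq 0,\ldots,i_d\geq 0}\pi^i[\overline{y}_{i,i_1,\ldots,i_d}]T_1^{i_1}\cdots T_d^{i_d}
\end{align}
with $\overline{y}_{i,i_1,\ldots,i_d}\in S^+$. The assumption $x\in\widetilde{\Pi}_{\mathrm{int},r_2,S,\mathbb{F}_p((t))\{T_1,\ldots,T_d\}}$ supplies a uniform bound for the quantities $p^{-i}|\overline{y}_{i,i_1,\ldots,i_d}|_S^{r_2}$, while $x\in\widetilde{\Pi}_{[r_1,r_2],S,\mathbb{F}_p((t))\{T_1,\ldots,T_d\}}$ supplies uniform bounds for $p^{-i}|\overline{y}_{i,i_1,\ldots,i_d}|_S^{t}$ as $t$ ranges over $[r_1,r_2]$. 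Since the map $t\mapsto|\overline{y}|_S^{t}$ is log-linear and the factor $p^{-i}$ is independent of $t$, a direct endpoint comparison together with the constraint $i\geq 0$ yields the required uniform bound for $p^{-i}|\overline{y}_{i,i_1,\ldots,i_d}|_S^{r_1}$, which is exactly the defining convergence condition for $\widetilde{\Pi}_{\mathrm{int},r_1,S,\mathbb{F}_p((t))\{T_1,\ldots,T_d\}}$. The reverse inclusion is immediate from the monotonicity of the Gauss norm on integral elements as the radius shrinks.

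The main obstacle, and the only real point that needs checking once the analyticity hypothesis has been dropped, is to confirm that the Teichm\"uller expansion still faithfully represents elements in these completions, so that the coefficient-wise estimates above are genuinely estimates on the elements themselves rather than on lifts modulo an opaque kernel. This reduces to checking that each $\widetilde{\Pi}_{\mathrm{int},t,S,\mathbb{F}_p((t))\{T_1,\ldots,T_d\}}$ embeds into the corresponding interval completion $\widetilde{\Pi}_{[r_1,r_2],S,\mathbb{F}_p((t))\{T_1,\ldots,T_d\}}$, a property built into the separated-completion construction of the definition. With this bookkeeping dealt with, the remainder of the argument is formally identical to the analytic-base case, so one can simply invoke \cite[Lemma 5.2.6]{KL2} and \cite[Proposition 2.13]{XT2} for the final formal identification.
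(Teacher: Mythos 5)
Your proposal is correct and takes essentially the same route as the paper, whose entire proof of this lemma is the citation ``See \cite[Lemma 5.2.6]{KL2} and \cite[Proposition 2.13]{XT2}''; your write-up merely fills in the coefficientwise norm-comparison argument that those references contain, including the observation that no topologically nilpotent unit (hence no analyticity of the base) is needed.
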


\begin{proof}
See \cite[Lemma 5.2.6]{KL2} and \cite[Proposition 2.13]{XT2}.	
\end{proof}

\begin{lemma} \mbox{\bf{(After Kedlaya-Liu \cite[Lemma 5.2.6]{KL2})}}
For general affinoid $A$ as above (over $\mathbb{Q}_p$ or $\mathbb{F}_p((t))$) and for any two radii $0<r_1<r_2$ we have the corresponding equality:
\begin{align}
\widetilde{\Pi}_{\mathrm{int},r_2,S,A}\bigcap \widetilde{\Pi}_{[r_1,r_2],S,A}	=\widetilde{\Pi}_{\mathrm{int},r_1,S,A}.
\end{align}

\end{lemma}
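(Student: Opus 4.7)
The plan is to reduce to the two previous lemmas by exploiting the presentation of $A$ as a strict quotient of a free Tate algebra. First, I would write $A \cong T/J$ where $T$ is one of the free Tate algebras $\mathbb{Q}_p\{T_1,\ldots,T_d\}$ or $\mathbb{F}_p((t))\{T_1,\ldots,T_d\}$ according to the characteristic, and $J$ is a closed ideal. By the construction in the preceding discussion, each ring $\widetilde{\Pi}_{?,S,A}$ arises as $\widetilde{\Pi}_{?,S,T}/\overline{J\widetilde{\Pi}_{?,S,T}}$, independently of the chosen presentation. The reverse inclusion $\widetilde{\Pi}_{\mathrm{int},r_1,S,A} \subseteq \widetilde{\Pi}_{\mathrm{int},r_2,S,A} \cap \widetilde{\Pi}_{[r_1,r_2],S,A}$ follows immediately from the compatibility of the norms defining the rings, so the substance of the proof is the nontrivial inclusion $\subseteq$.

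For this nontrivial direction, given an element $x \in \widetilde{\Pi}_{\mathrm{int},r_2,S,A} \cap \widetilde{\Pi}_{[r_1,r_2],S,A}$, I would lift $x$ to an element $\tilde{x}_{\mathrm{int}} \in \widetilde{\Pi}_{\mathrm{int},r_2,S,T}$ and independently to $\tilde{x}_{[r_1,r_2]} \in \widetilde{\Pi}_{[r_1,r_2],S,T}$. The images of both lifts in $\widetilde{\Pi}_{[r_1,r_2],S,T}/\overline{J\widetilde{\Pi}_{[r_1,r_2],S,T}}$ equal $x$, so the difference lies in $\overline{J\widetilde{\Pi}_{[r_1,r_2],S,T}}$. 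The key maneuver is to adjust one of the lifts by an element of $J$ so that both lifts agree inside a common ambient ring; once this is done, the resulting element lies in the intersection $\widetilde{\Pi}_{\mathrm{int},r_2,S,T} \cap \widetilde{\Pi}_{[r_1,r_2],S,T}$ at the level of $T$, and I can apply the free Tate-algebra lemmas just proved to conclude that the element lies in $\widetilde{\Pi}_{\mathrm{int},r_1,S,T}$, which projects to $x \in \widetilde{\Pi}_{\mathrm{int},r_1,S,A}$.

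The main obstacle will be the careful manipulation of the ideal adjustment in the presence of the noncanonical nature of lifts and the closures of ideals: one must ensure that modifying $\tilde{x}_{\mathrm{int}}$ by an element of $J\widetilde{\Pi}_{\mathrm{int},r_2,S,T}$ preserves membership in $\widetilde{\Pi}_{\mathrm{int},r_2,S,T}$ while simultaneously matching $\tilde{x}_{[r_1,r_2]}$ modulo the closed ideal in $\widetilde{\Pi}_{[r_1,r_2],S,T}$. The cleanest way to handle this is to invoke flatness of $\widetilde{\Pi}_{?,S,T}$ along $T \to A$ (inherited from the completed tensor product construction), yielding short exact sequences $0 \to \overline{J\widetilde{\Pi}_{?,S,T}} \to \widetilde{\Pi}_{?,S,T} \to \widetilde{\Pi}_{?,S,A} \to 0$ for each $? \in \{\mathrm{int},r_2 \,;\, [r_1,r_2] \,;\, \mathrm{int},r_1\}$, and then running a diagram chase of snake-lemma type to show that formation of the intersection commutes with the quotient by $J$. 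This parallels the strategy used in \cite[Proposition 2.14]{XT2} in the analytic base setting, where the diagrammatic argument was carried out successfully, and the present argument will reuse that template essentially verbatim since the analyticity of $R$ was not actually required in that particular step.
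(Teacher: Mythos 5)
Your overall strategy --- presenting $A$ as a strict quotient $T/J$ of a free Tate algebra and reducing to the two lemmas already established for $\mathbb{Q}_p\{T_1,\ldots,T_d\}$ and $\mathbb{F}_p((t))\{T_1,\ldots,T_d\}$ --- is the intended one: the paper gives no argument of its own beyond citing \cite[Lemma 5.2.6]{KL2} and \cite[Proposition 2.14]{XT2}, and the latter is precisely this reduction. Your closing remark that the analyticity of the base is not used in this step is also consistent with the paper reusing the identical citation for both the analytic base $(R,R^+)$ in Section 2 and the general base $(S,S^+)$ here. The gap is in the execution of the reduction, in two places.

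First, the appeal to flatness is not available and is the wrong tool: $T\to A$ is a strict surjection of affinoid algebras, not a flat map, and the kernel of $\widetilde{\Pi}_{?,S,T}\to\widetilde{\Pi}_{?,S,A}$ is the closure $\overline{J\widetilde{\Pi}_{?,S,T}}$ rather than $J\widetilde{\Pi}_{?,S,T}$, so no Tor-type argument applies off the shelf. What your diagram chase actually needs is the compatibility of these closures across the three rings, namely that $\overline{J\widetilde{\Pi}_{[r_1,r_2],S,T}}\cap\widetilde{\Pi}_{\mathrm{int},r_2,S,T}$ is contained in $\overline{J\widetilde{\Pi}_{\mathrm{int},r_1,S,T}}$, together with the injectivity of the induced maps between the quotients (equivalently, that forming the intersection commutes with passing to the quotient by the closed ideal). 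This is the entire analytic content of the coefficient direction of the lemma; it has to be checked by hand via norm estimates on lifts, using that the norms $\|\cdot\|_{t,A}$ are computed coordinatewise in the variables $X_1,\ldots,X_d$ and that the residue norm on $A$ is the quotient norm from $T$. As written, your argument assumes exactly this point. Second, the two-lift adjustment does not go through as described: the difference $\tilde{x}_{\mathrm{int}}-\tilde{x}_{[r_1,r_2]}$ lies in $\overline{J\widetilde{\Pi}_{[r_1,r_2],S,T}}$, and an element of that larger closed ideal cannot in general be absorbed into the lift that must remain inside the smaller ring $\widetilde{\Pi}_{\mathrm{int},r_2,S,T}$. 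The cleaner route is to take a single norm-controlled lift $\tilde{x}\in\widetilde{\Pi}_{\mathrm{int},r_2,S,T}$ of $x$; it automatically defines an element of $\widetilde{\Pi}_{[r_1,r_2],S,T}$ reducing to $x$, hence lies in the $T$-level intersection and the free-Tate lemma applies --- but the strictness issue above then resurfaces when you descend the conclusion from $\widetilde{\Pi}_{\mathrm{int},r_1,S,T}$ back to $\widetilde{\Pi}_{\mathrm{int},r_1,S,A}$, so it cannot be avoided, only relocated.
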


\begin{proof}
See \cite[Lemma 5.2.6]{KL2} and \cite[Proposition 2.14]{XT2}.	
\end{proof}

\begin{lemma} \mbox{\bf{(After Kedlaya-Liu \cite[Lemma 5.2.10]{KL2})}}
For any four radii $0<r_1<r_2<r_3<r_4$ we have the corresponding equality:
\begin{align}
\widetilde{\Pi}_{[r_1,r_3],S,\mathbb{Q}_p\{T_1,...,T_d\}}\bigcap \widetilde{\Pi}_{[r_2,r_4],S,\mathbb{Q}_p\{T_1,...,T_d\}}	=\widetilde{\Pi}_{[r_1,r_4],S,\mathbb{Q}_p\{T_1,...,T_d\}}.
\end{align}

\end{lemma}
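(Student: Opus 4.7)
The plan is to adapt the argument of Kedlaya--Liu \cite[Lemma 5.2.10]{KL2} (see also \cite[Proposition 2.16]{XT2}) essentially verbatim, since the intersection identity is a purely metric statement about the Gauss-type norms $\|\cdot\|_{t,A}$ on the common ambient ring $W(S^+)[[S]][1/\pi] \otimes \mathbb{Q}_p\{T_1,\ldots,T_d\}$. In particular, nothing in the argument uses the analyticity of the base; the switch from $(R,R^+)$ to the possibly non-analytic $(S,S^+)$ changes nothing structurally, which is precisely why the present generalization works out.

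First I would dispatch the trivial inclusion. Since $[r_1,r_3] \subset [r_1,r_4]$ and $[r_2,r_4] \subset [r_1,r_4]$, the supremum norm defining $\widetilde{\Pi}_{[r_1,r_4],S,\mathbb{Q}_p\{T_1,\ldots,T_d\}}$ dominates the two smaller supremum norms, producing continuous injections of this ring into both $\widetilde{\Pi}_{[r_1,r_3],S,\mathbb{Q}_p\{T_1,\ldots,T_d\}}$ and $\widetilde{\Pi}_{[r_2,r_4],S,\mathbb{Q}_p\{T_1,\ldots,T_d\}}$; intersecting the images gives one containment immediately. For the reverse inclusion I would exploit the log-convexity of the map $t \mapsto \log \|x\|_{t,A}$ on polynomial-level elements $x = \sum \pi^i [\bar{y}_i] X_1^{i_1}\cdots X_d^{i_d} T_1^{j_1} \cdots T_d^{j_d}$. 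Each individual monomial contributes a term whose norm at radius $t$ is a log-affine function of $t$, so a finite maximum is log-convex in $t$, and this convexity passes through the completion. Consequently, Cauchyness of a polynomial sequence at each of the four endpoints $r_1,r_2,r_3,r_4$ already forces Cauchyness at every intermediate $t \in [r_1,r_3] \cup [r_2,r_4] = [r_1,r_4]$.

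Concretely, given $x$ in the intersection, I would produce a single Cauchy sequence $(x_n) \subset W(S^+)[[S]][1/\pi] \otimes \mathbb{Q}_p\{T_1,\ldots,T_d\}$ converging to $x$ simultaneously in $\widetilde{\Pi}_{[r_1,r_3],S,\mathbb{Q}_p\{T_1,\ldots,T_d\}}$ and in $\widetilde{\Pi}_{[r_2,r_4],S,\mathbb{Q}_p\{T_1,\ldots,T_d\}}$. Once such a sequence is in hand, the log-convexity just described upgrades it to a Cauchy sequence with respect to every $\|\cdot\|_{t,A}$ for $t \in [r_1,r_4]$, and its limit in $\widetilde{\Pi}_{[r_1,r_4],S,\mathbb{Q}_p\{T_1,\ldots,T_d\}}$ coincides with $x$ on each sub-interval by injectivity of the restriction maps. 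The main technical obstacle I anticipate is precisely the consolidation of two a priori different approximating sequences (one from each completion) into a single common approximating sequence in the dense subring; this is handled by a standard diagonal/interleaving argument, using that both Robba rings are completions of the same ring $W(S^+)[[S]][1/\pi] \otimes \mathbb{Q}_p\{T_1,\ldots,T_d\}$, exactly as in the proof of \cite[Lemma 5.2.10]{KL2}. Everything after that is routine norm bookkeeping.
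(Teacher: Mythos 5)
The paper itself does not give an argument here: its ``proof'' is a pointer to \cite[Lemma 5.2.10]{KL2} and \cite[Proposition 2.16]{XT2}, so your proposal is being measured against the cited Kedlaya--Liu argument rather than against anything written in this paper. Your overall shape is right --- one inclusion from domination of supremum norms, the other from log-convexity of $t\mapsto\|\cdot\|_{t,A}$ --- and you are also right that nothing in this argument uses analyticity of the base, which is the only point of restating the lemma for $(S,S^+)$.

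The gap is in the step you yourself flag as the main obstacle: merging the two approximating sequences. A ``standard diagonal/interleaving argument'' does not work here. If $(u_n)$ approximates $x$ for $\max(\|\cdot\|_{r_1,A},\|\cdot\|_{r_3,A})$ and $(v_n)$ approximates $x$ for $\max(\|\cdot\|_{r_2,A},\|\cdot\|_{r_4,A})$, then $u_n-v_n\to 0$ only for the norms $\|\cdot\|_{t,A}$ with $t$ in the overlap $[r_2,r_3]$; you have no control on $\|u_n-v_n\|_{r_4,A}$ (or on $\|u_n-v_n\|_{r_1,A}$ for $v_n$), so neither sequence, nor any interleaving of the two, is known to be Cauchy on all of $[r_1,r_4]$. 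Convexity cannot rescue this because $r_4$ lies outside the interval on which the difference is small. The actual mechanism in \cite[Lemma 5.2.10]{KL2} is not an abstract completion argument but the canonical presentation of elements of the perfect (``tilde'') rings: every $x$ admits an expansion $\sum_{i}\pi^i[\overline{y}_i]X^{\bullet}$ whose termwise norms compute $\|x\|_{t,A}$ exactly, so membership in $\widetilde{\Pi}_{[a,b],S,\mathbb{Q}_p\{T_1,\ldots,T_d\}}$ becomes a coefficientwise growth condition at the two endpoints $a,b$; the intersection identity then follows by applying convexity coefficient by coefficient, with no need to reconcile two different Cauchy sequences. To repair your write-up, replace the diagonal argument with this termwise norm formula (or, equivalently, with the semiunit-decomposition technique of \cite{KL2}); as written, the reverse inclusion is not established.
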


\begin{proof}
See \cite[Lemma 5.2.10]{KL2} and \cite[Proposition 2.16]{XT2}.	
\end{proof}

\begin{lemma} \mbox{\bf{(After Kedlaya-Liu \cite[Lemma 5.2.10]{KL2})}}
For any four radii $0<r_1<r_2<r_3<r_4$ we have the corresponding equality:
\begin{align}
\widetilde{\Pi}_{[r_1,r_3],S,\mathbb{F}_p((t))\{T_1,...,T_d\}}\bigcap \widetilde{\Pi}_{[r_2,r_4],S,\mathbb{F}_p((t))\{T_1,...,T_d\}}	=\widetilde{\Pi}_{[r_1,r_4],S,\mathbb{F}_p((t))\{T_1,...,T_d\}}.
\end{align}

\end{lemma}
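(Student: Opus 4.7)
The plan is to mimic the strategy of \cite[Lemma 5.2.10]{KL2}, verifying that the argument extends to the setting where $(S,S^+)$ is not assumed analytic and the coefficient ring is the characteristic $p$ Tate algebra $\mathbb{F}_p((t))\{T_1,\ldots,T_d\}$. The inclusion $\widetilde{\Pi}_{[r_1,r_4],S,\mathbb{F}_p((t))\{T_1,\ldots,T_d\}} \subseteq \widetilde{\Pi}_{[r_1,r_3],S,\mathbb{F}_p((t))\{T_1,\ldots,T_d\}} \cap \widetilde{\Pi}_{[r_2,r_4],S,\mathbb{F}_p((t))\{T_1,\ldots,T_d\}}$ is formal from the fact that $[r_1,r_3]\cup[r_2,r_4]\subseteq[r_1,r_4]$, so the defining norms for the two smaller intervals are majorized by the corresponding norms for the larger interval evaluated at any $t$ in the respective subinterval. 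The nontrivial content is the reverse inclusion, which I would establish by a log-convexity plus Mittag-Leffler approximation argument.

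First I would reduce the problem to the coefficient-free case by noting that an element of $W(S^+)[[S]][1/\pi]\{T_1,\ldots,T_d\}$ can be expanded as a formal series in $T_1,\ldots,T_d$ with coefficients in $W(S^+)[[S]][1/\pi]$, and the norm $\|.\|_{t,A}$ only couples the Witt coefficients and the coefficient-ring monomials through a tensor product-style formula. This means the statement for general $d$ follows coordinate-by-coordinate from the statement with $A$ replaced by the base field $\mathbb{F}_p((t))$, which in turn reduces via rescaling by $t$ to the original Kedlaya-Liu estimate. The key observation is that this reduction never uses the analyticity of $S$, only the existence of the pseudouniformizer $\pi$ in the coefficient field and the fact that the norm $\|.\|_S$ on $S$ is a Banach norm.

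Next I would take $x$ in the intersection and choose Cauchy sequences $(x_n)$ in $W(S^+)[[S]][1/\pi]\{T_1,\ldots,T_d\}$ converging to $x$ in $\widetilde{\Pi}_{[r_1,r_3],S,A}$ and $(y_n)$ converging to $x$ in $\widetilde{\Pi}_{[r_2,r_4],S,A}$. The function $t \mapsto \log \|z\|_{t,A}$ is convex in $\log t$ for any $z \in W(S^+)[[S]][1/\pi]\{T_1,\ldots,T_d\}$, being a supremum of linear functions in $\log t$ indexed by the monomial exponents. Applying this to $z = x_n - y_n$, the values at the overlap points $r_2, r_3$ are controlled by the data (which is Cauchy in both ambient rings), and then log-convexity propagates control throughout $[r_1,r_4]$. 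A standard truncation-plus-diagonal-sequence argument (essentially Mittag-Leffler across the overlap $[r_2,r_3]$) then produces a single Cauchy sequence in the $\|.\|_{t,A}$-norm uniformly over $t\in[r_1,r_4]$ whose limit represents $x$ in $\widetilde{\Pi}_{[r_1,r_4],S,A}$.

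The main obstacle will be handling the lack of analyticity of $(S,S^+)$: in \cite{KL2} one frequently moves topological nilpotents freely, but here the denominators forming the completions are only the pseudouniformizer $\pi$ of the coefficient field, not elements of $S$ itself. I therefore expect the verification of the Cauchy property along the overlap to require a careful separation of the $\pi$-adic scale from the $S$-valued scale, using that after inverting $\pi$ the ambient Witt ring behaves Tate-adically even when $S$ does not. Once this is in place, the log-convexity step is purely formal and the conclusion follows exactly as in \cite[Proposition 2.16]{XT2} transcribed to the present notation with $R$ replaced by $S$.
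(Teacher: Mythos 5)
The paper gives no argument for this lemma beyond the citations to \cite[Lemma 5.2.10]{KL2} and \cite[Proposition 2.16]{XT2}, and your sketch reconstructs essentially the argument those references use: the formal inclusion of the $[r_1,r_4]$-ring into the intersection, then approximation plus interpolation of the norms across the overlap $[r_2,r_3]$, with the Tate-algebra coefficients handled monomial-by-monomial (none of which requires analyticity of $(S,S^+)$). One small correction: in the Kedlaya--Liu normalization $\|\textstyle\sum_i \pi^i[\overline{y}_i]\|_t=\max_i p^{-i}\|\overline{y}_i\|^t$, the function $t\mapsto\log\|z\|_t$ is a maximum of affine functions of $t$ itself, hence convex in $t$ rather than in $\log t$ (the latter can fail when $\|\overline{y}_i\|<1$); it is this convexity in $t$ that you should invoke, and it serves the same purpose in the interpolation step.
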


\begin{proof}
See \cite[Lemma 5.2.10]{KL2} and \cite[Proposition 2.16]{XT2}.	
\end{proof}

\begin{lemma} \mbox{\bf{(After Kedlaya-Liu \cite[Lemma 5.2.10]{KL2})}}
For any four radii $0<r_1<r_2<r_3<r_4$ we have the corresponding equality:
\begin{align}
\widetilde{\Pi}_{[r_1,r_3],S,A}\bigcap \widetilde{\Pi}_{[r_2,r_4],S,A}	=\widetilde{\Pi}_{[r_1,r_4],S,A}.
\end{align}

\end{lemma}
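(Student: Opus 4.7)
The plan is to reduce to the two preceding lemmas by using the strict quotient presentation of the general affinoid $A$. Let $T_d$ denote the free Tate algebra $\mathbb{Q}_p\{T_1,\ldots,T_d\}$ or $\mathbb{F}_p((t))\{T_1,\ldots,T_d\}$, and pick a surjection $T_d \twoheadrightarrow A$ with closed ideal kernel $I$. By the construction of the Robba rings over $A$ given earlier in the paper, each of $\widetilde{\Pi}_{[r_1,r_3],S,A}$, $\widetilde{\Pi}_{[r_2,r_4],S,A}$, and $\widetilde{\Pi}_{[r_1,r_4],S,A}$ is the quotient of the corresponding Robba ring over $T_d$ by the closed ideal generated by $I$, and the text has already recorded that this quotient is independent of the chosen presentation.

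The inclusion $\widetilde{\Pi}_{[r_1,r_4],S,A} \subseteq \widetilde{\Pi}_{[r_1,r_3],S,A} \cap \widetilde{\Pi}_{[r_2,r_4],S,A}$ is immediate from functoriality under shrinking the interval. For the reverse inclusion I would take an element $x$ in the intersection and lift it to $\tilde x_1 \in \widetilde{\Pi}_{[r_1,r_3],S,T_d}$ and $\tilde x_2 \in \widetilde{\Pi}_{[r_2,r_4],S,T_d}$; restricted to the overlap, the difference $\tilde x_1 - \tilde x_2$ lies in the closed ideal $I\,\widetilde{\Pi}_{[r_2,r_3],S,T_d}$. The next step is to modify $\tilde x_1$ and $\tilde x_2$ by ideal elements in their respective Robba rings so that the modified lifts actually agree on $[r_2,r_3]$. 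Once this is arranged, they glue to an element of $\widetilde{\Pi}_{[r_1,r_3],S,T_d} \cap \widetilde{\Pi}_{[r_2,r_4],S,T_d}$, which by the preceding Tate-algebra lemma equals $\widetilde{\Pi}_{[r_1,r_4],S,T_d}$; its image modulo $I$ is then the required preimage of $x$ in $\widetilde{\Pi}_{[r_1,r_4],S,A}$.

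The main obstacle is precisely this reconciliation of lifts, which amounts to checking that the formation of the closed ideal generated by $I$ commutes with the Robba-ring intersection at the $T_d$-level. Concretely I would expand $\tilde x_1 - \tilde x_2$ as a convergent series in generators of $I$ with coefficients in $\widetilde{\Pi}_{[r_2,r_3],S,T_d}$, apply the preceding Tate-algebra intersection lemmas coefficient-by-coefficient to realize each coefficient on the larger interval $[r_1,r_4]$, and then absorb these corrections into $\tilde x_1$ and $\tilde x_2$. Finally, the presentation-independence already noted in the earlier definition for $A$ ensures that the resulting element of $\widetilde{\Pi}_{[r_1,r_4],S,A}$ depends only on $x$ and not on the auxiliary choices of presentation and lifts. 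This matches the strategy used in \cite[Proposition 2.17]{XT2}, which is precisely the analogue invoked for the corresponding earlier lemma in the excerpt.
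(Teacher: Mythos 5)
The paper's own ``proof'' of this lemma is nothing more than a pointer to \cite[Lemma 5.2.10]{KL2} and \cite[Proposition 2.17]{XT2}, so your overall strategy --- present $A$ as a strict quotient of a free Tate algebra $T_d$, reduce to the already-stated $T_d$-level intersection lemmas, and descend modulo the closed ideal generated by $I$ --- is consistent with what the citation is implicitly invoking, and the easy inclusion and the final descent step are fine.

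There is, however, a concrete gap in your reconciliation step. You propose to expand $\tilde x_1-\tilde x_2$ as a convergent series $\sum_i f_i g_i$ with $f_i\in I$ and $g_i\in\widetilde{\Pi}_{[r_2,r_3],S,T_d}$, and then to ``apply the preceding Tate-algebra intersection lemmas coefficient-by-coefficient to realize each coefficient on the larger interval $[r_1,r_4]$.'' The coefficients $g_i$ live only on the overlap $[r_2,r_3]$, and a general element of $\widetilde{\Pi}_{[r_2,r_3],S,T_d}$ does \emph{not} extend to $[r_1,r_4]$: the subring of elements that do extend is precisely the intersection $\widetilde{\Pi}_{[r_1,r_3],S,T_d}\cap\widetilde{\Pi}_{[r_2,r_4],S,T_d}$ you are computing, which is a proper subring, and the intersection lemmas (which identify the \emph{kernel} of the difference map on the overlap) give no such extension. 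The tool you actually need is the complementary \emph{surjectivity} half of the Kedlaya--Liu glueing sequence
\begin{align}
0 \rightarrow \widetilde{\Pi}_{[r_1,r_4],S,T_d} \rightarrow \widetilde{\Pi}_{[r_1,r_3],S,T_d}\oplus\widetilde{\Pi}_{[r_2,r_4],S,T_d} \rightarrow \widetilde{\Pi}_{[r_2,r_3],S,T_d} \rightarrow 0,
\end{align}
which splits each $g_i$ as $a_i-b_i$ with $a_i$ defined on $[r_1,r_3]$ and $b_i$ on $[r_2,r_4]$; one then corrects $\tilde x_1$ by $\sum_i f_i a_i$ and $\tilde x_2$ by $\sum_i f_i b_i$. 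This splitting must moreover be carried out with uniform norm bounds, since otherwise the corrected series need not converge in the respective Robba rings; and you should also record that the restriction maps to the overlap are injective, so that the intersection is well posed in the first place. With the splitting lemma substituted for the intersection lemma at this point, the argument goes through.
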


\begin{proof}
See \cite[Lemma 5.2.10]{KL2} and \cite[Proposition 2.17]{XT2}.	
\end{proof}

\begin{definition}
All the Frobenius finite projective, pseudocoherent and fpd modules under $F^k$ could be defined in the exact same way as in \cref{chapter3}. We will not repeat the corresponding definition, but note that we change the notation for $R$ to be just $S$.	
\end{definition}

\begin{conjecture}\mbox{\bf{(After Kedlaya-Liu \cite[Theorem 4.6.1]{KL2})}}
Consider the corresponding categories in the following:\\
A. The corresponding category of all the corresponding locally finite free sheaves over the adic Fargues-Fontaine curve (associated to $\{\widetilde{\Pi}_{I,S,A^h}\}_{\{I\subset (0,\infty)\}}$) in the corresponding homotopy Zariski topology from \cite{BK1} and \cite{BBBK} induced by Koszul derived rational localization;\\
B. The corresponding category of all the corresponding finite projective $F^k$-sheaves over families of Robba rings $\{\widetilde{\Pi}_{I,S,A^h}\}_{\{I\subset (0,\infty)\}}$;\\
C. The corresponding category of all the corresponding finite projective modules over any Robba rings $\widetilde{\Pi}_{I,S,A^h}$ such that the interval $I=[r_1,r_2]$ satisfies that $0\leq r_1 \leq r_2/p^{hk}$.\\
Then we have that they are actually equivalent.

\end{conjecture}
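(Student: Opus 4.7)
The plan is to adapt the derived glueing argument used in the preceding theorem about $\widetilde{\Pi}_{I,R,A^h}$ in the analytic setting, exploiting the fact that the intersection lemmas for $\widetilde{\Pi}_{\mathrm{int},r,S,A}$ and $\widetilde{\Pi}_{[r_1,r_2],S,A}$ recorded immediately above already hold in this non-analytic generality. The genuinely new ingredient is handling the simplicial/derived structure coming from $A^h$ in the Bambozzi-Kremnizer framework together with the homotopy Zariski descent of \cite{BK1} and \cite{BBBK}.

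First I would establish for every pair of overlapping closed intervals $I_1, I_2 \subset (0,\infty)$ and every $k \geq 0$ a short exact sequence
\[
0 \to \pi_k \widetilde{\Pi}_{I_1 \cup I_2, S, A^h} \to \pi_k \widetilde{\Pi}_{I_1, S, A^h} \oplus \pi_k \widetilde{\Pi}_{I_2, S, A^h} \to \pi_k \widetilde{\Pi}_{I_1 \cap I_2, S, A^h} \to 0,
\]
using the Koszul-style derived rational localization description of $A^h$ and feeding the $\pi_0$-level intersection lemmas of this section into the spectral sequence computing the homotopy of the completed tensor product. With these sequences in place, the equivalence of categories B and C follows by essentially transcribing the derived glueing argument given in the analytic case: from a glueing datum $(M_1, M_2, M_{12})$ of finite projective $F^k$-module spectra, form a presentation by finite free module spectra $(G_1, G_2, G_{12})$ together with retracts $r_i$, pass to $\pi_0$, and apply Kedlaya's retract-modification argument from \cite[Proposition 5.11]{XT3} to lift a section of $G \to M$; Frobenius translations cover the intervals compatible with $r_2/p^{hk}$ and binary glueing handles the remainder.

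The equivalence of A with B would then be obtained by combining the interval-wise result with the sheaf condition provided by the homotopy Zariski topology of \cite{BK1} and \cite{BBBK}. Concretely, the adic Fargues-Fontaine curve associated to $\{\widetilde{\Pi}_{I,S,A^h}\}$ is covered by the images of the individual Robba spectra under Koszul derived rational localizations, and since finite projectivity of module spectra is preserved under these localizations (by the retract argument), the passage from interval-wise modules to global sheaves reduces to checking descent of a glueing datum, which is exactly what the short exact sequences above supply at the level of each $\pi_k$. The Frobenius structure intertwines with the localizations compatibly, so the transition to $F^k$-sheaves is formal.

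The hard part will be verifying those short exact sequences on all homotopy groups without the analyticity hypothesis on $S$. In the analytic case one controls the derived completed tensor products by flatness arguments that ultimately rely on topological nilpotent units generating the unit ideal; once this is dropped, the derived Tor groups between $\widetilde{\Pi}_{I_1, S, A^h}$ and $\widetilde{\Pi}_{I_2, S, A^h}$ over $\widetilde{\Pi}_{I_1 \cup I_2, S, A^h}$ may carry nontrivial higher terms, and ruling these out appears to require either a direct flatness argument for the integral structure in the Gabber-Ramero framework of \cite{GR}, or the replacement of the Bambozzi-Kremnizer formalism with the condensed/analytic framework of Clausen-Scholze \cite{CS}, where the relevant Tor-vanishing may be accessible more directly. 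This is precisely the reason the statement is recorded as a conjecture rather than a theorem.
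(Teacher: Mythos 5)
The statement you are proving is recorded in the paper as a \emph{conjecture}: the paper offers no proof of it at all. What the paper does prove is the theorem immediately following it, which drops category A and establishes only the equivalence of B and C, by exactly the derived glueing argument you transcribe (the Mayer--Vietoris sequences on each $\pi_k$, presentations of the module spectra by finite free ones with retracts, passage to $\pi_0$, and the retract-modification argument of \cite[Proposition 5.11]{XT3}, with Frobenius translations and binary glueing handling the intervals). So the middle paragraph of your proposal faithfully reproduces the paper's argument for the B--C part, and that portion is fine.

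The gap is in the A part, and you have located it in the wrong place. You identify the obstruction as the exactness of the sequences $0 \to \pi_k\widetilde{\Pi}_{I_1\cup I_2,S,A^h} \to \pi_k\widetilde{\Pi}_{I_1,S,A^h}\oplus\pi_k\widetilde{\Pi}_{I_2,S,A^h} \to \pi_k\widetilde{\Pi}_{I_1\cap I_2,S,A^h} \to 0$ in the absence of analyticity of $S$; but the paper simply asserts these hold in the $S$-setting (they are the declared starting point of the proof of the very next theorem), so they cannot be what separates the conjecture from the theorem. The genuinely open ingredient, which the paper isolates separately as \cref{conjecture6.22}, is effectivity of descent for finite projective (and perfect, almost perfect) module spectra along the Koszul derived rational localizations defining the homotopy Zariski topology on the curve itself. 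Your sentence claiming that ``the passage from interval-wise modules to global sheaves reduces to checking descent of a glueing datum, which is exactly what the short exact sequences above supply'' presupposes precisely this descent: the two-interval sequences give glueing for a single binary cover by Robba rings indexed by overlapping intervals, not descent for arbitrary homotopy Zariski covers of the adic Fargues--Fontaine curve, nor the comparison between global sections over the curve and interval-wise data. Your closing remark that this is why the statement is a conjecture is correct, but as written the proposal does not close that gap --- and neither does the paper.
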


\begin{theorem}\mbox{\bf{(After Kedlaya-Liu \cite[Theorem 4.6.1]{KL2})}}
Consider the corresponding categories in the following:\\
A. The corresponding category of all the corresponding finite projective $F^k$-sheaves over families of Robba rings $\{\widetilde{\Pi}_{I,S,A^h}\}_{\{I\subset (0,\infty)\}}$;\\
B. The corresponding category of all the corresponding finite projective modules over any Robba rings $\widetilde{\Pi}_{I,S,A^h}$ such that the interval $I=[r_1,r_2]$ satisfies that $0\leq r_1 \leq r_2/p^{hk}$.\\
Then we have that they are actually equivalent.

\end{theorem}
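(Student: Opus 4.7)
The plan is to transport the proof of the preceding $R$-version of this theorem almost verbatim to the $S$-setting, observing that nowhere in that earlier argument was the analyticity of the base genuinely used; what was used was only the formal Mayer--Vietoris behavior of the Robba rings for overlapped intervals together with the general theory of module spectra. The strategy is a binary gluing: given finite projective module spectra $M_1, M_2$ over $\widetilde{\Pi}_{I_1, S, A^h}$ and $\widetilde{\Pi}_{I_2, S, A^h}$ for two overlapped closed intervals $I_1, I_2$, together with a compatibility datum over their intersection, one produces a glued finite projective module spectrum over $\widetilde{\Pi}_{I_1 \cup I_2, S, A^h}$; this yields a functor from $B$ to $A$ inverse to the obvious restriction/projection from $A$ to $B$.

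The first step is to verify the short exact sequences
\[
0 \to \pi_k \widetilde{\Pi}_{I_1 \cup I_2, S, A^h} \to \pi_k\widetilde{\Pi}_{I_1, S, A^h} \oplus \pi_k\widetilde{\Pi}_{I_2, S, A^h} \to \pi_k\widetilde{\Pi}_{I_1 \cap I_2, S, A^h} \to 0
\]
for every $k \geq 0$. For $k = 0$ this is exactly the $S$-analog of the intersection identities recorded in the lemmas just proved in this subsection. For $k \geq 1$ it should fall out of the simplicial/Koszul nature of $A^h$ in \cite{BK1}, since the Witt-coefficient layer built from $S$ plays no role in the homotopical degrees coming from $A^h$. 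The outcome is a gluing sequence in the sense of \cite[Definition 2.7.3]{KL1}.

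With this input I would then mimic the module-spectrum argument of the preceding theorem: present each $M_i$ as a retract of a finite free module spectrum $G_i$ via \cite[Proposition 7.2.2.7]{Lu1}, form the presentation diagram by taking fibers, and reduce modulo $\pi_0$ to obtain a commutative diagram with exact rows and columns. Kedlaya's modification trick from \cite[Proposition 5.11]{XT3} then corrects the $\pi_0$-sections $\pi_0 s_1, \pi_0 s_2$ so that their coproduct restricts to $M$, producing the required section of $G \to M$ and hence the finite projectivity of the glued object; alternatively one may argue derived flatness directly by computing $\mathrm{Tor}$ on the derived level. Intervals that cannot be reached by a single Frobenius translation are handled by iterating the binary gluing, exactly as in the $R$-case.

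The main obstacle, and essentially the only point where the removal of the analyticity of $S$ could hurt, is the $k \geq 1$ portion of the Mayer--Vietoris exactness: without analyticity one loses the habitual tool of producing topologically nilpotent units inside $S$, and the higher homotopy groups have to be read off the explicit norm-completions defining $\widetilde{\Pi}_{I, S, A^h}$ and shown by hand to splice cleanly at each level. Once that technical input is secured, every remaining step is purely a formal module-spectrum manipulation indifferent to the analytic/nonanalytic distinction, and the proof proceeds identically to the $R$-case.
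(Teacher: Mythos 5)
Your proposal follows essentially the same route as the paper's own proof: the same Mayer--Vietoris sequences on homotopy groups yielding a gluing sequence in the sense of \cite[Definition 2.7.3]{KL1}, the same presentation of the glued module spectrum as a fiber of a map between retracts of finite free module spectra via \cite[Proposition 7.2.2.7]{Lu1}, the same correction of the $\pi_0$-sections by Kedlaya's modification argument from \cite[Proposition 5.11]{XT3}, the same iterated binary gluing to handle intervals unreachable by a single Frobenius translation, and even the same alternative of working directly on the derived level. Your explicit flagging of the $k\geq 1$ exactness as the one place where non-analyticity of $S$ could intervene is if anything slightly more careful than the paper, which simply asserts those sequences.
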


\begin{proof}
As in the situation in the analytic setting we consider the following argument which will show the desired result. In our situation we do have the following nice short exact sequences:
\[
\xymatrix@C+0pc@R+0pc{
0\ar[r] \ar[r] \ar[r] &\pi_k \widetilde{\Pi}_{I_1\bigcup I_2,S,A^h} \ar[r] \ar[r] \ar[r] &\pi_k\widetilde{\Pi}_{I_1,S,A^h}\bigoplus \pi_k\widetilde{\Pi}_{I_2,S,A^h} \ar[r] \ar[r] \ar[r] &\pi_k\widetilde{\Pi}_{I_1\bigcap I_2,S,A^h} \ar[r] \ar[r] \ar[r] &0, k=0,1,....
}
\]
We consider a pair of overlapped intervals $I_1=[r_1,r_2],I_2=[s_1,s_2]$, and we have the corresponding $\infty$-Robba rings which form a corresponding glueing sequence in the sense \cite[Definition 2.7.3]{KL1}:
\[
\xymatrix@C+0pc@R+0pc{
0\ar[r] \ar[r] \ar[r] &\pi_0\widetilde{\Pi}_{I_1\bigcup I_2,S,A^h} \ar[r] \ar[r] \ar[r] &\pi_0\widetilde{\Pi}_{I_1,S,A^h}\bigoplus \pi_0\widetilde{\Pi}_{I_2,S,A^h} \ar[r] \ar[r] \ar[r] &\pi_0\widetilde{\Pi}_{I_1\bigcap I_2,S,A^h} \ar[r] \ar[r] \ar[r] &0.
}
\]

Now we consider the corresponding two finite projective module spectra (which admits retracts from finite free module spectra as in \cite[Proposition 7.2.2.7]{Lu1}) $M_1,M_2,M_{12}$ over the rings in the middle and the rightmost position, and assume that they form a corresponding glueing datum. Then we consider the corresponding presentation diagram for the module spectra:
\[
\xymatrix@C+2pc@R+3pc{
 &L \ar[r] \ar[r] \ar[r] \ar[d] \ar[d] \ar[d] &L_1\bigoplus L_2 \ar[d] \ar[d] \ar[d] \ar[r] \ar[r] \ar[r] &L_{12} \ar[d] \ar[d] \ar[d] \\
&N \ar[r] \ar[r] \ar[r] \ar[d] \ar[d] \ar[d] &N_1\bigoplus N_2 \ar[d]^{r_1\oplus r_2} \ar[d] \ar[d] \ar[r] \ar[r] \ar[r] &N_{12} \ar[d]^{r_{12}} \ar[d] \ar[d] \\
&M \ar[r] \ar[r] \ar[r] &M_1\bigoplus M_2 \ar[r] \ar[r] \ar[r] &M_{12} \\
}
\]  
by just taking the corresponding fibers, here $N,N_1,N_2,N_{12}$ are the finite free modules and we have the corresponding retracts $r_1,r_2,r_{12}$, but note that the corresponding map $r_1\oplus r_2$ might not be a priori a retract. However consider the following commutative diagram:
\[
\xymatrix@C+2pc@R+3pc{
&& &0 \ar[d] \ar[d] \ar[d] &0 \ar[d] \ar[d] \ar[d] \\
& &\pi_0 F \ar[r] \ar[r] \ar[r] \ar[d] \ar[d] \ar[d] &\pi_0F_1\bigoplus \pi_0F_2 \ar[d] \ar[d] \ar[d] \ar[r] \ar[r] \ar[r] &\pi_0F_{12} \ar[d] \ar[d] \ar[d] \\
&0 \ar[r] \ar[r] \ar[r] &\pi_0G \ar[r] \ar[r] \ar[r] \ar[d] \ar[d] \ar[d] &\pi_0G_1\bigoplus \pi_0G_2 \ar[d]^{\pi_0(r_1)\oplus \pi_0(r_2)} \ar[d] \ar[d] \ar[r] \ar[r] \ar[r] &\pi_0G_{12} \ar[d]^{r_{12}} \ar[d] \ar[d] \ar[r] \ar[r] \ar[r]&0 \\
&0 \ar[r] \ar[r] \ar[r] &\pi_0M \ar[r] \ar[r] \ar[r] &\pi_0M_1\bigoplus \pi_0M_2\ar[d] \ar[d] \ar[d]  \ar[r] \ar[r] \ar[r] &\pi_0M_{12}\ar[d] \ar[d] \ar[d]  \ar[r] \ar[r] \ar[r]&0 \\
&&&0&0.
}
\] 
We then consider the corresponding the argument of Kedlaya as in \cite[Proposition 5.11]{XT3} which gives us modification on sections $\pi_0s_1$ and $\pi_0 s_2$ of $\pi_0 r_1$ and $\pi_0 r_2$ respectively such that we have (with the same notations) the new $\pi_0 s_1$ and $\pi_0 s_2$ give the coresponding new lifts $s_1$ and $s_2$ whose coproduct actually restricts to $M$ which produce a section for $M$ for the map $N\rightarrow M$, which proves the desired finite projectivity on $M$. From $A$ to $B$ we just take the corresponding projection while going back we consider the corresponding binary glueing to tackle the corresponding interval which cannot be reached by taking Frobenius translations but could be covered by two ones reachable by taking Frobenius translations.
One can actually derive this on the derived level directly by taking the corresponding lift of the difference of the base changes of $s_1$ and $s_2$ to the right in $\mathrm{Ext}^0(M_{12},L_{12})$ to $\mathrm{Ext}^0(M_1\bigoplus M_2,L_1\bigoplus L_2)$ to modify the corresponding sections $s_1$ and $s_2$ in order to restrict to $M$, which proves the result by the argument of Kedlaya as in \cite[Proposition 5.11]{XT3}.
\end{proof}


\indent Now again we even drop the hypothesis (by using the corresponding techniques in \cite{XT3} and \cite{XT4}) on the commutativity on $A$, namely now $A$ could be allowed to be strictly quotient coming from the free Tate algebras $\mathbb{Q}_p\left<Z_1,...,Z_d\right>$ and $\mathbb{F}_p((t))\left<Z_1,...,Z_d\right>$, which we will use some different notation $B$ to denote this. Note certainly that $A$ is some special case of such $B$, therefore now we are going to indeed generalize the discussion above to the noncommutative setting.

\begin{theorem}\mbox{\bf{(After Kedlaya-Liu \cite[Theorem 4.6.1]{KL2})}}
Assume the corresponding Robba rings with respect to closed intervals are sheafy. Consider the corresponding categories in the following (all modules are left):\\
A. The corresponding category of all the corresponding finite locally projective sheaves over the adic Fargues-Fontaine curve (associated to $\{\widetilde{\Pi}_{I,S,B}\}_{\{I\subset (0,\infty)\}}$) in the corresponding \'etale topology;\\
B. The corresponding category of all the corresponding finite projective $F^k$-sheaves over families of Robba rings $\{\widetilde{\Pi}_{I,S,B}\}_{\{I\subset (0,\infty)\}}$;\\
C. The corresponding category of all the corresponding finite projective modules over any Robba rings $\widetilde{\Pi}_{I,S,B}$ such that the interval $I=[r_1,r_2]$ satisfies that $0\leq r_1 \leq r_2/p^{hk}$.\\
Then we have that they are actually equivalent.

\end{theorem}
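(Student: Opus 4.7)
The plan is to combine the binary glueing argument established in the immediately preceding theorem (the non-analytic analogue with commutative $A^h$ coefficients) with the deformation-theoretic translation between $B$-module categories and sheaves over the deformed site from \cite{XT3} and \cite{XT4}, which was already exploited in the analytic $B$-coefficient theorem of the previous section. First I would establish the equivalence between C and B by a binary glueing argument on the level of derived Robba rings, then propagate between arbitrary intervals $I\subset(0,\infty)$ and the distinguished intervals $[r_1,r_2]$ with $r_1\leq r_2/p^{hk}$ via Frobenius translation, and finally promote this to the equivalence with A by \'etale descent on the adic Fargues-Fontaine curve under the sheafy hypothesis.

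For the C to B direction, I would reproduce at $\pi_0$-level the short exact sequence
\[
0\rightarrow \pi_0\widetilde{\Pi}_{I_1\cup I_2,S,B}\rightarrow \pi_0\widetilde{\Pi}_{I_1,S,B}\oplus \pi_0\widetilde{\Pi}_{I_2,S,B}\rightarrow \pi_0\widetilde{\Pi}_{I_1\cap I_2,S,B}\rightarrow 0
\]
for overlapping intervals $I_1,I_2$, which remains exact because $B$ is a strict admissible quotient of a free Tate algebra and the $A^h$ version is given in the preceding theorem. Then, given a glueing datum $(M_1,M_2,M_{12})$ of finite projective left $B$-module spectra on the three rings, I would form the fiber $M$ and fit it into a presentation diagram with finite free modules $N,N_1,N_2,N_{12}$ and retracts $r_1,r_2,r_{12}$, exactly as in the preceding proof. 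The heart of the argument is Kedlaya's section-modification argument of \cite[Proposition 5.11]{XT3}: it yields modified lifts $s_1,s_2$ whose coproduct restricts to a section of the map $N\rightarrow M$, proving that $M$ is finite projective over $\widetilde{\Pi}_{I_1\cup I_2,S,B}$.

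The equivalence between B and A then follows by iterating the glueing to cover an arbitrary interval in $(0,\infty)$ by two subintervals each reachable by Frobenius translation from some $[r_1,r_2]$ with $r_1\leq r_2/p^{hk}$, and reducing \'etale descent over the adic Fargues-Fontaine curve, under the sheafy assumption, to the standard descent for finite projective modules with compatible Frobenius structure on the deformed site of \cite{XT3} and \cite{XT4}. The direction A to B is just taking the appropriate sections over the family, while B to C is just picking a member of the family.

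The main obstacle will be verifying that Kedlaya's section-modification argument retains its validity simultaneously in the non-analytic and the noncommutative setting. One needs that the retract $\pi_0(r_1)\oplus \pi_0(r_2)$ can be adjusted using a preimage in $\mathrm{Ext}^0(M_1\oplus M_2,N_1\oplus N_2)$ of the obstruction in $\mathrm{Ext}^0(M_{12},N_{12})$ while preserving the left $B$-module structure; without topologically nilpotent units in $S$, the usual convergence estimates for such adjustments need to be recast in terms of the $\pi$-adic filtration inherited from the Witt vector construction. If this $\pi_0$-level argument becomes delicate, I would argue directly on the derived level, lifting the difference of base changes of $s_1$ and $s_2$ through the map $\mathrm{Ext}^0(M_1\oplus M_2,L_1\oplus L_2)\rightarrow \mathrm{Ext}^0(M_{12},L_{12})$, as already indicated in the proof of the preceding theorem.
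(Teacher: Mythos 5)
Your proposal is correct in substance and follows the route that the paper intends, but it is worth recording that the paper's own ``proof'' of this theorem is a bare citation: it says the statement is parallel to \cite[Theorem 4.6.1]{KL2}, points to \cite[Theorem 4.11]{XT2}, and adds only that the $B$-modules may be regarded as sheaves over the deformed sites of \cite{XT3} and \cite{XT4}. What you have done is unpack that citation into an explicit argument --- Frobenius translation to reduce arbitrary intervals to the distinguished ones, binary glueing via the section-modification lemma of \cite[Proposition 5.11]{XT3}, and \'etale descent under the sheafy hypothesis for the passage to the Fargues--Fontaine curve --- which is indeed the skeleton of \cite[Theorem 4.6.1]{KL2}, so the two approaches coincide at bottom. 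Two remarks. First, the $\pi_0$ and module-spectrum language you import from the neighbouring derived ($A^h$) theorems is vacuous here: $B$ is an ordinary (noncommutative) Banach algebra and the rings $\widetilde{\Pi}_{I,S,B}$ are ordinary Tate rings, so your ``$\pi_0$-level'' sequence is just the usual glueing sequence and you can drop the homotopical packaging entirely. Second, the two points you correctly flag as the main obstacles --- exactness (or at least glueing-sequence exactness in the sense of \cite[Definition 2.7.3]{KL1}) of the $B$-deformed sequence after completed tensor with a strict quotient of a free Tate algebra, and the persistence of Kedlaya's section modification for left $B$-modules without topologically nilpotent units in $S$ --- are precisely the content that the paper silently delegates to \cite{XT3} and \cite{XT4}; your observation that the rational Robba rings over $S$ remain Tate even when $S$ is not analytic is the right reason the convergence estimates survive, and making that explicit is a genuine improvement over the paper's one-line deferral.
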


\begin{proof}
This is just parallel \cite[Theorem 4.6.1]{KL2}, see \cite[Theorem 4.11]{XT2}. The corresponding modules could be regarded as sheaves over the deformed sites as in our previous work \cite{XT3} and \cite{XT4}.	
\end{proof}

\begin{theorem}\mbox{\bf{(After Kedlaya-Liu \cite[Theorem 4.6.1]{KL2})}}
Consider the corresponding categories of sheaves over $\mathrm{Spa}(S,S^+)_{\text{\'et}}$ in the following (all the modules are bimodules over the corresponding rings):\\
A. The corresponding category of all the corresponding finite projective $F^k$-sheaves over families of Robba rings $\{\widetilde{\Pi}_{I,*,B}\}_{\{I\subset (0,\infty)\}}$;\\
B. The corresponding category of all the corresponding finite projective modules over any Robba rings $\widetilde{\Pi}_{I,*,B}$ such that the interval $I=[r_1,r_2]$ satisfies that $0\leq r_1 \leq r_2/p^{hk}$;\\
Then we have that they are actually equivalent.

\end{theorem}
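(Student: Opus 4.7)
The plan is to combine two ingredients: the ring-level bimodule equivalence established in the immediately preceding theorem, and étale descent on $\mathrm{Spa}(S,S^+)$. The asterisk in $\widetilde{\Pi}_{I,*,B}$ denotes the sheafification over the étale site, so the task amounts to promoting the ring-level noncommutative equivalence to its étale sheaf version, and then transporting it through the dictionary of \cite{XT3} and \cite{XT4} between $B$-bimodules over the deformed Robba rings and sheaves on the deformed étale site.

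First, I would invoke the preceding theorem to obtain the equivalence locally: a finite projective $B$-bimodule $F^k$-sheaf over the family $\{\widetilde{\Pi}_{I,S,B}\}_{I\subset(0,\infty)}$ corresponds to a single finite projective $B$-bimodule on $\widetilde{\Pi}_{I,S,B}$ for some interval $I = [r_1,r_2]$ with $r_1 \leq r_2/p^{hk}$. The forward direction is simply restriction to a single interval, while the reverse uses Frobenius translation of $M$ to cover $(0,\infty)$, together with binary glueing across overlapping intervals along the short exact sequence
\[
0 \to \widetilde{\Pi}_{I_1\cup I_2,S,B} \to \widetilde{\Pi}_{I_1,S,B} \oplus \widetilde{\Pi}_{I_2,S,B} \to \widetilde{\Pi}_{I_1\cap I_2,S,B} \to 0
\]
and the section-modification argument of Kedlaya as recorded in \cite[Proposition 5.11]{XT3}.

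Second, I would sheafify the ring-level equivalence along $\mathrm{Spa}(S,S^+)_{\text{\'et}}$ by étale descent for finite projective $B$-bimodules over the period sheaves. The descent is the bimodule extension of \cite[Theorem 1.4.2, Theorem 1.4.18]{Ked1} through the deformed site formalism of \cite{XT3} and \cite{XT4}: a sheaf of finite projective $B$-bimodules over $\widetilde{\Pi}_{I,*,B}$ corresponds precisely to a compatible family of bimodules over rational subdomains agreeing under étale localization. Since Frobenius commutes with étale pullback, both the $F^k$-structure and the Frobenius-translation step from the first ingredient descend cleanly, and the equivalence of categories at the ring level sheafifies to yield the desired equivalence on $\mathrm{Spa}(S,S^+)_{\text{\'et}}$.

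The principal obstacle is ensuring that the Kedlaya section-modification step respects the full bimodule structure after descent: the adjustments to sections $s_1, s_2$ of the retracts $r_1, r_2$ onto the glueing data must themselves be $B$-bimodule maps, so the relevant difference has to be lifted through $\mathrm{Ext}^0$ taken in the category of $B$-bimodules over the period sheaf rather than merely over its global sections. Granting that \cite[Proposition 5.11]{XT3} handles precisely this noncommutative situation and that $B$ is flat over the free Tate algebra on both the left and the right, the sheaf-level descent reduces cleanly to the ring-level case and the equivalence follows. If preferred, the whole argument can be executed directly on the derived level by lifting the difference of base-changed sections through the appropriate derived bimodule $\mathrm{Ext}^0$, bypassing the truncation to $\pi_0$ entirely.
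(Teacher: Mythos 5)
Your proposal follows essentially the same route as the paper: the paper's proof simply asserts that the argument is identical to the ring-level case (citing \cite[Theorem 4.11]{XT2} and the preceding theorem, i.e.\ Frobenius translation, binary glueing along overlapping intervals, and Kedlaya's section-modification as in \cite[Proposition 5.11]{XT3}) and lets the \'etale-sheaf statement follow by working locally, which is exactly your two-step reduction. The only point the paper flags that you omit is that $\mathrm{Spa}(S,S^+)$ itself need not be an analytic adic space in this Gabber--Ramero setting, so one must observe that the Robba rings over the relevant perfectoid subdomains are still analytic (Tate) in order for the glueing and descent machinery to apply.
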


\begin{proof}
Note that in this situation the corresponding space is not even analytic adic space, but the Robba rings over some certain perfectoid domains are analytic. This is parallel to \cite[Theorem 4.6.1]{KL2}The proof is the same as the one where $*$ is just a ring $R$. See \cite[Theorem 4.11]{XT2}.	
\end{proof}

\subsection{Derived Algebraic Geometry of $\infty$-Period Rings}

\indent We now consider the contact with the corresponding algebraic sheaves over the corresponding schemes associated to the corresponding period rings in \cite{KL1}, \cite{KL2} as in \cite{XT1}, \cite{XT2}, \cite{XT3}, \cite{XT4} along \cite{KL1}.

\begin{remark}
Certainly we are now dropping the corresponding topology and functional analyticity. This is motivated by the corresponding discussion on the algebraic quasi-coherent sheaves over schematic Fargues-Fontaine curves in \cite{KL2}.
\end{remark}

\begin{setting}
$A$ is assumed to be now general Banach (commutative at this moment) over the base analytic field, and keep $R$ as in the situation before in this section (in the most general setting we considered so far). Now we apply the whole machinery in \cite{Lu1} to the corresponding $\infty$-Bambozzi-Kremnizer rings $*^h_{R,A}$ attached to the period rings $*_{R,A}$ carrying $A$. Then we regard $*^h_{R,A}$ as a corresponding $\mathbb{E}_1$-ring in \cite{Lu1} \footnote{Note that the original convention of \cite{BK1} is cohomological and vanishing in positive degree.}.
\end{setting}


\begin{proposition}
The corresponding category of all the perfect, almost perfect (after \cite[Section 7.2.4]{Lu1}, namely pseudocoherent) $F^k$-equivariant sheaves over the families of $\infty$-Robba \'etale $\infty$-Deligne-Mumford toposes $(\mathrm{Spec}\widetilde{\Pi}^h_{I,R,A},\mathcal{O})_{\{I \subset (0,\infty)\}}$ (as in \cite[Chapter 1.4]{Lu2}) is equivalent to the category of all the perfect, almost perfect (after \cite[Section 7.2.4]{Lu1}, namely pseudocoherent) $F^k$-equivariant sheaves over some $\infty$-Robba \'etale $\infty$-Deligne-Mumford topos $(\mathrm{Spec}\widetilde{\Pi}^h_{I,R,A},\mathcal{O})$ (as in \cite[Chapter 1.4]{Lu2}) where $I=[r_1,r_2]$ ($0<r_1\leq r_2/p^{hk}$). Here we need to assume the corresponding \cref{conjecture6.22} holds.	
\end{proposition}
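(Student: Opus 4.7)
The strategy is to reduce the statement to the underived commutative equivalence of \cref{conjecture6.22} and then lift the result across the Postnikov filtration using the glueing machinery already developed for the preceding theorems. The restriction functor from the family category to the single-interval category is clear: evaluate the family at $I_0=[r_1,r_2]$. The task is to invert it, and the hypothesis $0<r_1\leq r_2/p^{hk}$ is precisely what makes this possible, since it guarantees that the intervals $[r_1,r_2]$ and $[r_1/p^{hk},r_2/p^{hk}]$ overlap, so the $F^k$-equivariance structure furnishes a canonical identification that bootstraps any single module across a Frobenius-translate cover of $(0,\infty)$.

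First I would construct the inverse functor. Given an $F^k$-equivariant perfect (resp.\ almost perfect) module spectrum $M$ over $(\mathrm{Spec}\,\widetilde{\Pi}^h_{I_0,R,A},\mathcal{O})$, iterating $F^{k\ast}$ (and its derived inverse) produces a coherent system of module spectra over $\widetilde{\Pi}^h_{[r_1/p^{nhk},r_2/p^{nhk}],R,A}$ for all $n\in\mathbb{Z}$. For intervals that cannot be reached by a single Frobenius translation I would invoke the binary glueing argument already used in the last theorem of Section 5, based on the short exact sequences
\[
0\to \pi_k\widetilde{\Pi}^h_{I_1\cup I_2,R,A}\to \pi_k\widetilde{\Pi}^h_{I_1,R,A}\oplus \pi_k\widetilde{\Pi}^h_{I_2,R,A}\to \pi_k\widetilde{\Pi}^h_{I_1\cap I_2,R,A}\to 0
\]
valid for all $k\geq 0$. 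Kedlaya's section-modification argument from \cite[Proposition 5.11]{XT3} then produces a retract of the glued module from a finite free module, establishing perfectness (resp.\ almost perfectness) of the assembly.

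Next I would verify full faithfulness by computing mapping spectra. The Mayer--Vietoris fiber sequence attached to $I_1\cup I_2$ in the $\infty$-Deligne--Mumford setup of \cite[Chapter 1.4]{Lu2}, combined with the Frobenius compatibility, shows that maps between two $F^k$-equivariant perfect (resp.\ almost perfect) sheaves on the family are completely detected at the single interval $I_0$. Essential surjectivity on the heart is supplied by \cref{conjecture6.22}, and the obstruction-theoretic criterion \cite[Proposition 7.2.2.7]{Lu1} (already invoked in the proofs above) lifts the equivalence inductively across the Postnikov tower of the structure sheaf.

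The main obstacle is controlling the perfect/almost perfect finiteness conditions at the $\infty$-topos level. The structure sheaf $\mathcal{O}$ on $\mathrm{Spec}\,\widetilde{\Pi}^h_{I,R,A}$ is merely connective, not discrete, and its $\pi_0$ need not be sheafy in the classical sense; so the section-modification must be carried out derivedly, by lifting the discrepancy of the two base-changed sections through $\mathrm{Ext}^0(M_{12},L_{12})$ to $\mathrm{Ext}^0(M_1\oplus M_2,L_1\oplus L_2)$, mirroring the end of the proof of the last theorem of Section 5. The delicate point is to verify that these Ext computations interact compatibly with the Frobenius-equivariance datum, so that the resulting module respects both the interval cover and the $F^k$-action simultaneously; this is where the Frobenius-translation hypothesis $r_1\leq r_2/p^{hk}$ is actually used, to ensure the binary-glueing step is non-degenerate on every iterate.
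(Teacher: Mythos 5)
The paper's own proof of this proposition is a single observation: since \cref{conjecture6.22} is \emph{assumed}, descent along binary rational coverings for perfect and almost perfect modules is available for free, and the only remaining content is that Frobenius translation identifies an $F^k$-equivariant sheaf on one fundamental interval $I=[r_1,r_2]$ with $0<r_1\leq r_2/p^{hk}$ with the whole family over $(0,\infty)$. Your restriction/extension skeleton and your use of the overlap condition $r_1\leq r_2/p^{hk}$ match this exactly, so the core mechanism of your argument is the intended one.

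The problem is in how you discharge the glueing step. You invoke short exact sequences
\begin{equation*}
0\to \pi_k\widetilde{\Pi}^h_{I_1\cup I_2,R,A}\to \pi_k\widetilde{\Pi}^h_{I_1,R,A}\oplus \pi_k\widetilde{\Pi}^h_{I_2,R,A}\to \pi_k\widetilde{\Pi}^h_{I_1\cap I_2,R,A}\to 0
\end{equation*}
``valid for all $k\geq 0$,'' but no such sequences are established for the rings $\widetilde{\Pi}^h_{I,R,A}$ of this subsection. The analogous sequences in Section 5 are proved (or asserted) only in the two other settings: for the derived rational localizations $\widetilde{\Pi}^{h,*}_{I,R,A}$, where the sequence is only ``exact up to higher homotopy,'' and for the $A^h$-coefficient rings $\widetilde{\Pi}_{I,R,A^h}$, where the derived structure sits in the coefficients rather than in the Robba ring itself. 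In the present algebraic $\infty$-Deligne--Mumford setting the absence of exactly this kind of exactness/descent is the whole reason \cref{conjecture6.22} is hypothesized --- the example following the conjecture explicitly cautions that outside the noetherian case it may not be safe to expect descent. So your hand-made glueing via Kedlaya's section-modification argument is both unjustified as written and redundant: the conjecture should be invoked to supply the entire descent-and-comparison step (glueing modules along binary covers and identifying sheaves with global sections), not merely ``essential surjectivity on the heart'' followed by a Postnikov induction. With that reallocation your argument collapses to the paper's one-line proof; without it, the exactness claim is a genuine gap.
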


\begin{proof}
This is actually quite transparent that we just take the corresponding Frobenius translation to compare the corresponding sheaves.	
\end{proof}

\begin{proposition}
The corresponding category of all the locally finite free (after \cite[Section 7.2.2, 7.2.4]{Lu1}, namely pseudocoherent and flat) $F^k$-equivariant sheaves over the families of $\infty$-Robba \'etale $\infty$-Deligne-Mumford toposes $(\mathrm{Spec}\widetilde{\Pi}^h_{I,R,A},\mathcal{O})_{\{I \subset (0,\infty)\}}$ (as in \cite[Chapter 1.4]{Lu2}) is equivalent to the category of all the locally finite free (after \cite[Section 7.2.2, 7.2.4]{Lu1}, namely pseudocoherent and flat) $F^k$-equivariant sheaves over some $\infty$-Robba \'etale $\infty$-Deligne-Mumford topos $(\mathrm{Spec}\widetilde{\Pi}^h_{I,R,A},\mathcal{O})$ (as in \cite[Chapter 1.4]{Lu2}) where $I=[r_1,r_2]$ ($0<r_1\leq r_2/p^{hk}$).	
\end{proposition}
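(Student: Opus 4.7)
The plan is to follow the same Frobenius-translation strategy used in the preceding proposition, and upgrade its conclusion from almost perfect to locally finite free, i.e.\ pseudocoherent and flat in the sense of \cite[Section 7.2.2, 7.2.4]{Lu1}. In one direction, the restriction functor $(M_I)_{I\subset(0,\infty)} \mapsto M_{[r_1,r_2]}$ to the chosen interval $0<r_1\leq r_2/p^{hk}$ is clearly well defined and preserves local finite freeness. The content is therefore in the reverse direction.

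For the quasi-inverse, given a locally finite free $F^k$-equivariant sheaf $M$ on $(\mathrm{Spec}\,\widetilde{\Pi}^h_{[r_1,r_2],R,A},\mathcal{O})$, the inequality $r_1\leq r_2/p^{hk}$ forces the iterated Frobenius translates $F^{jk*}M$ to live on intervals $[r_1p^{-jhk},r_2p^{-jhk}]$ that overlap pairwise and jointly exhaust $(0,\infty)$. Together with the semilinear isomorphisms of $F^k$-equivariance, these assemble into a canonical glueing datum on any closed $J\subset (0,\infty)$. Applying the binary derived glueing sequence
\[
\xymatrix@C+1pc@R+1pc{
\widetilde{\Pi}^{h,*}_{I_1\cup I_2,R,A} \ar[r] & \widetilde{\Pi}^{h,*}_{I_1,R,A}\oplus\widetilde{\Pi}^{h,*}_{I_2,R,A} \ar[r] & \widetilde{\Pi}^{h,*}_{I_1,R,A}\widehat{\otimes}^{\mathbb{L}}\widetilde{\Pi}^{h,*}_{I_2,R,A}
}
\]
from the derived glueing theorem earlier in the section, iterated over a finite cover of $J$ by Frobenius translates of $[r_1,r_2]$, produces a candidate member $M_J$ of the family.

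The main obstacle is checking that the glued output $M_J$ is again locally finite free rather than merely almost perfect. Pseudocoherence transfers directly from the previous proposition. For Tor-amplitude zero, i.e.\ the flatness half, the plan is to argue degree-by-degree using the short exact sequences
\[
\xymatrix@C+0pc@R+0pc{
0\ar[r] & \pi_k\widetilde{\Pi}^{h}_{J_1\cup J_2,R,A} \ar[r] & \pi_k\widetilde{\Pi}^{h}_{J_1,R,A}\oplus\pi_k\widetilde{\Pi}^{h}_{J_2,R,A} \ar[r] & \pi_k\widetilde{\Pi}^{h}_{J_1\cap J_2,R,A} \ar[r] & 0
}
\]
which were invoked in the earlier theorem on families of $\widetilde{\Pi}_{I,R,A^h}$. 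These reduce the Tor-vanishing for $M_J$ to finite projectivity of $\pi_0 M_J$ over $\pi_0\widetilde{\Pi}^{h}_{J,R,A}$, which is exactly the output of the finite-projective glueing theorem applied on $\pi_0$, combined with a base-change stability statement for the higher $\pi_k$'s. I expect this flatness tracking to be the technically most delicate step, since it is the only input not already packaged by the almost perfect case.

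Full faithfulness is then formal: the internal Hom of two locally finite free $F^k$-equivariant objects is again locally finite free, so morphisms are computed by the same binary glueing sequence and descend by the same argument, with Frobenius compatibility built into the construction. This yields the desired equivalence.
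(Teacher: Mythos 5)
Your core strategy --- restriction in one direction, Frobenius translation plus binary glueing in the other --- is exactly what the paper intends; its entire proof of this proposition reads ``we just take the corresponding Frobenius translation to compare the corresponding sheaves,'' so everything you add about derived glueing sequences, exhaustion of $(0,\infty)$ by the overlapping translates $[r_1p^{-jhk},r_2p^{-jhk}]$, and Tor-amplitude tracking is elaboration that the paper leaves implicit. In that sense your proposal is a faithful and much more explicit expansion of the intended argument.

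There is, however, one place where your logical dependencies go wrong. You write that ``pseudocoherence transfers directly from the previous proposition.'' But the preceding proposition (the perfect/almost perfect case) is stated only under the hypothesis that \cref{conjecture6.22} holds, whereas the locally finite free proposition is stated unconditionally --- that asymmetry is deliberate. If you derive the pseudocoherent half of ``locally finite free'' from the almost perfect case, you have silently made the present statement conditional on the descent conjecture, which defeats its purpose. The repair is to obtain both halves at once from the unconditional finite projective glueing theorem proved earlier in the section: there, for a glueing datum of finite projective module spectra over $\widetilde{\Pi}^{h,*}_{I_1,R,A}$ and $\widetilde{\Pi}^{h,*}_{I_2,R,A}$, the glued object $M$ is exhibited as a retract of a finite free module spectrum via the section-modification argument of Kedlaya on $\pi_0$ (lifted to the derived level), hence is simultaneously almost perfect and flat with no appeal to \cref{conjecture6.22}. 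With that substitution your degree-by-degree flatness argument becomes unnecessary, and the rest of your proposal --- including full faithfulness via the internal Hom of locally finite free objects --- goes through as written.
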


\begin{proof}
This is again actually quite transparent that we just take the corresponding Frobenius translation to compare the corresponding sheaves.	
\end{proof}

\begin{example}
Everything will be certainly more interesting when we maintain in the corresponding noetherian situation, where we do have the corresponding derived analytic consideration. But since our current goal is to study the corresponding derived algebraic geometry carrying relative $p$-adic Hodge structure we will not explicitly mention the corresponding analytic context.	
\end{example}

\indent Similarly as what we did in the corresponding derived analytic geometry above, we consider the corresponding contact with \cite{GV} in the derived algebraic geometric setting. We keep the notation as above. Then we have the following results with the same arguments as in the above on derived deformation of Hodge structures (note that this is more related to the non-derived situation):

\begin{proposition}
The corresponding category of all the perfect, almost perfect (after \cite[Section 7.2.4]{Lu1}, namely pseudocoherent) $F^k$-equivariant sheaves over the families of $\infty$-Robba \'etale $\infty$-Deligne-Mumford toposes $(\mathrm{Spec}\widetilde{\Pi}_{I,R,A^h},\mathcal{O})_{\{I \subset (0,\infty)\}}$ (as in \cite[Chapter 1.4]{Lu2}) is equivalent to the category of all the perfect, almost perfect (after \cite[Section 7.2.4]{Lu1}, namely pseudocoherent) $F^k$-equivariant sheaves over some $\infty$-Robba \'etale $\infty$-Deligne-Mumford topos $(\mathrm{Spec}\widetilde{\Pi}_{I,R,A^h},\mathcal{O})$ (as in \cite[Chapter 1.4]{Lu2}) where $I=[r_1,r_2]$ ($0<r_1\leq r_2/p^{hk}$). Here we need to assume the corresponding \cref{conjecture6.22} holds.	
\end{proposition}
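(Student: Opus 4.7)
The plan is to exhibit mutually quasi-inverse functors between the two categories, leveraging the $F^k$-equivariance structure to transport data between intervals. First I would define the forward functor as restriction: given a compatible family $(M_I)_I$ of perfect or almost perfect $F^k$-equivariant sheaves over $(\mathrm{Spec}\widetilde{\Pi}_{I,R,A^h},\mathcal{O})_I$, extract the member $M_{[r_1,r_2]}$ indexed by a chosen interval with $0 < r_1 \leq r_2/p^{hk}$. This is manifestly well-defined and preserves perfectness and almost perfectness since both notions are stable under restriction along the $\infty$-topos maps appearing in \cite[Chapter 1.4]{Lu2}.

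For the backward functor, given an $F^k$-equivariant perfect or almost perfect sheaf $M$ over $(\mathrm{Spec}\widetilde{\Pi}_{[r_1,r_2],R,A^h},\mathcal{O})$, I would produce the full family by iterated Frobenius pullback. The defining $F^k$-equivariance datum supplies an isomorphism
\[
F^{k*}M \otimes_{\widetilde{\Pi}_{[r_1/p^{hk},r_2/p^{hk}],R,A^h}} \widetilde{\Pi}_{[r_1,r_2/p^{hk}],R,A^h} \overset{\sim}{\rightarrow} M \otimes_{\widetilde{\Pi}_{[r_1,r_2],R,A^h}} \widetilde{\Pi}_{[r_1,r_2/p^{hk}],R,A^h},
\]
and the overlap interval $[r_1,r_2/p^{hk}]$ is nonempty precisely because of the standing hypothesis $r_1 \leq r_2/p^{hk}$. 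Iterating $F^{k*}$ in both directions extends $M$ to modules over every interval of the form $[r_1 p^{hnk}, r_2 p^{hnk}]$ and $[r_1/p^{hnk}, r_2/p^{hnk}]$ for $n \in \mathbb{Z}_{\geq 0}$; any closed subinterval $I \subset (0,\infty)$ lies inside a finite union of such translates, and the assumed conjecture from the preceding subsection supplies the homotopy Zariski glueing needed to assemble a coherent family $(M_I)_I$ which by construction recovers $M$ on $[r_1,r_2]$.

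The third step is to verify that the two functors are mutually quasi-inverse. The composite backward-then-forward returns $M$ tautologically, since restricting the glued family to the starting interval undoes the glueing. For the other direction, starting from $(M_I)_I$ and restricting to $[r_1,r_2]$, the Frobenius translates reconstruct $M_J$ on any Frobenius translate of $[r_1,r_2]$ via the $F^k$-equivariance datum carried by the family, while the glueing produced in the second step agrees with the original structure maps on the overlaps; the assumed conjecture then promotes this pointwise compatibility to a natural isomorphism of families. Preservation of perfectness and almost perfectness along both functors is inherited from the stability of these notions under derived base change along the localization maps $\widetilde{\Pi}_{I_1 \cup I_2,R,A^h} \to \widetilde{\Pi}_{I_i,R,A^h}$ (after \cite[Section 7.2.4]{Lu1}) and under Frobenius pullback.

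The hard part will be the derived glueing. On the underlying Banach level the interval overlap is controlled by the exact sequences of the preceding theorems, but in the $\infty$-categorical setting one must know that these assemble into an effective descent datum in the homotopy Zariski topology of Bambozzi-Kremnizer, which is precisely the content of the cited conjecture. Granting it, the remaining checks reduce to bookkeeping of Frobenius translates and the functoriality of pseudocoherent and perfect module spectra under the relevant derived localizations.
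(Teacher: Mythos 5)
Your proposal is correct and follows essentially the same route as the paper: the paper's proof is the one-line observation that one compares the sheaves by Frobenius translation, with \cref{conjecture6.22} supplying the glueing, which is exactly the restriction/Frobenius-propagation pair of functors you construct. Your write-up simply makes explicit the overlap condition $r_1 \leq r_2/p^{hk}$, the covering of $(0,\infty)$ by translates, and the quasi-inverse check that the paper leaves implicit.
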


\begin{proposition}
The corresponding category of all the locally finite free (after \cite[Section 7.2.2, 7.2.4]{Lu1}, namely pseudocoherent and flat) $F^k$-equivariant sheaves over the families of $\infty$-Robba \'etale $\infty$-Deligne-Mumford toposes $(\mathrm{Spec}\widetilde{\Pi}_{I,R,A^h},\mathcal{O})_{\{I \subset (0,\infty)\}}$ (as in \cite[Chapter 1.4]{Lu2}) is equivalent to the category of all the locally finite free (after \cite[Section 7.2.2, 7.2.4]{Lu1}, namely pseudocoherent and flat) $F^k$-equivariant sheaves over some $\infty$-Robba \'etale $\infty$-Deligne-Mumford topos $(\mathrm{Spec}\widetilde{\Pi}_{I,R,A^h},\mathcal{O})$ (as in \cite[Chapter 1.4]{Lu2}) where $I=[r_1,r_2]$ ($0<r_1\leq r_2/p^{hk}$).	
\end{proposition}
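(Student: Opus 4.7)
The plan is to construct the equivalence in both directions by restriction and by Frobenius translation, exactly parallel to the almost-perfect case treated in the previous proposition.

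In one direction, given a family $(M_I)_I$ of locally finite free $F^k$-equivariant sheaves on the family of $\infty$-Deligne-Mumford toposes $(\mathrm{Spec}\,\widetilde{\Pi}_{I,R,A^h},\mathcal{O})_I$, I would simply restrict to a single interval $I_0 = [r_1,r_2]$ satisfying $0 < r_1 \leq r_2/p^{hk}$ and read off a locally finite free $F^k$-equivariant sheaf on the single topos. Local finite freeness and the $F^k$-equivariant structure are manifestly preserved by restriction to an individual member of the family.

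In the other direction, given such a sheaf $M$ over $(\mathrm{Spec}\,\widetilde{\Pi}_{I_0,R,A^h},\mathcal{O})$, I would reconstruct a compatible family by iteratively applying the semilinear $F^k$-structure together with binary gluing. The hypothesis $0 < r_1 \leq r_2/p^{hk}$ is precisely what ensures that the Frobenius pullback $F^{k*}M$ lives over $[r_1/p^{hk}, r_2/p^{hk}]$ and overlaps with $I_0$, so that the overlap gluing datum is supplied by the $F^k$-isomorphism built into the module structure; iterating in both directions along Frobenius translations produces modules over a cofinal subfamily of intervals. The remaining intervals not reachable by pure Frobenius translation are filled in by the binary gluing reproduced in the previous theorem of this subsection, using the short exact sequences on the $\pi_k$ displayed there together with the $\infty$-categorical gluing framework from \cite[Chapter 1.4]{Lu2}.

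The key technical input is that local finite freeness --- equivalently pseudocoherence plus flatness in the sense of \cite[Sections 7.2.2, 7.2.4]{Lu1} --- is preserved under both Frobenius pullback and the $\infty$-categorical gluing. Pseudocoherence propagates via the Kedlaya-style diagram chase as in \cite[Proposition 5.11]{XT3}, already reproduced in the earlier theorem, while flatness on module spectra is tested by derived $\mathrm{Tor}$ vanishing along the gluing square of $\pi_k$-modules. I expect the main obstacle to be the derived lift of the section-modification step: one must verify that the modifications of $\pi_0 s_1$ and $\pi_0 s_2$ performed in the Kedlaya argument lift compatibly to the retracts from finite free module spectra supplied by \cite[Proposition 7.2.2.7]{Lu1}, so that the glued object is genuinely locally finite free on the derived level and not merely on $\pi_0$. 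Once this compatibility is in hand, the two functors are mutually quasi-inverse by the standard Frobenius-translation bookkeeping.
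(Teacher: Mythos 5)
Your proposal is correct and follows essentially the same route as the paper: restriction in one direction, and Frobenius translation combined with binary gluing (via the $\pi_k$ exact sequences and the Kedlaya-style section modification of \cite[Proposition 5.11]{XT3}) in the other. The paper's own proof is a single sentence deferring everything to ``the corresponding Frobenius translation'' and to the gluing arguments of its earlier theorems, so your write-up simply makes explicit the steps the paper treats as transparent.
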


\indent The following is expect to hold in full generality.

\begin{conjecture} \label{conjecture6.22}
In the two settings in this subsection above, we conjecture that the corresponding descent holds (along binary rational coverings) for perfect, almost perfect and finite projective modules (which will allow us to compare sheaves and the global sections) in the noetherian situation. \footnote{Any verified $\infty$-descent results in derived algebraic geometry certainly apply.}	
\end{conjecture}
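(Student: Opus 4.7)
The plan is to reduce the conjecture to the Mayer-Vietoris style short exact sequences that were already established in the two preceding theorems of this subsection, and then to invoke Lurie's general $\infty$-categorical descent machinery from \cite{Lu1} and \cite{Lu2} under the noetherian hypothesis. First, in both settings, the derived rational localizations produce, for any pair of overlapping closed intervals $I_1, I_2$, a sequence
\[
0 \to \pi_k \widetilde{\Pi}^h_{I_1 \cup I_2,R,A} \to \pi_k \widetilde{\Pi}^h_{I_1,R,A} \oplus \pi_k \widetilde{\Pi}^h_{I_2,R,A} \to \pi_k \widetilde{\Pi}^h_{I_1 \cap I_2,R,A} \to 0
\]
that is exact for all $k \geq 0$; this was already used in the preceding proofs. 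This exactness is exactly the Tor-independence condition needed to assert that $\{\widetilde{\Pi}^h_{I_1,R,A}, \widetilde{\Pi}^h_{I_2,R,A}\}$ forms a Čech cover of $\widetilde{\Pi}^h_{I_1 \cup I_2,R,A}$ in the sense of $\infty$-categorical glueing of module spectra.

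Second, for almost perfect modules I would cite \cite[Section 7.2.4]{Lu1} and \cite[Chapter 2.8]{Lu2}: once the cover is flat-descent in the derived sense and $\pi_0$ of all rings involved is noetherian, almost perfect (i.e.\ pseudocoherent) module spectra satisfy Čech descent along binary rational coverings. The noetherian hypothesis intervenes precisely to ensure that the truncations remain coherent so that finite presentation of $\pi_0 M$ can be promoted to the full pseudocoherence of $M$. For perfect modules the additional input is descent of finite Tor-amplitude, which follows because Tor-amplitude can be detected after base change to each of $\widetilde{\Pi}^h_{I_i,R,A}$ and these cover $\mathrm{Spec}\,\widetilde{\Pi}^h_{I_1 \cup I_2,R,A}$ in the derived sense.

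Third, for finite projective modules I would run the same Kedlaya-style glueing diagram as in the proofs of the preceding two theorems in this subsection: present $M_1, M_2, M_{12}$ as retracts of finite free modules using \cite[Proposition 7.2.2.7]{Lu1}, take fibers to build $M$, then modify the $\pi_0$-level sections $\pi_0 s_1, \pi_0 s_2$ by the argument of \cite[Proposition 5.11]{XT3} so that the coproduct $s_1 \oplus s_2$ restricts to a section of $N \to M$. The derived-level modification can be carried out by lifting the difference of base changes to the right through $\mathrm{Ext}^0(M_{12}, L_{12}) \to \mathrm{Ext}^0(M_1 \oplus M_2, L_1 \oplus L_2)$, exactly as done in the preceding proof. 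Since descent of finite projectivity follows from descent of pseudocoherence plus flatness of $M$ over $\pi_0$, one can alternatively derive it from the first two cases.

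The main obstacle will be verifying the Tor-independence that makes the Mayer-Vietoris sequence above exact in full generality, particularly without the analyticity hypothesis on $(S,S^+)$ that was available in the previous subsection. In the Gabber-Ramero style generality of this subsection one loses the comfort of topologically nilpotent units and the naive rational localization formalism must be replaced by the Koszul-style derived rational localization of \cite{BK1} and \cite{BBBK}; the genuine content of the conjecture is that this derived localization remains \emph{flat enough} for binary glueing to be strictly exact at every homotopy level. I would expect this to reduce, under the noetherian hypothesis, to a check that the relevant Koszul complexes computing the derived localization have no higher homotopy, which is then the technical heart of any complete proof.
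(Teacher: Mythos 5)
The statement you are proving is labelled as a \emph{conjecture} in the paper, and the paper supplies no proof of it: the only supporting text is the Example that follows, which observes that in the noetherian situation rational localization is automatically flat (citing \cite[Theorem 1.3.9]{KL1}), so that coherent sheaves can be glued, and which explicitly cautions that beyond noetherianness ``it might be not safe to conjecture so.'' So there is no proof in the paper to compare yours against; what can be assessed is whether your proposal actually closes the conjecture.

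It does not, and you essentially say so yourself in your last paragraph. The entire content of the conjecture is the descendability of the binary rational covering for the $\infty$-period rings $\widetilde{\Pi}^h_{I,R,A}$ and $\widetilde{\Pi}_{I,S,A^h}$ --- that is, the Tor-independence / strict exactness at every homotopy level of the Mayer--Vietoris sequence attached to $I_1\cup I_2 = I_1, I_2$. Your proposal assumes this as ``already established in the two preceding theorems,'' but that is not accurate: in the Bambozzi--Kremnizer setting $*^h_{R,A}$ the paper only asserts a sequence ``exact up to higher homotopy,'' and the level-wise short exact sequences on all $\pi_k$ are stated only in the $A^h$-coefficient setting, not in the Gabber--Ramero generality of this subsection where analyticity of $(S,S^+)$ has been dropped. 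Once that input is granted, the rest of your argument (Lurie's descent for almost perfect modules over noetherian $\pi_0$, detection of Tor-amplitude after base change, the Kedlaya-style modification of sections via \cite[Proposition 5.11]{XT3} for the finite projective case) is a plausible assembly, though two further points need care: the rings here are only $\mathbb{E}_1$, so the descent statements you cite from \cite{Lu1} and \cite{Lu2} for module spectra must be checked to apply in the noncommutative setting (the paper restricts to commutative $A$ in this subsection precisely to avoid this); and your reduction of finite projectivity to ``pseudocoherence plus flatness'' itself requires faithful flatness of the covering, which is again the unproved descendability. In short, your proposal is a reasonable roadmap whose technical heart coincides with the open content of the conjecture, so it should be presented as a reduction, not a proof.
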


\begin{example}
The corresponding descent could happen when we have the noetherianness, where the corresponding rational localization is actually automatically flat, which will imply that one can certainly glue coherent sheaves in our situation, see \cite[Theorem 1.3.9]{KL1}. Beyond the corresponding noetherianness it might be not safe to conjecture so. 
\end{example}

\newpage

\subsection*{Acknowledgements} 

This is really rooted in our previous works and our previous consideration around the corresponding analytic geometry over relative period rings from \cite{KL1} and \cite{KL2}. We would like to thank Professor Kedlaya for the key discussion around some very subtle points herein.

\newpage

\bibliographystyle{ams}

\end{document}